\definecolor{env_back}{gray}{0.8}
\definecolor{thm_color}{rgb}{0,0,1}
\definecolor{conj_color}{rgb}{1,0,0}
\definecolor{dfn_color}{cmyk}{0,1,0,0}
\newtheorem{thm}{{\color{thm_color}Theorem}}[section]
\newtheorem{cor}[thm]{{\color{thm_color}Corollary}}
\newtheorem{lem}[thm]{{\color{thm_color}Lemma}}
\newtheorem{prop}[thm]{{\color{thm_color}Proposition}}
\newtheorem{ques}[thm]{Question}
\newtheorem{conj}[thm]{{\color{conj_color}Conjecture}}
\newtheorem*{clm*}{Claim}
\theoremstyle{definition}
\newtheorem{exm1}[thm]{Example}
\theoremstyle{remark}
\newtheorem*{rem*}{Remark}
\newenvironment{lem*}[1]{\vspace{1ex}\noindent
{\bf Lemma* (#1).} [restatement]  \hspace{0.5em} \em }{ }
\newenvironment{thm*}[1]{\vspace{1ex}\noindent
{\bf Theorem* (#1).} [restatement]  \hspace{0.5em} \em }{ }
\newcommand{\IP}[1]{\left\langle #1 \right\rangle}
\newcommand{\iP}[1]{\langle #1 \rangle}
\newcommand{\set}[1]{\left\{#1\right\}}
\newcommand{\sr}[1]{\left(#1\right)}
\newcommand{\Integer}{\mathbb{Z}}
\newcommand{\Z}{\Integer}
\newcommand{\N}{\mathbb{N}}
\newcommand{\R}{\mathbb{R}}
\newcommand{\C}{\mathbb{C}}
\newcommand{\eps}{\varepsilon}
\newcommand{\ie}{{\em i.e.\ }}
\newcommand{\eg}{{\em e.g.\ }}
\DeclareMathOperator{\E}{\mathbb{E}}     
\renewcommand{\Pr}{}
\let\Pr\relax
\DeclareMathOperator{\Pr}{\mathbb{P}}
\newcommand{\1}[1]{\mathbf{1}_{\set{ #1 } }}
\newcommand{\ind}[1]{\mathbf{1}_{ #1}}
\def\squareforqed{\hbox{\rlap{$\sqcap$}$\sqcup$}}
\def\qed{\ifmmode\squareforqed\else{\unskip\nobreak\hfil
\penalty50\hskip1em\null\nobreak\hfil\squareforqed
\parfillskip=0pt\finalhyphendemerits=0\endgraf}\fi}
\newcommand{\ignore}[1]{ }
\newcommand{\dist}{\mathrm{dist}}
\newcommand{\vphi}{\varphi}
\newcommand{\F}{\mathcal{F}}
\newcommand{\Ee}{\mathcal{E}}
\newcommand{\AND}{\qquad \textrm{and} \qquad}
\newcommand{\define}[1]{{\bf #1}}
\newcommand{\mn}{\wedge}
\newcommand{\spn}{\mathit{span}}
\newcommand{\ns}{\triangleleft}
\newcommand{\LHF}{\mathsf{LHF}}
\newcommand{\BHF}{\mathsf{BHF}}
\newcommand{\HF}{\mathsf{HF}}
\newcommand{\dom}{\backslash}
\newcommand{\LL}{\mathcal{L}}
\newcommand{\zz}{\mathbb{Z}}
\newcommand{\GG}{\mathbb{G}}
\newcommand{\FF}{\mathbb{F}}
\newcommand{\HH}{\mathbb{H}}
\newcommand{\RR}{\mathbb{R}}
\renewcommand{\AA}{\mathbb{A}}
\newcommand{\MSep}{\mathcal{MS}}
\newcommand{\clamt}[2]{
\bigl[
\begin{smallmatrix}
#2 & #1 \\  0 & 1
\end{smallmatrix}
\bigr]
}
\begin{document}

\title{Harmonic functions of linear growth on  solvable groups}

\author{Tom Meyerovitch \and Ariel Yadin}
\thanks{$^*$Ben Gurion University of the Negev. email: \texttt{\{mtom, yadina\} @math.bgu.ac.il}}
\thanks{ T.M.  would like to acknowledge funding from the People Programme (Marie Curie Actions) of the European Union's Seventh Framework Programme (FP7/2007-2013) under REA grant agreement no. 333598,
and from the Israel Science Foundation (grant no. 626/14). }

\address{Department of Mathematics, Ben Gurion University of the Negev, Be'er Sheva ISRAEL.}
\email{\{mtom, yadina\} @math.bgu.ac.il}

\maketitle

\begin{abstract}
In this work we study the structure of finitely generated groups for which a space of
harmonic functions with fixed polynomial growth is finite dimensional.  It  is conjectured that such groups must
be virtually nilpotent (the converse direction to Kleiner's theorem).  We prove that this is indeed the case
for solvable groups. 
The investigation  is partly motivated by Kleiner's proof for Gromov's theorem on groups of polynomial growth.
\end{abstract}

\setcounter{tocdepth}{1}
\tableofcontents

\section{Introduction}
\label{scn: intro}

Based on Colding and Minicozzi's solution to Yau's Conjecture \cite{colding_minicozzi},
in 2007 Kleiner proved the following theorem \cite{kleiner_2010}: For any finitely generated group
$\GG$ of polynomial growth, the space of harmonic functions on $\GG$ of some
fixed polynomial growth
is a  finite dimensional vector space.  Using this theorem Kleiner obtained  a non-trivial finite dimensional representation of $\GG$,
 and  discovered a new proof of Gromov's theorem \cite{gromov_81}: Any finitely generated group of polynomial growth has a finite index subgroup
that is nilpotent (\ie is virtually nilpotent).

A natural question along these lines is whether the converse of Kleiner's theorem holds.
That is:

\begin{conj} \label{conj: main conj}
Let $\GG$ be a finitely generated group, and let $\mu$ be a symmetric probability measure
on $\GG$, with finite support that generates $\GG$.
Let $\HF_k(\GG,\mu)$ denote the space of $\mu$-harmonic functions on $\GG$
whose growth is bounded by a degree $k$ polynomial.

Then the following are equivalent:
\begin{enumerate}
\item $\GG$ is virtually nilpotent.
\item $\GG$ has polynomial growth.
\item $\dim \HF_k(\GG,\mu) < \infty$ for all $k$.
\item There exists $k \geq 1$ such that $\dim \HF_k(\GG,\mu) < \infty$.
\end{enumerate}
\end{conj}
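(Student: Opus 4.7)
The implications (1)$\Leftrightarrow$(2) (Gromov's theorem together with the elementary reverse), (2)$\Rightarrow$(3) (Kleiner's theorem), and (3)$\Rightarrow$(4) (tautological) are already in the literature, so the only new content is (4)$\Rightarrow$(1), which we aim to prove under the added hypothesis that $\GG$ is solvable. Given the paper's emphasis on linear growth, I would actually target the stronger statement that $\dim \HF_1(\GG,\mu)<\infty$ already forces $\GG$ to be virtually nilpotent: then the full conjecture for solvable $\GG$ is recovered via the cycle $(4)\Rightarrow(1)\Rightarrow(2)\Rightarrow(3)\Rightarrow(4)$.

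The plan is to argue by contrapositive: assume $\GG$ is solvable but not virtually nilpotent, and produce infinitely many linearly independent $\mu$-harmonic functions of linear growth. First I would invoke the Milnor--Wolf theorem, which gives that $\GG$ has exponential growth, and then use the derived series (or the Fitting/Mal'cev structure of finitely generated solvable groups) to reduce to a finite-index subgroup and a convenient quotient in which there is a normal abelian subgroup $A\ns\GG$ whose conjugation action by $\GG/A$ contains an automorphism with an eigenvalue of modulus $\neq 1$. The model cases to keep in mind are the polycyclic groups $\Z^d\rtimes\Z$ with a hyperbolic integer matrix, the metabelian Baumslag--Solitar group $\Z[\tfrac{1}{m}]\rtimes\Z$, and the lamplighter $\Z_p\wr\Z$.

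The construction of harmonic functions should exploit the dual action of $\GG/A$ on $\hat A$: starting from a non-trivial character $\chi\in\hat A$, non-virtual-nilpotence ensures the $\GG$-orbit $\GG\cdot\chi\subset\hat A$ is infinite, and from this orbit one can build harmonic 1-cocycles $c:\GG\to V$ (satisfying $\sum_h\mu(h)c(h)=0$) into a suitable infinite-dimensional affine-isometric $\GG$-module. Pairing such cocycles against sufficiently many vectors in the dual space yields scalar-valued $\mu$-harmonic functions of linear growth, and the cocycle non-trivially then upgrades to linear independence of an infinite family inside $\HF_1(\GG,\mu)$.

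The main obstacle is twofold. First, the class of finitely generated solvable non-virtually-nilpotent groups carries several essentially different geometries—polycyclic with expanding actions, wreath products, metabelian Baumslag--Solitar groups—and a clean argument must handle all of them simultaneously, most likely by an induction on the derived length after reducing to a canonical minimal form. Second, and more delicate, is \emph{growth control}: the naive construction arising from an eigenvalue of modulus $\neq 1$ tends to produce \emph{exponentially} growing functions, so the averaging/differencing procedure must be designed so that the extracted harmonic functions remain of linear growth while still spanning an infinite-dimensional subspace. This tension between the dynamical structure of the conjugation action on $A$, the cocycle/coboundary formalism, and the strict linear-growth constraint is, in my view, the technical heart of the proof.
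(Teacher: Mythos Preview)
Your contrapositive strategy matches the paper's, but the implementation diverges and your sketch leaves the central difficulty unresolved.

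The paper's reduction is not via Milnor--Wolf and the derived series but via a theorem of Groves (used by Breuillard): any finitely generated solvable group that is not virtually nilpotent has a quotient with a finite-index subgroup embedding in the affine group $\AA(\FF)$ over some field $\FF$. This is sharper than what your reduction produces and places the problem in a concrete $2\times 2$ upper-triangular matrix setting. The construction of harmonic functions is then entirely probabilistic rather than cohomological: writing $\rho(x)=-\log|\lambda(x)|$ and letting $\sigma_r$ be the exit time of the real-valued walk $(\rho(X_t))_t$ from $[-r,r]$, the paper defines
\[
f(x)=\lim_{k\to\infty} r_k\,\Pr_x\big[\,|c(X_{\sigma_{r_k}})|<3\,\big].
\]
Linear growth falls out automatically from a gambler's-ruin estimate $\Pr_x[\cdot]\le C\max\{\rho(x),1\}/r$, and non-constancy together with the infinite-dimensional $\GG$-orbit come from further random-walk lemmas on $\R$ (overshoot bounds, time spent near an endpoint, a separated-visits large-deviation count). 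Scaling a probability by $r$ is exactly the device that sidesteps the exponential-versus-linear tension you flag.

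Your cocycle proposal, by contrast, does not resolve that tension; it only names it. You neither specify the representation nor exhibit the harmonic cocycle, and you explicitly concede that the naive construction from an eigenvalue of modulus $\neq 1$ gives exponential growth. If you pass to a \emph{unitary} representation then cocycles are automatically Lipschitz, but you then owe (i) existence of a non-coboundary $\mu$-harmonic cocycle for your specific group and representation, and (ii) linear independence of infinitely many scalar pairings $g\mapsto\langle c(g),v\rangle$ modulo constants; neither is addressed, and (ii) in particular is delicate since a single cocycle into a separable Hilbert space need not generate an infinite-dimensional space of scalar harmonic functions. If instead you use the non-unitary eigen-representation your reduction hands you, you are back to exponential growth. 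As written this is a reasonable research outline that correctly locates the hard step, but the step the paper devotes Sections~\ref{scn: RW on reals} and~\ref{sec:2x2} to is precisely the one you leave open.
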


Let us consider the space of {\em bounded} harmonic functions (\ie $\HF_0$), which we also denote by $\BHF= \BHF(\GG,\mu)$.
The space $\BHF(\GG,\mu)$ is isomorphic to the space of bounded functions on  the \emph{Poisson-Furstenberg boundary} of $(\GG,\mu)$.
This object has been studied extensively in the literature over the past.
We refer to the seminal paper \cite{kaimanovich_vershik_1983} of Ka{\u\i}manovich and Vershik
also \cite{MR2827814, Furman02} for background and more on this object.
From the theory of Poisson-Furstenberg boundaries it follows that when the Poisson-Furstenberg boundary is not one point, then it must be infinite.  So if the dimension of the bounded harmonic functions on $\GG$ is finite,
then the Poisson-Furstenberg boundary is finite, and thus trivial (a singleton).  Hence, the only bounded harmonic
functions in this case are the constants.  Positive harmonic functions
(Martin boundary, see \eg \cite{Sawyer97})  have also been extensively studied in the  literature.

Recently, there has been growing interest
in the study of spaces of unbounded harmonic functions on groups and other homogeneous spaces
(see \eg \cite{BDKY11} and references therein). As mentioned, Kleiner \cite{kleiner_2010} used the space $\HF_1$
of linearly growing functions to produce a finite dimensional representation for groups of polynomial growth,
which lead to a new proof of Gromov's theorem.

The main result of this paper, is a proof of Conjecture \ref{conj: main conj} in the case where
$\GG$ is a solvable group.
Our proofs and tools are probabilistic utilizing the theory of random walks on groups and martingales.

One consequence of our results is a structure theorem for the space of linearly growing harmonic functions
for general groups where this space is finite dimensional: 
Up to an additive constant and passing to a finite index subgroups any such function must be a homomophism into the additive group $\R$. In a follow-up paper joint with Idan Perl and  Matthew Tointon \cite{arXiv:1505.01175} we provide, along with additional  results, a structure theorem for the space of harmonic functions of polynomial growth (in the finite-dimensional case).

After introducing some notation, we will precisely state
the main contributions of this work in Section \ref{sec: main results}.

{\bf Acknowledgements:}
 The birth of this work was during a research seminar in Ben Gurion University.
 We thank the participants of the this seminar for their part.
  We acknowledge interesting conversations, encouragement and valuable suggestions from
 Amichai Aisenmann, Tsachik Gelander, Yair Glasner, Gady Kozma, Yehuda Shalom, Maud Szusterman  and the anonymous referees.

\subsection{Notation}

Throughout,
$\GG$ is a countable group generated by a finite symmetric set; $\GG = \IP{S},
S = S^{-1}, |S| < \infty$, and $\mu$ is a probability measure on $\GG$.
The generating set induces a metric $\dist_S$ on $\GG$, namely the graph metric of the Cayley graph
with respect to $S$. This metric is invariant to the action of $\GG$ from the left.
We write $|x| = |x|_S  = \dist_S(x,1)$ for $x \in \GG$.
The metrics obtained by different choices of generating sets are bi-Lipshcitz.

The pair $(\GG,\mu)$ is called a \define{measured group}. We will always assume
$\mu$ is a \define{symmetric} probability measure on $\GG$;
\ie $\mu(x) = \mu(x^{-1})$ for all $x \in \GG$, and that
$\mu$ is \define{adapted}: the support of $\mu$ generates $\GG$.
We say that $\mu$ is \define{smooth} if the generating function
$\vphi_\mu(\zeta) := \sum_{x} \mu(x) e^{\zeta |x|} $
has positive radius of convergence.

We say that $\mu$ is  \define{courteous} 
if it is a symmetric adapted and smooth probability measure.

Section \ref{sec:RW_finite_index_subgrp} details a bit the properties of smooth measures
and explains why the class of courteous measures is a natural class of probability measures to work with.

Clearly, any measure $\mu$ with finite support is smooth.
A primary example for a  courteous measure is the measure $\mu = \frac{1}{|S|}\sum_{s \in S}\delta_s$, distributed uniformly over a finite, symmetric generating set $S \subset \GG$.

A \define{$\mu$-harmonic function} $f:\GG \to \C$ is a function such that for all $x \in \GG$,
$f \ast \mu (x) : = \sum_s f(x s^{-1} ) \mu (s) = f(x) $.

A group $\GG$ acts naturally on functions on the group; namely by $xf(y) = f(x^{-1}y)$.

The space $\BHF = \BHF(\GG,\mu)$ of bounded harmonic functions  is a  $\GG$-invariant space; that is $\GG \BHF = \BHF$.

A measured group $(\GG,\mu)$ with the property that all bounded harmonic functions are constant is called \define{Liouville}.
The property of having a finite dimensional space of $\mu$-harmonic functions of linear growth can be viewed as a refinement of the Liouville property. 

We recall the following fact about bounded harmonic functions:
\begin{lem}
\label{lem: finite dimensional implies Liouville}
The only non-trivial finite-dimensional $\GG$-invariant subspace of $\BHF(\GG,\mu)$ is the constant functions;
that is if $V \leq \BHF$ is $\GG$-invariant and $\dim V < \infty$ then $V=\C$.

In particular, if $\dim \BHF < \infty$ then $(\GG,\mu)$ is Liouville.
\end{lem}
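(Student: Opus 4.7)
The plan is to combine a finite-dimensional compactness argument with the maximum principle for bounded harmonic functions, and then invoke complete reducibility of representations of compact groups. The starting observation is that the $\GG$-action preserves the supremum norm ($\|xf\|_\infty = \|f\|_\infty$), so the image $\rho(\GG) \subseteq GL(V)$ consists of isometries of $(V, \|\cdot\|_\infty)$; as a bounded subset of $GL(V)$ in a finite-dimensional space it is pre-compact, and its closure $K$ is a compact subgroup of $GL(V)$ acting by $\|\cdot\|_\infty$-isometries on $V$.

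The key step (call it the \emph{key lemma}) is: any nonzero finite-dimensional $\GG$-invariant subspace $W \subseteq \BHF$ contains a non-trivial constant function. To prove it, pick $f \in W$ with $\|f\|_\infty = 1$ and a sequence $x_n \in \GG$ with $|f(x_n^{-1})| \to 1$. The translates $(x_n f)$ sit in the unit ball of the finite-dimensional space $W$, so along a subsequence $x_n f \to g$ in $W$; since convergence in $W$ is uniform on $\GG$, the limit $g$ is bounded harmonic with $\|g\|_\infty = 1$ and $|g(1)| = 1$. Setting $\alpha := g(1)$ and $h := \Re(\bar\alpha g)$, one gets a real bounded harmonic function with $h \leq 1$ and $h(1) = 1$; from $\sum_s \mu(s) h(s^{-1}) = 1$ and $h \leq 1$ we get $h(s^{-1}) = 1$ for every $s \in \mathrm{supp}(\mu)$, and iterating this, using that $\mu$ is symmetric and adapted so $\mathrm{supp}(\mu)$ generates $\GG$, forces $h \equiv 1$. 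Combined with $|g| \leq 1$ this gives $g \equiv \alpha$, hence $\alpha \in W$ and $\C \subseteq W$.

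To finish, note that the $K$-fixed vectors in $V$ coincide with the $\GG$-fixed vectors (since $\rho(\GG)$ is dense in $K$), which are exactly the constant functions in $V$; combined with the key lemma this gives $V^K = \C$. Because $K$ is compact, its finite-dimensional representation on $V$ is completely reducible (e.g.\ by averaging any complement projection over the normalized Haar measure on $K$), so we may decompose $V = V^K \oplus V'$ as a $K$-invariant, and hence $\GG$-invariant, direct sum. If $V' \neq 0$ then the key lemma applied to $V'$ yields $\C \subseteq V'$, contradicting $V^K \cap V' = 0$. Therefore $V' = 0$ and $V = \C$. The ``in particular'' statement follows by taking $V = \BHF$.

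The part I expect to be most delicate is the key lemma: one must choose the sequence $(x_n)$ carefully so that the extracted limit actually realizes its sup-norm at the identity, and then push the maximum principle through for a complex-valued bounded harmonic function (via the real part after a unimodular rotation). The representation-theoretic finish is routine once the action has been identified as factoring through the compact group $K$.
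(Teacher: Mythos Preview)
Your proof is correct and follows essentially the same idea the paper sketches (attributed to Shalom): compactness of the orbit in a finite-dimensional space combined with the maximum principle produces a harmonic function attaining its supremum, hence a nonzero constant in the orbit closure---this is exactly your key lemma. The paper gives no further details; your complete-reducibility finish via the compact closure $K \leq GL(V)$ of the isometric $\GG$-action is a clean and standard way to upgrade ``every nonzero invariant subspace contains $\C$'' to ``$V=\C$''.
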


Lemma \ref{lem: finite dimensional implies Liouville} above is relatively classical in the study of Poisson-Furstenberg boundaries.
In private communication, Yehuda Shalom presented a slick argument based on the maximum principle and a compactness argument: The orbit closure of a non-trivial harmonic function with finite dimensional orbit contains a harmonic function with a proper maximum.

For a function $f:\GG \to \C$ define the polynomial-$k$-semi-norm:
\begin{equation}\label{eq:k_norm}
 || f ||_k : = \limsup_{r \to \infty} r^{-k} \cdot \max_{|x| \leq r} |f(x)| .
\end{equation}

We denote
\begin{equation}\label{ref:eq:HF_f_def}
\HF_k(\GG,\mu) := \set{ f:\GG \to \C \ \big| \ \|f\|_k < \infty \ , \ f \textrm{ is $\mu$-harmonic } }.
\end{equation}

The space $\HF_k = \HF_k(\GG,\mu)$ is  the space of $\mu$-harmonic functions
with polynomial growth of degree at most $k$.
Note that  $|| x f ||_k = || f ||_k$ for any $x \in \GG$, $f \in \C^{\GG}$, so the space $\HF_k$ is $\GG$-invariant.
The space $\HF_k(\GG,\mu)$
does not depend  on the choice of generating set for $\GG$ (but does inherently depend on the measure $\mu$).

Recall that a countable group $\GG$ is \define{amenable} if for any  $K \subset \GG$  and any $\epsilon >0$ there exists a finite  set  $F \subset \GG$ so that $|K F| \le (1+\epsilon)|F|$.
There are numerous equivalent definitions of amenability. For definitions and a detailed account of amenability for locally compact groups we refer for instance to \cite{pier_amenable}.

\begin{prop} \label{prop:liouville_is_amenable}
If $\dim \HF_k(\GG,\mu) < \infty$ for some $k \ge 0$, then $\GG$ is amenable.
\end{prop}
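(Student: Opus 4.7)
The plan is to reduce the statement to the classical fact that a Liouville random walk with symmetric adapted step distribution can only live on an amenable group.

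First, I would observe that $\BHF(\GG,\mu) = \HF_0(\GG,\mu) \subseteq \HF_k(\GG,\mu)$ for every $k \geq 0$: if $f$ is bounded then $r^{-k}\max_{|x|\leq r}|f(x)| \leq r^{-k}\|f\|_\infty$, which stays bounded in $r$ for $k=0$ and tends to $0$ for $k\geq 1$, so $\|f\|_k < \infty$ in either case. Hence $\dim\HF_k(\GG,\mu) < \infty$ forces $\dim\BHF(\GG,\mu) < \infty$. Since $\BHF(\GG,\mu)$ is $\GG$-invariant (as remarked in the paper), Lemma \ref{lem: finite dimensional implies Liouville} applied with $V = \BHF(\GG,\mu)$ gives $\BHF(\GG,\mu) = \C$, i.e.\ $(\GG,\mu)$ is Liouville.

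Second, I would invoke the theorem (Furstenberg, Ka{\u\i}manovich--Vershik, Rosenblatt) that if $(\GG,\mu)$ is Liouville with $\mu$ symmetric and adapted, then $\GG$ is amenable. The cleanest self-contained route proceeds via the $0$--$2$ law for symmetric random walks: triviality of the Poisson--Furstenberg boundary excludes the ``$2$'' alternative, so that $\TV{g\cdot\mu^{*n} - \mu^{*n}} \to 0$ for every $g\in\GG$. A Namioka--Reiter level-set argument applied to the non-negative $\ell^1$ function $\mu^{*n}$ then converts this asymptotic $\GG$-invariance into a genuine F{\o}lner sequence, yielding amenability in the sense recalled in the paper.

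The reduction to the Liouville property (the first paragraph) is essentially an unpacking of definitions plus one application of Lemma \ref{lem: finite dimensional implies Liouville}. The substance is the implication Liouville $\Rightarrow$ amenable, and that is where I expect the main obstacle to lie: either one must track down a precise reference whose hypotheses match the courteous (symmetric, adapted, smooth) setting used here, or one must insert a few lines deducing the F{\o}lner condition from the $0$--$2$ law. Both options are standard, and neither requires any of the solvable-group machinery developed later in the paper.
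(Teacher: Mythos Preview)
Your proposal is correct and follows essentially the same route as the paper: reduce to the Liouville property via the inclusion $\BHF = \HF_0 \subseteq \HF_k$ together with Lemma~\ref{lem: finite dimensional implies Liouville}, and then invoke the Ka{\u\i}manovich--Vershik / Rosenblatt theorem that a Liouville measured group is amenable. The only difference is cosmetic: the paper simply cites \cite{kaimanovich_vershik_1983, rosenblatt_81} for the second step, whereas you additionally sketch the $0$--$2$ law / Reiter argument.
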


\begin{proof}
By Lemma \ref{lem: finite dimensional implies Liouville},  if $(\GG,\mu)$ is not  Liouville then  $\dim \BHF(\GG,\mu) =\infty$.

 Since $\BHF(\GG,\mu)=\HF_0(\GG,\mu)$ is a subspace of $\HF_k(\GG,\mu)$ for all $k \ge 0$, it follows that the assumption $\dim \HF_k(\GG,\mu) < \infty$ implies that $\GG$ is Liouville.

Rosenblatt \cite{rosenblatt_81} and independently Ka{\u\i}manovich-Vershik \cite{kaimanovich_vershik_1983}
showed that if $(\GG,\mu)$ is  Liouville then $\GG$ is amenable.
\end{proof}

By Proposition \ref{prop:liouville_is_amenable}, it suffices to consider amenable groups in
Conjecture \ref{conj: main conj}.

A \define{random walk} on $\GG$ with step distribution $\mu$ is a random sequence $(X_t)_t$
defined by $X_t = X_0 S_1 S_2 \cdots S_t$, where $(S_t)_t$ are i.i.d.-$\mu$. 
The canonical measure and expectation on $\GG^{\N}$ of this process, conditioned on $X_0=x$,
are denoted $\Pr_x , \E_x$.  When the subscript is omitted we mean $\Pr = \Pr_1 , \E = \E_1$
($1 = 1_{\GG}$ is the unit element in $\GG$).

Random walks and harmonic functions are intimately related.
$f$ is $\mu$-harmonic if and only if $(f(X_t))_t$ is a martingale.
See \cite[Chapter 5]{Durrett} for more on martingales.

\subsection{Statement of main results}\label{sec: main results}
Our main result is a proof of Conjecture \ref{conj: main conj} in the case that $\GG$ is a virtually solvable group. We recall that any virtually solvable group is in particular amenable, yet many amenable groups are not virtually solvable.

\begin{thm}
\label{thm: main thm}
Let $\GG$ be a
finitely generated virtually solvable group
and let $\mu$ be a courteous probability measure on $\GG$.
If $\dim \HF_k(\GG,\mu) < \infty$ for some $k \ge 1$, then $\GG$ is virtually nilpotent.
\end{thm}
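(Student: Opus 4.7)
The plan is to argue by contradiction: assume $\GG$ is virtually solvable with $\dim \HF_k(\GG,\mu) < \infty$ for some $k \geq 1$, yet $\GG$ is not virtually nilpotent. The goal is to produce infinitely many linearly independent $\mu$-harmonic functions of linear growth on $\GG$, which contradicts $\dim \HF_1 \leq \dim \HF_k < \infty$ (using $\HF_1 \subseteq \HF_k$ for $k \geq 1$). By the Milnor--Wolf theorem, a virtually solvable group is virtually nilpotent if and only if it has polynomial growth, so our assumption is equivalent to $\GG$ having exponential growth. Since the relevant hypotheses pass to finite-index subgroups (with an induced courteous measure) and virtual nilpotence is likewise a finite-index-stable property, we may further assume that $\GG$ itself is solvable of some derived length $n$.

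I would then proceed by induction on $n$. The base case $n \leq 1$ (abelian) is vacuous, since finitely generated abelian groups are virtually nilpotent. For $n \geq 2$, set $N = \GG^{(n-1)}$, which is abelian and normal, and push $\mu$ forward to a courteous measure $\bar\mu$ on $\GG/N$. If $\GG/N$ is not itself virtually nilpotent, the inductive hypothesis yields $\dim \HF_1(\GG/N,\bar\mu) = \infty$; pulling back along the projection $\pi \colon \GG \to \GG/N$ then produces infinitely many linearly independent elements of $\HF_1(\GG,\mu)$ (harmonicity is preserved because $\pi_* \mu = \bar\mu$, and the composition with $\pi$ preserves linear growth since $\pi$ is $1$-Lipschitz between word metrics).

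The crux is the remaining case, where $\GG/N$ is virtually nilpotent but $\GG$ itself has exponential growth. The extra growth must then be carried entirely by the extension $N \to \GG \to \GG/N$, which forces the conjugation action of $\GG/N$ on $N$ to be ``distorting'': in the polycyclic setting, some element of $\GG/N$ acts on $N \otimes_{\Z} \Q$ with an eigenvalue of modulus strictly greater than one; if $N$ is infinitely generated as an abelian group we are instead in a wreath-product-like situation. The plan is to exploit this distortion to exhibit infinitely many linearly independent linear harmonic functions on $\GG$. Probabilistically, one fixes a Borel section $\sigma$ of $\pi$ and decomposes the random walk as $X_t = Z_t \cdot \sigma(\bar X_t)$, where $\bar X_t$ is a random walk on the virtually nilpotent (hence Liouville and polynomially growing) quotient $\GG/N$ and $Z_t$ is an $N$-valued cocycle driven by the twisting action. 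One then builds $\mu$-harmonic functions on $\GG$ as suitable limits of expectations of additive functionals of the $Z_t$-process paired with homomorphisms $N \to \R$ that survive the twisting.

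The main obstacle lies in this last construction: one must verify both that the candidate functions are genuinely $\mu$-harmonic on all of $\GG$ (not merely harmonic boundary values in some weaker sense) and that they satisfy the linear growth bound from the definition of $\HF_1$. The smoothness of $\mu$ is essential here, providing exponential moments that uniformly control the martingale tails of $f(X_t)$, while the polynomial growth of $\GG/N$ controls the density of returns of $\bar X_t$ to balls and hence the rate at which the $N$-cocycle $Z_t$ accumulates. The infinite-dimensionality of the resulting space should ultimately trace back to the fact that, under the twisting action of $\GG/N$, the module $N$ has infinite rank as a rational $\GG/N$-module precisely when $\GG$ has exponential growth; each irreducible or distorted direction in this rational module will furnish an independent degree of freedom in $\HF_1(\GG,\mu)$.
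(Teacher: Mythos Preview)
Your inductive reduction to the critical case---$N$ abelian normal, $\GG/N$ virtually nilpotent, $\GG$ not---is sound and runs parallel to the paper's use of Groves' theorem. The gap is in the construction you propose for that case. Pairing the $N$-valued cocycle with homomorphisms $N \to \R$ fails outright when $N$ is torsion: in the lamplighter $(\Z/p) \wr \Z$ one has $N = \bigoplus_{\Z} \Z/p$, which admits no nonzero homomorphism to $\R$, so your recipe produces nothing in precisely the prototype example. Your assertion that $N$ has infinite rank as a rational $\GG/N$-module exactly when $\GG$ has exponential growth is also false: for $\Z \ltimes_A \Z^2$ with $A \in \mathrm{SL}_2(\Z)$ hyperbolic, the module $N \otimes \Q = \Q^2$ is two-dimensional. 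You correctly flag the eigenvalue of modulus $>1$ there, but you give no mechanism by which a single distorted direction yields \emph{infinitely many} linearly independent elements of $\HF_1$, and your final paragraph openly leaves both harmonicity and the linear-growth bound unverified.

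The paper circumvents all of this by reducing (via Groves) to a finitely generated subgroup of $\AA(\FF)$ for some field $\FF$, then embedding $\FF$ in a local field with an absolute value for which $|\lambda|>1$ at some diagonal element. A single positive harmonic function $f$ is built from exit probabilities of the associated real random walk $(\rho(X_t))_t$ (Theorem \ref{thm:2x2}), with linear growth controlled by explicit gambler's-ruin estimates. The infinite-dimensionality comes not from multiple eigendirections but from the $\GG$-orbit of this one $f$: conjugates $y_n f$ are shown to be linearly independent by comparing their growth along specific rays (Propositions \ref{prop: lower bound good} and \ref{prop: upper bound for orbit}). Because the construction uses only the absolute value on a local field, it works uniformly in characteristic zero and $p$, which is exactly what a homomorphism-to-$\R$ approach cannot do.
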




Theorem \ref{thm: main thm} proves the implication $(4) \Rightarrow (1)$ of Conjecture \ref{conj: main conj} assuming $\GG$ is virtually solvable. 
All the other implications were previously known to hold without the assumption that $\GG$ is virtually solvable:
The implication $(1) \Rightarrow (2)$ is a standard computation and follows from the Bass-Guivarc'h formula \cite{bass_degree_growth_nilpotent,guivarch_degree_growth_nilpotent}. The implication
$(2) \Rightarrow (3)$ is by Kleiner \cite{kleiner_2010} via the method of Colding \& Minicozzi \cite{colding_minicozzi},
although strictly speaking, both Kleiner's proof
and the finitary version of Shalom and Tao \cite{shalom_tao_finitary_gromov} only deal with finitely supported measures.
The implication  $(3) \Rightarrow (4)$ is trivial.

A \textbf{linear group} is  one which can be embedded in a some group of matrices over a field.
Another direct but useful corollary of our result concerns linear groups:
\begin{cor}\label{cor:linear_finite_dim_HF_nilpotent}
Let $\GG$ be a finitely generated linear group
and let $\mu$ be a courteous probability measure on $\GG$. If  there exists $k \geq 1$ such that $\dim \HF_k(\GG,\mu) < \infty$  then $\GG$ is virtually nilpotent.
\end{cor}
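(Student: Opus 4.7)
The plan is to combine the main theorem with the celebrated Tits alternative for linear groups. Recall that Tits' theorem states that every finitely generated linear group either is virtually solvable or contains a non-abelian free subgroup isomorphic to $F_2$. This dichotomy reduces the corollary to the two cases already handled in the paper.

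In the first case, when $\GG$ is virtually solvable, the hypothesis $\dim \HF_k(\GG,\mu) < \infty$ together with courteousness of $\mu$ puts us directly in the setting of Theorem \ref{thm: main thm}, which yields that $\GG$ is virtually nilpotent. So there is nothing more to do in this branch.

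In the second case, $\GG$ contains a subgroup isomorphic to $F_2$. Since subgroups of amenable groups are amenable and $F_2$ is well-known to be non-amenable, $\GG$ itself must be non-amenable. But Proposition \ref{prop:liouville_is_amenable} states that whenever $\dim \HF_k(\GG,\mu) < \infty$ for some $k \geq 0$, the group $\GG$ is amenable. This contradicts our hypothesis, so this case cannot occur.

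I do not expect any real obstacle here: the content is a one-line dichotomy followed by invoking (i) Theorem \ref{thm: main thm} in the solvable branch and (ii) Proposition \ref{prop:liouville_is_amenable} plus non-amenability of $F_2$ in the free branch. The only mild subtlety is to note that the Tits alternative is typically stated for linear groups over a field of characteristic zero but in fact holds for all finitely generated linear groups over any field, which is exactly the generality in which the corollary is phrased.
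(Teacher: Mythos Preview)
Your proof is correct and follows essentially the same approach as the paper. The paper's version is slightly more streamlined in presentation: it first invokes Proposition \ref{prop:liouville_is_amenable} to deduce amenability, then applies the Tits alternative to conclude virtual solvability directly (rather than phrasing it as a dichotomy with a contradictory branch), and finally applies Theorem \ref{thm: main thm}; but the logical content is identical to yours.
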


\begin{proof}
Let $(\GG,\mu)$ be as above.
By Proposition \ref{prop:liouville_is_amenable}, $\GG$ is amenable. By the Tits alternative \cite{tits_1972}, an amenable finitely generated linear group is virtually solvable. It follows by Theorem \ref{thm: main thm} that $\GG$ is  virtually nilpotent.
\end{proof}

Our proof of Theorem \ref{thm: main thm} is based on the following theorem,
which is an explicit construction of a positive harmonic function of linear growth for finitely generated subgroups of the affine group.
Let $\FF$ be a field. The \define{affine group} of $\FF$ is defined by
\begin{equation}\label{eq:AA_def}
\AA(\FF) := \left\{ x \mapsto \lambda x + c~:~ \lambda \in \FF^{\times}~,~ c \in \FF \right\}
\end{equation}
(here $\FF^{\times}$ denotes the multiplicative group of invertible elements).

\begin{thm}\label{thm:2x2}
Let $\GG$ be a finitely generated subgroup of $\AA(\FF)$
which is not virtually abelian.
Suppose that $\mu$ is a courteous measure on $\GG$.
Then, there exists a positive, non-constant, $\mu$-harmonic function $f:\mathbb{G} \to [0,\infty)$
that has linear growth.

Moreover, the vector space spanned by the  orbit of $f$ under the $\GG$-action is infinite dimensional; \ie
$\dim \spn(\GG f) = \infty$.
\end{thm}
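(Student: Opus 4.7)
The plan is to reduce the theorem to showing that $(\GG,\mu)$ is not Liouville, i.e.\ that there exists a non-constant bounded $\mu$-harmonic function $h$ on $\GG$. Given such an $h$, the function $f := h - \inf_\GG h + 1$ is positive, non-constant, bounded (so $\|f\|_1 = 0 < \infty$), and $\mu$-harmonic. The subspace $V := \spn(\GG f) \subseteq \BHF$ is $\GG$-invariant and contains the non-constant function $f$; by Lemma~\ref{lem: finite dimensional implies Liouville} it cannot be finite-dimensional. This gives both conclusions of the theorem simultaneously.

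To exhibit a non-constant bounded harmonic function, the plan is to build an explicit $\mu$-boundary from the affine action. Since $\GG$ is not virtually abelian, the linear character $\lambda\colon\GG\to\FF^\times$ must have infinite image (otherwise $\ker\lambda$ would be a finite-index subgroup consisting of translations, hence abelian); as $\GG$ is finitely generated, this forces $\lambda(\GG)$ to contain an element $\lambda(g_0)$ of infinite multiplicative order. One then chooses an absolute value $|\cdot|_v$ on $\FF$, archimedean or non-archimedean, for which $|\lambda(g_0)|_v \neq 1$, and sets $K := \FF_v$. The group $\AA(K)$ acts by isometries on an associated hyperbolic space $\HH$ (the upper half plane $\HH^2$ or $\HH^3$ when $K$ is $\R$ or $\C$; the Bruhat--Tits tree of $\mathrm{PGL}_2$ when $K$ is non-archimedean), fixing the point $\infty \in \partial\HH$ and acting as affine transformations on $\partial\HH\setminus\{\infty\} = K$. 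Writing $X_t = \Clamt{C_t}{\Lambda_t}$ with $\Lambda_t = \prod_{i=1}^t \lambda(S_i)$ and $C_t = \sum_{i=1}^t \Lambda_{i-1}c(S_i)$, I would prove that $X_t\cdot x_0$ converges almost surely to a random point $\hat\xi\in\partial\HH$ whose distribution is not a Dirac mass, and then define
\[ f_F(g) := \E\bigl[F(g\cdot\hat\xi)\bigr] \]
for a suitable bounded Borel $F\colon \partial\HH\to\R$. Harmonicity is the Markov/shift-invariance identity $\hat\xi = S_1\cdot\hat\xi'$ where $\hat\xi'$ is independent of $S_1$ and equidistributed with $\hat\xi$; non-constancy follows by choosing an $F$ that separates points in the support of $\hat\xi$'s law.

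The main obstacle is precisely the almost-sure convergence $X_t\cdot x_0 \to \hat\xi$ with non-degenerate law. The naive hyperbolic-drift strategy fails: because $\mu$ is symmetric and courteous, the height process $Y_t := \log|\Lambda_t|_v$ is a recurrent symmetric random walk on $\R$ of finite variance, so the trajectory does not escape vertically, and one cannot conclude convergence by standard drift/Oseledec arguments. Instead one must analyze the joint behaviour of the oscillating height $Y_t$ and the cocycle-driven position $C_t$: during the (a.s.\ recurring) excursions when $Y_t$ drops to a new running minimum $-M$, all subsequent contributions $\Lambda_{i-1}c(S_i)$ to $C_t$ are damped by a factor $e^{-M}$, so $C_t$ stabilizes along the corresponding subsequence. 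Assembling these subsequential limits into a genuine a.s.\ limit in $K\cup\{\infty\} = \partial\HH$, and ruling out the degenerate case where the limit is concentrated on $\{\infty\}$ (which would only recover the already-known homomorphism $-\log|\lambda(\cdot)|_v$ as a harmonic function), is the technical heart of the argument and will make essential use of the smoothness and symmetry built into the courteous hypothesis.
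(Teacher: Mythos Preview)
Your reduction to non-Liouville is fatally flawed: there exist finitely generated subgroups $\GG<\AA(\FF)$ that are not virtually abelian for which $(\GG,\mu)$ \emph{is} Liouville for every courteous $\mu$. The paper itself supplies the prototype. The lamplighter group
\[
\GG=\Bigl\{\clamt{c}{x^n}\ :\ c\in\mathbb{F}_p[x,x^{-1}],\ n\in\Z\Bigr\}<\AA\bigl(\mathbb{F}_p(x)\bigr)
\]
is not virtually abelian (indeed not virtually nilpotent), yet every courteous measure on it is Liouville (see the remark surrounding Conjecture~\ref{conj: Liouville invariance} and \cite[\S6.3]{kaimanovich_vershik_1983}). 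So your bounded harmonic $h$ need not exist, and the appeal to Lemma~\ref{lem: finite dimensional implies Liouville} never gets off the ground. Your own diagnosis of the obstruction---that the height process $Y_t=\log|\Lambda_t|_v$ is a recurrent symmetric walk---is exactly why the Poisson boundary can be trivial here; the subsequential-limit patch you sketch does not salvage this, because along times when $Y_t$ hits a new minimum the partial sums $C_t=\sum_{i\le t}\Lambda_{i-1}c(S_i)$ still contain undamped contributions from the earlier excursions where $Y_{i-1}$ was large and positive.

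The paper's approach is therefore genuinely different in kind: it does not look for a bounded harmonic function at all, but constructs an \emph{unbounded} positive harmonic function of linear growth directly, via
\[
f(x)=\lim_{k\to\infty} r_k\,\Pr_x\bigl[\,|c(X_{\sigma_{r_k}})|<3\,\bigr],
\]
where $\sigma_r$ is the exit time of $\rho(X_t)=-\log|\lambda(X_t)|$ from $[-r,r]$. Linear growth comes from gambler's-ruin type estimates for the one-dimensional walk $\rho(X_t)$ (Propositions~\ref{prop: upper bound general} and~\ref{prop: lower bound good}), and the infinite-dimensionality of $\spn(\GG f)$ is proved by hand (Proposition~\ref{prop: positive HFs}), not via Lemma~\ref{lem: finite dimensional implies Liouville}. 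The point is precisely that $\HF_1$ can be infinite dimensional while $\BHF$ is one dimensional.
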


We prove Theorem \ref{thm:2x2} in Sections \ref{scn: RW on reals} and \ref{sec:2x2}.
Our construction of the function $f$  is a generalization of a particular construction of a positive harmonic function on the lamplighter group, which we now recall:
Let $\GG < \clamt{\mathbb{F}_p(x)}{\mathbb{F}_p(x)^\times}$ be the \define{lamplighter group with lamps in $\mathbb{F}_p$} which can be defined by:
$$\GG = \left\{
\clamt{c}{x^n}
\ : \
c \in \mathbb{F}_p[x,x^{-1}] \ , \  n \in \zz
 \right\}.$$

The  function $f_p$ is given by:
\begin{equation} 
f_p(x)= \lim_{k \to \infty} r_k \Pr_x \left[ c(\sigma_{r_k}) \in \mathbb{F}_p[x]\right],
\end{equation}
where:
\begin{itemize}
\item{$(r_k)_k$ is a certain increasing sequence of integers.}
\item{$\sigma_r = \inf \{t \geq 0 \ : \  \log |\lambda(t)| \not\in[-r,r]\}$, and $\lambda(t)$ denotes the upper-left entry of the random
$2 \times 2$ matrix $X_t$ (where $X_t$ is the random walk at time $t$).}
\item{$c(t)$ denotes the upper-right entry of the random $2\times 2$ matrix $X_t$.}
\end{itemize}

Subsequently to writing  a preliminarily version of our results, the construction above was exploited and  generalized in a different direction  by
Tointon \cite{tointon_harmonic} to characterize groups with the property that the space of {\em all} harmonic functions
is finite dimensional. Also, after finishing this paper we observed that a somewhat related construction of positive harmonic functions on affine groups appears in \cite{BBE, elie_bougerol}. However, it is not immediately clear if the results of  \cite{BBE, elie_bougerol} can be directly applied. One missing piece is  an estimate for the polynomial growth rate of those functions. 

In Section \ref{sec:reduction}  we carry out a reduction of Theorem \ref{thm: main thm} to the case where
$\GG$ is a subgroup of $\AA(\FF)$  as in the assumption of Theorem \ref{thm:2x2}.
For this reduction we invoke a theorem of Groves \cite{Groves78} regarding finitely generated solvable groups,
see Theorem \ref{thm: Groves}.

\section{Further research directions and open questions}
\label{sec:conclusion}

Before going into the proofs, let us mention some further research directions and some conclusions
from this work.

\subsection{Consequences of  Conjecture \ref{conj: main conj}}
Much is known about random walks on  finitely generated group $\GG$ of polynomial growth,
see \eg \cite{Alex02, HS-C93}.  For example, the random walk is {\em diffusive};
that is, $\E [ |X_t|^2 ] \asymp C t$.  Another example regards the {\em entropy} of the random walk:
since a ball has polynomial growth, we have that $H(X_t) = O(\log t)$
(for more on entropy and random walks see \cite{kaimanovich_vershik_1983} and references therein).

Thus, the following are  also consequences of Conjecture \ref{conj: main conj}:
\begin{conj}
Let $\mu$ be a courteous probability measure on $G$ such that $\dim \HF_1(\GG,\mu) < \infty$.
Then:
\begin{itemize}
\item $\E [ |X_t|^2 ] = O(t)$ where $(X_t)_t$ is a $\mu$-random walk on $\GG$.
\item $H(X_t) = O(\log t)$ where $H(\cdot)$ is the Shannon entropy of a discrete random variable.
\end{itemize}
\end{conj}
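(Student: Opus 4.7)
The plan is to derive both statements \emph{conditionally} from Conjecture \ref{conj: main conj}, combined with classical random-walk estimates on virtually nilpotent groups.

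First I would reduce to the nilpotent setting. By Conjecture \ref{conj: main conj}, the hypothesis $\dim \HF_1(\GG,\mu)<\infty$ (condition (4)) forces condition (1), so $\GG$ is virtually nilpotent and, by Bass--Guivarc'h, has polynomial growth of some integer degree $D$. Passing to a finite-index torsion-free nilpotent normal subgroup $H\leq\GG$ is harmless: the walk on $\GG$ restricted (via hitting times) to $H$ has a symmetric, adapted, smooth step distribution, and the two quantities of interest differ only by multiplicative constants between $\GG$ and $H$.

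For the diffusive estimate $\E[|X_t|^2]=O(t)$, I would invoke the Gaussian-type heat-kernel bounds of Alexopoulos \cite{Alex02} and Hebisch--Saloff-Coste \cite{HS-C93}: on a virtually nilpotent group of homogeneous dimension $D$, with $\mu$ finitely supported, symmetric and adapted, one has $p_t(x)\le C\,t^{-D/2}\exp(-c|x|^2/t)$ together with a matching lower bound. Combined with the volume estimate $|B_r|\asymp r^D$, summing $|x|^2\,p_t(x)$ over dyadic shells yields $\E[|X_t|^2]=O(t)$. Extending the bound from finitely supported to merely \emph{courteous} $\mu$ is a standard perturbation: the smoothness of $\mu$ provides an exponential moment, so $\mu$ can be decomposed as the convolution of a finitely supported symmetric measure with a remainder of exponentially decaying tail, and the Gaussian estimate is stable under this operation.

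For the entropy estimate, the same Gaussian upper bound yields $-\log p_t(x)\ge (D/2)\log t-O(1)$ on the support. The diffusive bound just obtained, via Chebyshev, puts mass $o(1/\log t)$ outside the ball of radius $r_t:=\sqrt{t\log t}$, a ball of cardinality at most $C\,t^{D/2}(\log t)^{D/2}$. Splitting $H(X_t)=-\sum_x p_t(x)\log p_t(x)$ at $r_t$ and bounding each piece — the inside by $\log|B_{r_t}|$ and the outside tail by $(\log t)\cdot o(1/\log t)$ after absorbing the exponentially small $|x|^2$ contribution — gives $H(X_t)\le(D/2)\log t+O(\log\log t)=O(\log t)$.

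The main (indeed only conceptual) obstacle is Conjecture \ref{conj: main conj} itself; the implication $(4)\Rightarrow(1)$ is what the present paper only establishes in the virtually solvable case. Once the hypothesis $\dim\HF_1(\GG,\mu)<\infty$ is known to force virtual nilpotence, both estimates become classical facts about random walks on polynomial-growth groups, modulo the routine technicality of upgrading finitely-supported heat-kernel bounds to the courteous class using the exponential moment built into smoothness.
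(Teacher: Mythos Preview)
Your proposal is correct and matches the paper's own reasoning: the paper does not prove this statement but merely records it as a conjecture, noting in the preceding paragraph that it is a consequence of Conjecture~\ref{conj: main conj} via the known diffusive behavior \cite{Alex02, HS-C93} and the trivial entropy bound $H(X_t)\le \log|B_{Ct}|=O(\log t)$ on polynomial-growth groups. Your write-up is in fact more careful than the paper's one-line justification, since you handle the courteous (not necessarily finitely supported) case explicitly via a tail-splitting argument, whereas the paper's entropy remark implicitly assumes finite support.
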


Conjecture \ref{conj: main conj} implies that if one space $\HF_k$ is finite dimensional, then all of them are:

\begin{conj}
Suppose that $\mu$ is a courteous probability measure on a finitely generated group $\GG$ such that
there exists $k$ with $\dim \HF_k(\GG,\mu) < \infty$.

Then, for {\em every} $k$ we have that $\dim \HF_k(\GG,\mu) < \infty$.
\end{conj}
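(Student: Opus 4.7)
The plan is to reduce this statement to Conjecture \ref{conj: main conj}, after which it follows immediately from Kleiner's theorem. Suppose $\dim \HF_k(\GG,\mu) < \infty$ for some $k$. For $j \le k$ the inclusion $\HF_j \subseteq \HF_k$ gives finite-dimensionality for free, so the only interesting case is $j > k$.

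The first step is to invoke the implication $(4) \Rightarrow (1)$ of Conjecture \ref{conj: main conj}: from $\dim \HF_k(\GG,\mu) < \infty$ conclude that $\GG$ is virtually nilpotent. In the solvable case this is exactly Theorem \ref{thm: main thm} of the present paper; in general it is conjectural. The second step is then automatic: by the Bass-Guivarc'h formula virtual nilpotence implies polynomial growth, and Kleiner's theorem \cite{kleiner_2010} (the implication $(2) \Rightarrow (3)$ of Conjecture \ref{conj: main conj}) yields $\dim \HF_j(\GG,\mu) < \infty$ for every $j \geq 0$.

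The main obstacle is, of course, Conjecture \ref{conj: main conj} itself: without it there seems to be no direct way to bootstrap finite-dimensionality at one polynomial degree to higher ones. The space $\HF_k$ does not a priori carry enough algebraic structure to pin down $\HF_{k+1}$, and a naive attempt to differentiate or iterate harmonic functions does not preserve harmonicity. A conceivable alternative route might combine the Liouville property (guaranteed for $\GG$ by Proposition \ref{prop:liouville_is_amenable}) with entropy or displacement estimates for the random walk in an attempt to directly control higher-degree harmonic functions, but any such strategy appears to require at least partial structural information about $\GG$, bringing us essentially back to Conjecture \ref{conj: main conj}. Consequently, within the present framework I would present this statement strictly as a corollary of the main conjecture rather than attempt an independent proof.
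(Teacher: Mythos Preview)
Your analysis is correct and matches the paper's treatment exactly: the paper does not prove this statement but presents it as a conjecture, prefacing it with the remark that ``Conjecture \ref{conj: main conj} implies that if one space $\HF_k$ is finite dimensional, then all of them are.'' Your write-up simply spells out that implication (via $(4)\Rightarrow(1)\Rightarrow(2)\Rightarrow(3)$) and correctly identifies the main conjecture as the essential obstacle, which is precisely the paper's position.
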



A well-known open question in the subject is whether the Liouville property is a group invariant. We must restrict ourselves to a certain
class of measures:  Indeed, for the lamplighter group on $\Z$, $\LL(\Z) = (\Z/2\Z) \wr \Z$,
it is quite simple to construct finitely-supported but non-symmetric measures and infinitely supported symmetric
but non-smooth measures which are  non-Liouville.
However, any courteous measure on  $\LL(\Z)$ is Liouville \cite[Section 6.3]{kaimanovich_vershik_1983}.
(Actually \cite{kaimanovich_vershik_1983} proves this for finitely supported symmetric adapted measures.
The proof for courteous measures is along the same lines, and does not require any new ideas. One may also
use a coupling argument in the spirit of \cite{BDKYprep}.)

The following conjecture has been unresolved for quite some time:

\begin{conj} \label{conj: Liouville invariance}
Let $\mu,\nu$ be two courteous probability measures on a finitely generated group $\GG$.
Then $(\GG,\mu)$ is Liouville if and only if $(\GG,\nu)$ is Liouville.
\end{conj}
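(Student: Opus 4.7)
The plan is to reduce the conjecture to an asymptotic entropy statement via the criterion of Ka{\u\i}manovich and Vershik: $(\GG,\mu)$ is Liouville if and only if the asymptotic entropy $h(\mu) := \lim_{t \to \infty} t^{-1} H(X_t)$ vanishes. Smoothness of a courteous $\mu$ implies $H(\mu) < \infty$ (the exponential moment of $|S|$ dominates its Shannon entropy), so $h(\mu)$ is well-defined for every courteous $\mu$. Under this reduction the conjecture becomes: for any two courteous measures $\mu, \nu$ on $\GG$, $h(\mu) = 0$ if and only if $h(\nu) = 0$.

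The most natural attack is a coupling argument. I would try to construct a joint process $(X_t, Y_t)$ on $\GG \times \GG$ whose marginals are the $\mu$- and $\nu$-random walks, and for which $\E \dist_S(X_t, Y_t) = o(t)$. Using the bound $H(X_t \mid Y_t) \leq \E \log |B(1, \dist_S(X_t, Y_t))| \leq C \cdot \E \dist_S(X_t, Y_t)$, valid because balls in $\GG$ have at most exponential growth, a sublinear coupling yields $|H(X_t) - H(Y_t)| = o(t)$ and hence equality of asymptotic entropies. To build such a coupling one may exploit adaptedness to write $\mu^{\ast k}$ and $\nu^{\ast k}$ as overlapping mixtures for large $k$, and then synchronize the two walks through a shared source of randomness combined with a small random time change; the smoothness assumption would be used to ensure that the discrepancy between the embedded walks concentrates, via standard exponential martingale estimates.

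The main obstacle, and the reason this conjecture has stood open for so long, is precisely the construction of such a coupling in full generality. For abelian and nilpotent groups the linear algebraic structure makes a coupling essentially automatic; for the lamplighter group an explicit description of the Poisson boundary bypasses the coupling step entirely, which is the route taken in \cite{kaimanovich_vershik_1983}. A uniform treatment of arbitrary finitely generated groups appears to require either a $\mu$-independent geometric model for the Poisson boundary, or a comparison of Dirichlet forms $\mathcal{E}_\mu$ and $\mathcal{E}_\nu$ strong enough to transfer triviality of the tail $\sigma$-algebra from one walk to the other. Neither tool is presently available at this level of generality, and providing either one is essentially equivalent to settling the conjecture. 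A more modest goal, and perhaps a realistic intermediate step, would be to prove the conjecture under the additional hypothesis that $\mathrm{supp}(\nu) \subseteq \mathrm{supp}(\mu)^{\ast k}$ for some $k$, reducing the general case to a symmetrization-type argument.
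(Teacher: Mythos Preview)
There is nothing to compare here: the paper does \emph{not} prove this statement. It is presented as Conjecture~\ref{conj: Liouville invariance} in the section on open problems, prefaced by the remark that it ``has been unresolved for quite some time.'' Your write-up is consistent with this --- you yourself call it ``the reason this conjecture has stood open for so long'' --- so what you have produced is a research outline rather than a proof, and you seem aware of that.

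On the content of your outline: the reduction to asymptotic entropy via the Ka{\u\i}manovich--Vershik criterion is correct, and the observation that smoothness forces $H(\mu)<\infty$ is fine. The coupling heuristic is a reasonable line of attack and is indeed the spirit of what works in the special cases you mention (and of the coupling arguments alluded to in \cite{BDKYprep}). But as you correctly diagnose, the step ``construct a coupling with $\E\dist_S(X_t,Y_t)=o(t)$ for arbitrary $\GG$'' is exactly the missing ingredient, and nothing in your proposal supplies it. The suggestion to first handle the case $\mathrm{supp}(\nu)\subseteq\mathrm{supp}(\mu)^{\ast k}$ does not obviously reduce the difficulty: even comparing $\mu$ with $\mu^{\ast 2}$ (which trivially satisfies that inclusion) already requires knowing that the Liouville property is stable under convolution powers, which is easy, but the general containment of supports gives you no control on how $\nu$ weights elements inside that support.

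In short: your proposal accurately surveys the landscape around an open problem, but it is not a proof, and the paper does not claim one either.
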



Regarding harmonic functions of polynomial growth, the analogous question is: 

\begin{conj}\label{conj:dim_HF_k_invariance}
Let $\mu,\nu$ be two courteous probability measures on a finitely generated group $\GG$.
Then, for any $k$,
$$ \dim \HF_k(\GG,\mu) = \dim \HF_k(\GG,\nu) . $$
\end{conj}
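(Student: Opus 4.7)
The plan is to reduce Conjecture \ref{conj:dim_HF_k_invariance} to Conjecture \ref{conj: main conj} combined with the structure theorem for $\HF_k$ on virtually nilpotent groups established in the follow-up work \cite{arXiv:1505.01175}. The strategy is to split the argument according to the (in)finiteness of $\dim \HF_k(\GG,\mu)$, and in each case show that both $\dim \HF_k(\GG,\mu)$ and $\dim \HF_k(\GG,\nu)$ depend only on the group.

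In the finite-dimensional case, suppose $\dim \HF_k(\GG,\mu) < \infty$. Granting Conjecture \ref{conj: main conj}, this forces $\GG$ to be virtually nilpotent, and then an extension of Kleiner's theorem to courteous measures gives $\dim \HF_k(\GG,\nu) < \infty$ as well. The structure theorem of \cite{arXiv:1505.01175} identifies $\HF_k$ in this case with a space canonically described in terms of the group: its elements are, up to passing to a finite-index subgroup and adding a constant, polynomial expressions of bounded degree in homomorphisms $\GG \to \R$, with a filtration controlled by the lower central series of a nilpotent subgroup of finite index. This description makes no reference to the measure, so the dimensions must agree.

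In the infinite-dimensional case, invoke Conjecture \ref{conj: main conj} again: $\dim \HF_k(\GG,\mu) = \infty$ if and only if $\GG$ is not virtually nilpotent, a purely group-theoretic property, so both $\dim \HF_k(\GG,\mu)$ and $\dim \HF_k(\GG,\nu)$ are infinite. To upgrade this to equality of cardinals, one expects both dimensions to equal $2^{\aleph_0}$; such a lower bound should follow by exhibiting a continuum-sized family of linearly independent $\mu$-harmonic functions of growth at most $k$, for instance by integrating suitable boundary data against a Martin-type kernel associated to $(\GG,\mu)$. Since both spaces embed in $\C^\GG$, we have the a priori bound $\dim \HF_k(\GG,\mu) \leq 2^{\aleph_0}$, completing the equality.

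The main obstacle to this approach is Conjecture \ref{conj: main conj} itself, which is the central open problem of this paper and is established here only for virtually solvable groups. Any proof of Conjecture \ref{conj:dim_HF_k_invariance} that avoids this reduction would most likely require a direct comparison between $\HF_k(\GG,\mu)$ and $\HF_k(\GG,\nu)$, perhaps through a coupling of the two random walks or an entropy-type invariance argument, mirroring the methods sought for the analogous and still open Liouville invariance question (Conjecture \ref{conj: Liouville invariance}).
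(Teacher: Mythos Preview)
The statement you are attempting to prove is stated in the paper as a \emph{conjecture}, not a theorem; the paper offers no proof and explicitly leaves it open. The only commentary the paper gives is that the conjecture has been verified for virtually solvable groups (and more generally when both dimensions are known a priori to be finite) in the follow-up work \cite{arXiv:1505.01175}. There is therefore no ``paper's own proof'' to compare against.

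Your proposal is not a proof but a reduction: you show that Conjecture~\ref{conj:dim_HF_k_invariance} would follow from Conjecture~\ref{conj: main conj} together with the structure theorem of \cite{arXiv:1505.01175}. You identify this correctly yourself. This reduction is precisely in the spirit of the paper's own remark that the $k=0$ case is Conjecture~\ref{conj: Liouville invariance}, and that the finite-dimensional case is handled in \cite{arXiv:1505.01175}; so your write-up is essentially an elaboration of what the paper already indicates, not new progress.

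A couple of genuine soft spots in the reduction itself: (i) in the finite case you need Kleiner's theorem for courteous (not just finitely supported) $\nu$, and the paper explicitly flags that the existing proofs treat only finitely supported measures; (ii) in the infinite case your argument that both dimensions equal $2^{\aleph_0}$ is entirely speculative --- the Martin-kernel construction you allude to does not obviously produce harmonic functions of \emph{polynomial} growth, and in any case the paper treats ``$\infty=\infty$'' as the intended content (cf.\ the remark that $\dim\BHF$ is either $1$ or $\infty$), so the cardinal-arithmetic refinement is likely beside the point.
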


This conjecture has been verified for the class of virtually solvable groups in \cite{arXiv:1505.01175}, building on  results in the current paper, or more generally under the assumption that $\HF_k(\GG,\mu)$ and $\dim \HF_k(\GG,\nu)$ are both finite.
Note that Conjecture \ref{conj: Liouville invariance} is just the $k=0$ case of Conjecture \ref{conj:dim_HF_k_invariance},
because $\dim \BHF(\GG,\mu)$ is either $1$ or $\infty$.

Progress toward proving Conjecture \ref{conj: main conj} will probably require an understanding of the 
kernel of the $\GG$-action on $\HF_k(\GG,\mu)$.
For example, if this kernel is trivial for some large enough $k$, the group is linear, and our results
hold in this case.

\begin{ques}
Suppose that $\mu$ is a courteous measure on a finitely generated group $\GG$.
Describe the kernel of the $\GG$-action on $\HF_k(\GG,\mu)$.
\end{ques}

\subsection{Locally compact metric groups}

Throughout this paper we considered finitely generated countable groups. However, the definition of the spaces $\HF_k(\GG,\mu)$ can be formulated for any measured group admitting a left-invariant metric,
and it is natural to attempt to extend our results to the more general setting.
The reader may verify that the proof of Theorem \ref{thm:2x2} does not actually require $\GG$ to be finitely generated,
rather that it should be equipped with a left invariant metric for which the conclusion of Lemma \ref{lem:alpha_distance} holds.
It seems plausible that the reduction from Theorem \ref{thm:2x2} to Theorem \ref{thm: main thm} should hold for groups admitting a left-invariant metric in some greater generality.
For instance, a topological version of the Tits alternative
is known \cite{glander_breuillard_topological_tits}.

Some care is required when the word metric for a finitely generated group is replaced by an arbitrary left-invariant metric. To illustrate the point, consider the group $\GG=\zz$,
with respect to the invariant metric $d(n,m) = \sqrt{| n-m|}$.  It is no long true that homomorphisms are Lipschitz with respect to this metric.

\subsection{Lipschitz harmonic functions}

We believe the positive harmonic function $f$ appearing in Theorem \ref{thm:2x2} is in fact Lipschitz.
The probability estimates required to prove this seem to be more delicate then those appearing in Section \ref{sec:2x2}.
A proof that $f$ is Lipschitz would allows us to conclude that any finitely generated solvable group $\GG$ such that  the space of {Lipschitz harmonic functions is finite dimensional is virtually nilpotent, slightly improving Theorem \ref{thm: main thm}.
Note that Kleiner's strategy to prove Gromov's Theorem only requires that $\dim \LHF(\GG,\mu) < \infty$.

\begin{conj}
Let $\mu$ be a courteous probability measure on $\GG$. If
$\dim \LHF(\GG,\mu) < \infty$ then $\dim \HF_1(\GG,\mu) < \infty$.
\end{conj}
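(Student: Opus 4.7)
The plan is to upgrade the construction in Theorem \ref{thm:2x2} so that the positive harmonic function it produces is Lipschitz, and then to rerun the reduction of Section \ref{sec:reduction} with $\LHF$ in place of $\HF_1$. As with Theorem \ref{thm: main thm}, this strategy only addresses virtually solvable $\GG$; beyond that class neither the affine-group construction nor Groves' theorem is available, and an independent argument would be required.

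Several reductions are independent of the Lipschitz upgrade. Any bounded function on the Cayley graph is automatically Lipschitz with constant at most $2\|f\|_\infty$, so $\BHF(\GG,\mu)\subseteq\LHF(\GG,\mu)$, and the hypothesis $\dim\LHF(\GG,\mu)<\infty$ forces $\dim\BHF(\GG,\mu)<\infty$. Lemma \ref{lem: finite dimensional implies Liouville} then gives that $(\GG,\mu)$ is Liouville, and so $\GG$ is amenable by \cite{rosenblatt_81, kaimanovich_vershik_1983}. Note also that $\LHF$ is $\GG$-invariant, since left translation preserves Lipschitz constants.

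The technical heart is to show that the function $f(x)=\lim_{k\to\infty} r_k\Pr_x[c(\sigma_{r_k})\in\FF_p[x]]$ from Theorem \ref{thm:2x2} satisfies $\sup_{x\in\GG,\,s\in S}|f(xs)-f(x)|<\infty$. The linear-growth estimate of Section \ref{sec:2x2} is essentially $\Pr_x[c(\sigma_{r_k})\in\FF_p[x]]=O(1/r_k)$ in the regime $|x|\lesssim r_k$; the Lipschitz bound requires the discrete gradient of this probability in $x$ to decay like $O(1/r_k^2)$, so that after multiplication by the prefactor $r_k$ a bounded difference survives the limit. The natural tool is a coupling of the $\mu$-walks started from $x$ and $xs$, combined with a ladder-variable analysis of $\log|\lambda(t)|$ on the stripe $[-r_k,r_k]$, using the joint distribution of $(\lambda(\sigma_{r_k}),c(\sigma_{r_k}))$ rather than the marginal used for the growth estimate. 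Granting this improvement, the orbit argument in Theorem \ref{thm:2x2} produces an infinite-dimensional subspace of $\LHF(\GG,\mu)$ for any finitely generated non-virtually-abelian subgroup of $\AA(\FF)$; the reduction of Section \ref{sec:reduction} via Groves' theorem (Theorem \ref{thm: Groves}) then transports this to arbitrary virtually solvable, non-virtually-nilpotent $\GG$, because pullback along group homomorphisms carries Lipschitz functions to Lipschitz functions. Contrapositively, $\GG$ is virtually nilpotent, and Kleiner's theorem \cite{kleiner_2010} gives $\dim\HF_1(\GG,\mu)<\infty$.

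The principal obstacle is extracting the extra factor of $1/r_k$ in the discrete-gradient estimate: a naive triangle inequality only returns the known linear-growth bound, and the walks starting at $x$ and $xs$ differ in their affine translation components from time zero, so the required cancellation must come from a genuinely joint analysis of $(\lambda,c)$ at the stopping time $\sigma_{r_k}$ rather than from the coarse exit estimates that suffice for linear growth. A secondary obstacle is the restriction to virtually solvable groups, inherited from Theorem \ref{thm: main thm}.
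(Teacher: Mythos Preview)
The statement you are attempting is listed in the paper as a \emph{conjecture}; the paper offers no proof. What you outline is exactly the strategy the authors themselves sketch in the subsection on Lipschitz harmonic functions: show that the positive harmonic function $f$ of Theorem \ref{thm:2x2} is Lipschitz, then rerun the reduction of Section \ref{sec:reduction} with $\LHF$ in place of $\HF_1$. The paper explicitly says the probability estimates needed for the Lipschitz bound ``seem to be more delicate'' than those of Section \ref{sec:2x2} and does not carry them out.

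Your proposal therefore has two genuine gaps. The first is that the key step --- the $O(1/r_k^2)$ bound on the discrete gradient of $\Pr_x[\,|c(X_{\sigma_{r_k}})|<3\,]$ --- is not proved; you write ``Granting this improvement,'' but that improvement \emph{is} the content of the conjecture in the solvable case. You correctly identify that a naive triangle inequality only recovers the linear-growth estimate, and you name a plausible mechanism (coupling plus a joint analysis of $(\lambda,c)$ at $\sigma_{r_k}$), but no actual estimate is supplied. The second gap is scope: the conjecture is stated for arbitrary finitely generated $\GG$, while your reduction via Groves' theorem reaches only virtually solvable groups, as you acknowledge. So even a complete Lipschitz upgrade would yield a strictly weaker statement than the conjecture.

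A minor point: the formula you quote, $\Pr_x[c(\sigma_{r_k})\in\FF_p[x]]$, is the lamplighter special case; the function actually constructed in Section \ref{sec:2x2} is $f(x)=\lim_k r_k\Pr_x[\,|c(X_{\sigma_{r_k}})|<3\,]$.
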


\section{Reduction from solvable  to subgroups of $\AA(\FF)$}
\label{sec:reduction}

\subsection{Random walks and finite index subgroups}\label{sec:RW_finite_index_subgrp}
The nature of our result forces us to pass to finite index subgroups in the course of the proof.
In this section we review the basic correspondence between harmonic functions on a group and on a finite index subgroup.

In this section $\GG$ is a general finitely generated group,  $(X_t)_t$ is a random walk on $\GG$ with jump
distribution $\mu$, where $\mu$ is a courteous probability measure on $\GG$.
For a subgroup $\HH < \GG$,
define the \define{hitting time} 
$\tau_\HH = \inf \set{ t \geq 1 \ : \ X_t \in \HH }$.
We say that $\HH$ is a \define{recurrent subgroup} of $\GG$ if $\tau_\HH < \infty$ a.s.
It is well known that a subgroup of finite index is always recurrent. Furthermore, the expectation of $\tau_\HH$ is equal to $[\GG:\HH]$ (see \eg \cite{HLT14} for a development of such relations in this context).

A random variable $X$ has an \emph{exponential tail} if $\Pr[ |X| >t ] < C e^{-ct}$ for some constants $C,c>0$. It is straightforward to see that
$X$ has an exponential tail if and only if
$\E[e^{\alpha |X|}] < \infty$ for some $\alpha >0$.

We need the following observation:
\begin{lem}\label{lem:return_time_expoenetial}
Let $[\GG:\HH] < \infty$.
For any adapted measure $\mu$, $\tau_\HH$ has an exponential tail.
\end{lem}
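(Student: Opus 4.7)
The plan is to reduce this to the standard exponential-tail estimate for return times in finite Markov chains. Set $Y_t := \HH X_t \in \HH\backslash\GG$; since $X_{t+1} = X_t S_{t+1}$ with the $S_i$ i.i.d.-$\mu$ and independent of $X_t$, the sequence $(Y_t)_t$ is a Markov chain on the \emph{finite} set $\HH\backslash\GG$ (of size $[\GG:\HH]$), with transition $\HH g \mapsto \HH g s$ of probability $\mu(s)$. The random variable $\tau_\HH$ is exactly the first return time of this chain to the identity coset $\HH$, starting from $Y_0 = \HH$.

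The preamble to the lemma already records $\E[\tau_\HH] = [\GG:\HH] < \infty$, so $\HH$ is a recurrent state of $(Y_t)_t$. From this one deduces that every coset $y$ reachable from $\HH$ in the chain satisfies $\Pr_y[\tau_\HH < \infty] = 1$: otherwise, one could extract an $\HH$-avoiding path from $\HH$ to $y$ (by trimming a shortest $\HH$-to-$y$ path at its last visit to $\HH$) and concatenate it with a positive-probability $\HH$-avoiding trajectory out of $y$, yielding a positive-probability path from $\HH$ that never returns to $\HH$ and contradicting recurrence.

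Let $R$ be the (finite) set of cosets reachable from $\HH$. For each $y \in R$ pick $n_y \in \N$ with $\Pr_y[\tau_\HH \le n_y] > 0$, and set $N := \max_{y \in R} n_y$ and $q := \min_{y \in R} \Pr_y[\tau_\HH \le N]$; both are strictly positive since $R$ is finite. By the strong Markov property applied at time $kN$, on the event $\{\tau_\HH > kN\}$ the state $Y_{kN}$ lies in $R$, and hence the chain hits $\HH$ within the next $N$ steps with probability at least $q$. This yields
\[
\Pr[\tau_\HH > (k+1)N] \;\le\; (1-q)\,\Pr[\tau_\HH > kN],
\]
and iterating gives $\Pr[\tau_\HH > kN] \le (1-q)^k$, which is the required exponential tail. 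I do not anticipate a genuine obstacle: the argument simply packages the standard fact that a recurrent state of a finite Markov chain has exponentially decaying return-time tails, and neither symmetry nor smoothness of $\mu$ is needed beyond adaptedness (which ensures the relevant reachability, and in turn recurrence of $\HH$).
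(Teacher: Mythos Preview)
Your proof is correct and follows the same reduction as the paper: pass to the finite Markov chain $(\HH X_t)_t$ on $\HH\backslash\GG$ and identify $\tau_\HH$ as the return time to the coset $\HH$. The paper's proof is a one-liner from here---it observes that adaptedness makes this chain \emph{irreducible} (each generator acts as a permutation of the finite coset space, hence its inverse is a power of it, so the semigroup generated by the support already hits every coset) and then cites the standard fact that hitting times for irreducible finite-state chains have exponential tails. You instead pull recurrence from the preamble's $\E[\tau_\HH]=[\GG:\HH]<\infty$, restrict to reachable states, and spell out the geometric-decay argument explicitly; this is a correct and slightly more detailed version of the same standard fact, though note that the preamble's Kac-type identity is itself usually deduced \emph{from} irreducibility, so invoking irreducibility directly (as the paper does) is the cleaner logical route.
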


\begin{proof}
Because $\mu$ is adapted,
$(\HH X_t)_t$ is an {\em irreducible} Markov chain on the finite set $\HH \dom \GG$.
$\tau_\HH$ is the first time this chain returns to the coset $\HH$.
For any irreducible Markov chain on a finite set, hitting times always have an exponential tail.
\end{proof}

For  a recurrent subgroup $\HH$ the \define{hitting measure} is  the
a probability measure on $\HH$ defined by
$$\mu_\HH(x) = \Pr_1 [ X_{\tau_{\HH}} = x ].$$

Note that a measure $\mu$ on a metric group $\GG$ is \emph{smooth}  
if and only if the length of a $\mu$-random element of $\GG$ has an exponential tail.

\begin{lem}\label{lem:finite_index_preserves_exponential_decay}
Let $\mu$ be an adapted smooth measure on $\GG$ and
$\HH \leq \GG$ a subgroup of finite index.
The hitting measure $\mu_{\HH}$ is also a smooth measure.
\end{lem}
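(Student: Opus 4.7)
The plan is to reduce the claim to a quantitative comparison between the moment generating function of the step length and the exponential tail of the hitting time $\tau_\HH$, which is finite by Lemma \ref{lem:return_time_expoenetial}.

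First I would use the triangle inequality on the Cayley graph. If $X_t = S_1 \cdots S_t$ with $S_i$ i.i.d.-$\mu$, then $|X_t| \le \sum_{i=1}^{t} |S_i|$, and hence for any $\alpha > 0$,
$$ \E\bigl[ e^{\alpha |X_t|} \bigr] \le \E\Bigl[ \prod_{i=1}^{t} e^{\alpha |S_i|} \Bigr] = \vphi_\mu(\alpha)^t, $$
where $\vphi_\mu$ is the generating function from the definition of smoothness. By smoothness, $\vphi_\mu$ is finite in a neighborhood of $0$, and $\vphi_\mu(0) = 1$, so $\vphi_\mu(\alpha) \to 1$ as $\alpha \to 0^+$.

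Next, the showing that $\mu_\HH$ is smooth amounts to bounding $\E[e^{\alpha |X_{\tau_\HH}|}]$ for some $\alpha > 0$. I would decompose on the value of $\tau_\HH$ and apply Cauchy--Schwarz:
$$ \E\bigl[e^{\alpha|X_{\tau_\HH}|}\bigr] = \sum_{n \ge 1} \E\bigl[e^{\alpha|X_n|} \ind{\tau_\HH = n}\bigr] \le \sum_{n \ge 1} \E\bigl[e^{2\alpha |X_n|}\bigr]^{1/2} \Pr[\tau_\HH = n]^{1/2}. $$
By Lemma \ref{lem:return_time_expoenetial}, there exist $C,c>0$ with $\Pr[\tau_\HH = n] \le C e^{-cn}$. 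Combined with the bound on $\E[e^{2\alpha|X_n|}]$ above, the sum is dominated by
$$ C^{1/2} \sum_{n \ge 1} \bigl( \vphi_\mu(2\alpha)^{1/2} e^{-c/2} \bigr)^n. $$

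The main (and only) delicate point is to choose $\alpha$. Since $\vphi_\mu(2\alpha) \to 1$ as $\alpha \to 0^+$, one can pick $\alpha > 0$ small enough so that $\vphi_\mu(2\alpha) < e^{c}$, making the geometric series converge. This gives $\E[e^{\alpha|X_{\tau_\HH}|}] < \infty$, which is exactly smoothness of $\mu_\HH$. The argument uses nothing about $\HH$ beyond the exponential tail of $\tau_\HH$, so it would in fact work for any recurrent subgroup whose hitting time has an exponential tail, not just finite index ones.
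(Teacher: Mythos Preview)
Your proof is correct and in fact more direct than the paper's. The paper worries explicitly about the dependence between $\tau_\HH$ and the increments $(S_t)_t$, and resolves it by building an auxiliary coupling: it decomposes each step according to the coset transition it induces, obtaining jointly independent variables $S_t(i,j)$ and an independent finite-state chain $(\xi_t)$ on $\HH\backslash\GG$, so that $|X_{\tau_\HH}|$ is stochastically dominated by $\sum_{t=1}^{\tau_\HH}\hat S_t$ with $\hat S_t=\max_{i,j}|S_t(i,j)|$ now genuinely independent of $\tau_\HH$. Only then does it carry out the same endgame you do (choose $\alpha$ small so that $\E[e^{\alpha\hat S_1}]<e^{\gamma}$).

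Your Cauchy--Schwarz step sidesteps all of that machinery: by splitting $\E[e^{\alpha|X_n|}\ind{\tau_\HH=n}]\le \E[e^{2\alpha|X_n|}]^{1/2}\Pr[\tau_\HH=n]^{1/2}$ you decouple the two factors at the cost of doubling the exponent, and then the unconditional bound $\E[e^{2\alpha|X_n|}]\le\vphi_\mu(2\alpha)^n$ uses only the i.i.d.\ structure of the steps. This is shorter, uses no coset bookkeeping, and---as you observe---needs only that $\tau_\HH$ has an exponential tail, not the finite-index hypothesis per se. The paper's coupling has the minor advantage of giving a stochastic-domination statement rather than a moment bound, but for the present purpose your argument is cleaner.
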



\begin{proof}
Let $Z=Z_\HH := S_1 \cdot S_2 \cdots S_{\tau_\HH}$, 
with $S_1,S_2,\ldots$
independently and identically distributed according to $\mu$.
Note that for non-negative random variables, the property of having exponential tail is
monotone with respect to stochastic domination.
Clearly, $|Z| \le \sum_{k=1}^{\tau_\HH} |S_k|$. However, $\tau_\HH$ is not necessarily independent from $(|S_t|)_{t=1}^{\infty}$.
We overcome this dependence as follows:

Let $x_1,\ldots,x_N \in \GG$
be a set of representatives for right $\HH$-cosets with $x_1 = 1_\GG$,
so $\GG = \biguplus_{j=1}^N \HH x_j$ where $N = [\GG:\HH]$.
Define a family of jointly independent  $\GG$-valued random variables $$(S_t(i,j) \ : \ t \in \N \ , \ i,j \in \set{1,\ldots,N} )$$
as follows: $S_t(i,j)$ is distributed according to $\mu$ conditioned on the event that
$S_t \in x_i^{-1} \HH x_j$
(if this event has zero measure with respect to $\mu$ we make the arbitrary definition $S_t(i,j)=1$).
Verify that $Z$ is equal in distribution to a random variable of the following form:
$$\prod_{k=1}^{\tau_\HH} S_{k}(\xi_{k-1},\xi_{k}),$$
Where $\xi_0=1$ and $\xi_1,\xi_2,\xi_3$ are a sequence of random variables taking values in $\set{1,\ldots,N}$, whose law is determined by the finite state Markov chain
$$\Pr[\xi_{t+1} = j \ | \   \xi_t=i] = \mu\left( x_i^{-1} \HH x_j \right).$$

Since $(S_t)_t$ all have an exponential tail, it follows that $(S_t(i,j))_{t,i,j}$ also have an exponential tail.
Let $\hat{S}_t := \max_{i,j} |S_t(i,j)|$. Then $\hat{S}_t$ also has an exponential tail, and
$|Z|$ is stochastically dominated by $\sum_{t=1}^{\tau_\HH} \hat{S}_t$.
Now, note that $\tau_\HH$ is equal in distribution to $\inf \set{ t\geq 1 \ : \ \xi_t = 1 }$, and also,
$(\hat{S}_t)_t$ are independent of $(\xi_t)_t$.
Since $\tau_\HH, \hat{S}_1, \hat{S}_2,\ldots$
have exponential tails and are jointly independent, it follows that
$$ |Z| \textrm{ is stochastically dominated by } W : = \sum_{t=1}^{\tau_\HH} \hat S_t , $$
where $(\hat S_t)_t ,\tau_{\HH}$ are all independent $\Z_+$-valued random variables,
$(\hat S_t)_t$ are i.i.d  and all have an exponential tail.

We are left with showing that there exists $\alpha > 0$ such that $\E [ e^{\alpha W} ] < \infty$.

We know that since $(\hat S_t)_t$ all have an exponential tail, there exists $\beta > 0$ such that $\E[e^{\beta \hat S_t} ] < \infty$.
Similarly for $\tau_\HH$, there exists $\gamma >0$ such that $\E [ e^{\gamma \tau_\HH} ] < \infty$.
Dominated convergence guaranties that $\E [e^{\beta \hat S_t} ]$ is
continuous in $\beta$ and $\E [e^{\beta \hat S_t} ] \to 1$ as $\beta \to 0$.
Thus, we may choose $\alpha > 0$ small enough so that $\E [ e^{\alpha \hat S_t} ] < e^{\gamma}$.
With this choice we have by independence of $\tau_\HH, (\hat S_t)_t$,
\begin{align*}
\E [ e^{\alpha W} ] & = \sum_{k=0}^{\infty} \Pr[ \tau_\HH = k ] \cdot \prod_{t=1}^k \E [ e^{\alpha \hat S_t} ]
\leq \sum_{k=0}^{\infty} \Pr[ \tau_\HH = k ] \cdot e^{\gamma k}  = \E[ e^{\gamma \tau_\HH} ] < \infty .
\end{align*}
\end{proof}

Another property we wish to explore is the relation between $\HF_k(\GG,\mu)$ and $\HF_k(\HH,\mu_\HH)$.

\begin{lem} \label{lem: moments of Xt}
Let $\mu$ be a courteous probability measure on $\GG$.
Let $(X_t)_t$ be a $\mu$-random walk on $\GG$. 
Then, for any $k$ there exists a constant $C = C_k$
such that for every $t \geq 0$,
\begin{equation}\label{eq:X_t_moment}
 \E [ |X_t|^k] \leq C_k \cdot  t^k . 
\end{equation}

Consequently, for any $x \in \GG$,
\begin{equation}\label{eq:X_t_moment_x}
\E_x [ |X_t|^k ] \leq C_k \cdot (t+|x|)^k . 
\end{equation}
\end{lem}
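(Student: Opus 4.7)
The plan is to reduce everything to bounding moments of the step length, which is finite because $\mu$ is smooth. Since $X_t = S_1 S_2 \cdots S_t$ with $S_1, \dots, S_t$ i.i.d.-$\mu$, the triangle inequality for the word metric gives
$$|X_t| \;\leq\; \sum_{i=1}^t |S_i|.$$
Because $\mu$ is courteous, hence smooth, the generating function $\vphi_\mu(\zeta)$ is finite for some $\zeta > 0$, so $|S_1|$ has exponential tail and in particular every polynomial moment $m_k := \E[|S_1|^k]$ is finite.

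Then I would apply the elementary power-mean (or Jensen) inequality $\left(\sum_{i=1}^t a_i\right)^k \leq t^{k-1} \sum_{i=1}^t a_i^k$ together with linearity of expectation:
$$\E[|X_t|^k] \;\leq\; t^{k-1} \sum_{i=1}^t \E[|S_i|^k] \;=\; t^k \cdot m_k,$$
which yields \eqref{eq:X_t_moment} with $C_k = m_k$.

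For \eqref{eq:X_t_moment_x}, use left-invariance of the metric: under $\Pr_x$, $X_t = x \cdot W_t$ where $W_t$ has the same distribution as $X_t$ under $\Pr_1$, so $|X_t| \leq |x| + |W_t|$. Expanding via the binomial theorem and using \eqref{eq:X_t_moment} for each moment of $|W_t|$,
$$\E_x[|X_t|^k] \;\leq\; \E\bigl[(|x| + |W_t|)^k\bigr] \;=\; \sum_{j=0}^k \binom{k}{j} |x|^{k-j} \E[|W_t|^j] \;\leq\; \sum_{j=0}^k \binom{k}{j} |x|^{k-j} C_j \, t^j.$$
Bounding each $C_j$ by $\max_{0 \le j \le k} C_j$ and recognizing the resulting sum as (a constant times) $(|x|+t)^k$ via the binomial formula yields \eqref{eq:X_t_moment_x} after redefining $C_k$. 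There is no real obstacle here; the only point that requires the courteous hypothesis is the finiteness of all polynomial moments of $|S_1|$, which is immediate from smoothness since exponential tail implies all polynomial moments are finite.
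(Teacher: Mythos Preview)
Your proof is correct and follows essentially the same strategy as the paper: bound $|X_t|$ by $\sum_{i=1}^t |S_i|$ via the triangle inequality, use smoothness to get all moments of $|S_1|$ finite, and then handle the shifted start by the binomial expansion of $(|x|+|W_t|)^k$. The only difference is in how you bound $\E\bigl[(\sum_i |S_i|)^k\bigr]$: the paper expands the $k$-th power as a sum over $\vec{j}\in\{1,\dots,t\}^k$ and bounds each mixed moment $\E\bigl[\prod_i |S_{j_i}|\bigr]$ by $\max_{1\le n\le k}(\E[|S_1|^n])^{k/n}$, whereas you apply Jensen's inequality directly to get $t^{k-1}\sum_i \E[|S_i|^k]=m_k\,t^k$. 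Your route is slightly cleaner and yields the same constant (indeed, by Lyapunov the paper's maximum is attained at $n=k$, giving $C_k=m_k$ as well).
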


\begin{proof}
By the triangle inequality we have 
$|X_t| \le |X_0|+ \sum_{j=1}^t |S_j|$, where $S_j = X_{j-1}^{-1} X_j$ is the jump at time $j$.
So
\begin{equation}\label{eq:X_t_moment2}
\E [ |X_t|^k]   \le \E \left[ \left(\sum_{j=1}^t |S_j|\right)^k\right] =
 \sum_{\vec{j} \in \{1,\ldots,t\}^k} \E [\prod_{i=1}^k |S_{j_i}|].
\end{equation}
Let $$ C_k = \max_{ 1 \le n \le k} \left(\E [|S_1|^n]\right)^{k/n} . $$
It follows that
\begin{equation}\label{eq:X_t_moment3}
\E [\prod_{i=1}^k |S_{j_i}|] \le C_k \mbox{ for all } \vec{j} \in \{1,\ldots,t\}^k.
\end{equation}

Thus \eqref{eq:X_t_moment} follows from \eqref{eq:X_t_moment2} and \eqref{eq:X_t_moment3}.

To prove \eqref{eq:X_t_moment_x},  note that
\begin{align*}
\E_x |X_t|^k & = \E |xX_t|^k \leq \E [ (|x|+|X_t|)^k ] \\
& = \sum_{j=0}^k {k \choose j} |x|^j \cdot \E |X_t|^{k-j} \leq \sum_{j=0}^k {k \choose j} |x|^j \cdot C_{k-j} t^{k-j} .
\end{align*}
Taking $C'_k = \max_{j \leq k} C_j$ we get that
$$ \E_x |X_t|^k \leq C'_k \cdot (t+|x|)^k . $$
\end{proof}

The following lemma shows that $\mu$-harmonic functions on $\GG$ correspond bijectively to $\mu_\HH$-harmonic functions on $\HH$:
\begin{prop}\label{prop:restriction_of_harmonic}
Let $\GG$ be a finitely generated group, $\mu$ a courteous measure on $\GG$ and $\HH \leq \GG$ a subgroup of finite index. For any $k  \ge 0$, the restriction of any $f \in \HF_k(\GG,\mu)$ to $\HH$ is $\mu_{\HH}$-harmonic and in $\HF_k(\HH,\mu_{\HH})$. Conversely, any $\tilde f \in \HF_k(\HH,\mu_\HH)$ is the restriction of a unique $f \in \HF_k(\GG,\mu)$.
Thus, the restriction map is a linear bijection from  $\HF_k(\GG,\mu)$ to $\HF_k(\HH,\mu_\HH)$.
\end{prop}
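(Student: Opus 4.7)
The plan is to construct the inverse to the restriction map explicitly, via the hitting distribution, and to control both directions through the optional stopping theorem applied to the martingale $M_t = f(X_t)$ at the stopping time $\tau^*_\HH := \inf\set{t \geq 0 : X_t \in \HH}$. The main technical input is a uniform integrability estimate that reuses the moment bounds established in the previous lemmas.

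\smallskip

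For the forward direction, I take $f \in \HF_k(\GG,\mu)$ and aim to show $\E_x[f(X_{\tau_\HH})] = f(x)$ for all $x \in \HH$, which is exactly $\mu_\HH$-harmonicity of $f|_\HH$ (after noting that $\mu_\HH$ is itself symmetric by a path-reversal argument using symmetry of $\mu$, so that the convolution $f|_\HH \ast \mu_\HH$ matches the conditional expectation). To apply OST I need uniform integrability of $(M_{t\wedge \tau_\HH})_t$. Using $|f(y)| \leq C(1+|y|^k)$ and the trivial bound $|X_{t\wedge \tau_\HH}| \leq |x|+W$ with $W := \sum_{j=1}^{\tau_\HH} |S_j|$, it suffices to establish $\E W^k < \infty$. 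This is obtained by re-running the decoupling construction of Lemma \ref{lem:finite_index_preserves_exponential_decay}: $W$ is stochastically dominated by $\sum_{j=1}^{\tau_\HH} \hat S_j$ with $(\hat S_j)_j$ i.i.d.\ of exponential tail and jointly independent of $\tau_\HH$, while $\tau_\HH$ itself has exponential tail by Lemma \ref{lem:return_time_expoenetial}. Polynomial growth of $f|_\HH$ in the word metric of $\HH$ is inherited from that of $f$ through the bi-Lipschitz inequality $|h|_\GG \leq C|h|_\HH$, which holds once one picks a finite symmetric generating set of $\HH$ contained in a sufficiently large $\GG$-ball.

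\smallskip

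For the backward direction, given $\tilde f \in \HF_k(\HH,\mu_\HH)$, I define the extension
$$ f(x) := \E_x\br{\tilde f(X_{\tau^*_\HH})}. $$
The same bound $\E W^k < \infty$ combined with $|X_{\tau^*_\HH}| \leq |x| + W$ shows $f$ is well-defined and that $\|f\|_k < \infty$. For $\mu$-harmonicity I use a one-step strong Markov argument: for $x \notin \HH$ conditioning on the first jump $S_1$ gives $f(x) = \sum_s \mu(s) f(xs)$ directly (since $\tau^*_\HH \geq 1$ from outside $\HH$), and for $x \in \HH$ (where $f = \tilde f$) the same conditioning gives $\sum_s \mu(s) f(xs) = \E_x[\tilde f(X_{\tau_\HH})] = \tilde f(x)$, the last equality being the $\mu_\HH$-harmonicity of $\tilde f$. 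Finally, $f|_\HH = \tilde f$ is trivial since $\tau^*_\HH = 0$ when $X_0 \in \HH$. Injectivity of the restriction map closes the loop via the same OST argument: any $g \in \HF_k(\GG,\mu)$ with $g|_\HH = 0$ satisfies $g(x) = \E_x[g(X_{\tau^*_\HH})] = 0$. The main obstacle throughout is exactly this uniform integrability step — since $k \geq 1$ the functions are unbounded, so a bounded-function OST is unavailable, and the smoothness portion of the courteous hypothesis is what makes $\E W^k$ finite via the exponential tails of $\tau_\HH$ and of the decoupled jumps.
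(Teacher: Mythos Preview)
Your proof is correct and follows the same architecture as the paper: define the extension by $f(x) = \E_x[\tilde f(X_{\tau^*_\HH})]$, and obtain both harmonicity of the restriction and injectivity via optional stopping applied to the martingale $f(X_t)$. The one substantive difference is in how you establish the uniform integrability / finiteness step. The paper expands $\E_x[|X_\tau|^k]$ via a telescoping sum $\sum_t \mathbf{1}_{\{\tau>t\}}(|X_{t+1}|^k - |X_t|^k)$, then uses the moment bound $\E_x|X_t|^{2(k-j)} \leq C(|x|+t)^{2(k-j)}$ from Lemma~\ref{lem: moments of Xt} together with Cauchy--Schwarz and the exponential tail of $\tau_\HH$. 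Your route is more direct: dominate $|X_{t\wedge\tau_\HH}|$ by $|x| + W$ with $W = \sum_{j\le\tau_\HH}|S_j|$, and observe that the decoupling already carried out in Lemma~\ref{lem:finite_index_preserves_exponential_decay} shows $W$ itself has an exponential tail (hence all moments). This bypasses Lemma~\ref{lem: moments of Xt} entirely and gives a single dominating $L^1$ variable rather than a uniform-in-$t$ estimate; it is shorter and conceptually cleaner, at the cost of re-invoking the coset decoupling. Both approaches rest on the same two ingredients (exponential tail of $\tau_\HH$ and smoothness of $\mu$), so the difference is one of packaging rather than of strategy.
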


\begin{proof}
{\bf Step I: Extension.}
Let $\tilde f \in \HF_k(\HH,\mu_{\HH})$.  Define $f:\GG \to \C$ by
$$ f(x) : = \E_x [ \tilde f (X_{\tau }) ] $$
where $\tau = \tau_\HH$ is the return time to $\HH$ and $(X_t)_t$ is a $\mu$-random walk on $\GG$.

We now wish to show that $f$ is well-defined (equivalently, the expectation $\E_x [ |\tilde f (X_{\tau })|]$ is finite), and that  $f \in \HF_k(\GG,\mu)$.

Note that since $[\GG : \HH] < \infty$ and $\GG$ is finitely generated so is $\HH$.
Also,  with resect to any choices of finite symmetric generating sets $\GG = \IP{S}$ and $\HH = \iP{\tilde S}$ the corresponding metrics
are bi-Lipschitz. Namely, there exist $C>1$ so that for any $x \in \HH$ we have that $C^{-1} |x|_{S} \leq |x|_{\tilde S} \leq C \cdot |x|_{S}$ (see for instance  Corollary $24$ on page $89$ of \cite{de_la_harpe_geom_group_book}).
Thus, since $X_\tau \in \HH$ and $\tilde f \in \HF_k(\HH,\mu_{\HH})$,
we have that $|\tilde f(X_\tau) | \leq C \cdot |X_\tau|^k$ for some constant $C>0$.

Now, for any $x \in \GG$,
since $\set{ \tau > t } = \set{ \tau \leq t }^c \in \F_t : = \sigma(X_0,\ldots,X_t)$,
and since $S_{t+1} : = X_t^{-1} X_{t+1}$ is independent of $\F_t$,
we get that
\begin{align*}
|f(x)| & \leq C \cdot \E_x [ |X_{\tau} |^k ] = C |x|^k + C \cdot \sum_{t=0}^{\infty} \E_x [ \1{ \tau > t } \cdot (|X_{t+1}|^k - |X_t|^k)]  \\
& \leq C |x|^k + C \cdot \sum_{t=0}^\infty \E_x [ \1{ \tau>t} \cdot ( (|X_t| + |S_{t+1}|)^k - |X_t|^k ) ]
\\
& = C |x|^k + C \cdot \sum_{t=0}^{\infty} \sum_{j=1}^k {k \choose j} \E [|S_{t+1}|^j] \cdot \E_x [ \1{\tau > t } |X_t|^{k-j} ] .
\end{align*}
Using Lemma \ref{lem: moments of Xt} we have that there exists some constant $C_k>0$ such that
\begin{align*}
\big( \E_x [ \1{\tau > t } |X_t|^{k-j} ] \big)^2 & \leq \Pr_x [ \tau > t ] \cdot \E_x [ |X_t|^{2(k-j)} ] \\
& \leq \Pr_x [\tau >t ] \cdot C_k \cdot (|x|+t)^{2(k-j)} .
\end{align*}
Since $\tau$ has an exponential tail, setting $M_k = \max_{j \leq k} \E [|S_{t+1}|^j]$, we have
\begin{align*}
|f(x)| & \leq C |x|^k + C \cdot C_k \cdot M_k \cdot \sum_{t=0}^{\infty} e^{-c t} \sum_{j=1}^k {k \choose j} (|x|+t)^{k-j} \\
& = C |x|^k + C \cdot C_k \cdot M_k \cdot \sum_{t=0}^\infty e^{-ct} ((|x|+t+1)^k - (|x|+t)^k)  = O(|x|^k) .
\end{align*}
This proves that $f$ is well-defined and that $\|f\|_k <\infty$. A straightforward examination of the definitions reveals that $f$ is then $\mu$-harmonic on $\GG$, and $f \big|_\HH \equiv \tilde f$. Thus
$f \in \HF_k(\GG,\mu)$.

To recap:  for every $\tilde f \in \HF_k(\HH,\mu_\HH)$ the extension $f(x) : = \E_x[ \tilde f(X_\tau)]$ is
a function $f \in \HF_k(\GG,\mu)$.

{\bf Step II: Restriction.}
To show that this is indeed a unique extension, it suffices to show that if $f \big|_\HH \equiv 0$
and $f \in \HF_k(\GG,\mu)$ then $f \equiv 0$ on all of $\GG$.
Indeed, if $f \in \HF_k(\GG,\mu)$ then $(f(X_t))_t$ is a martingale.

Also, $(f(X_{\tau \mn t}))_t$ is uniformly integrable:
\begin{align*}
\E_x [ |f(X_{\tau \mn t})| ] & \leq C \cdot \E_x [ |X_{\tau \mn t}|^k ]
= C \cdot \E_x [ |X_\tau |^k \1{\tau \leq t} ] + C \cdot \E_x [ |X_t|^k \1{ \tau >t } ] \\
& \leq C \cdot \E_x [|X_\tau|^k] + C \cdot \E_x [ |X_t|^k \1{ \tau > t} ] .
\end{align*}
We have already seen above that $\E_x [ |X_\tau|^k ] \leq C_k |x|^k$ for some $C_k>0$.
Also, Lemma \ref{lem: moments of Xt} guaranties that because $\tau$ has an exponential tail,
\begin{align*}
\big( \E_x [ |X_t|^k \1{ \tau >t } ] \big)^2 & \leq \Pr_x [ \tau >t ] \cdot \E_x [ |X_t|^{2k} ]
\leq e^{-ct} \cdot C_{2k} \cdot (|x|+t)^{2k} .
\end{align*}
Thus, we obtain that
$$ \sup_t \E_x [ |f(X_{\tau \mn t})| ] \leq C \cdot C_{k} \cdot |x|^k + C \cdot C_{2k} \cdot \sup_t e^{-ct} (|x|+t)^k
= O(|x|^k) . $$
So $(f(X_{\tau \mn t}))_t$ is uniformly integrable indeed.

Thus, we may apply the Optional Stopping Theorem  \cite[\S 5.7]{Durrett} to obtain $\E_x [ f(X_{\tau}) ] = f(x) $.
Hence, if $f(x) = 0$ for all $x \in \HH$, then $f(X_\tau) = 0$ and so $f \equiv 0$ on all of $\GG$.

This shows that the linear map $f \mapsto f \big|_\HH$ is injective on $\HF_k(\GG,\mu)$.
\end{proof}

\subsection{The reduction}
\label{scn: reduction}

We will now complete the proof of Theorem \ref{thm: main thm} assuming Theorem \ref{thm:2x2}. 

The dimension of $\HF_k$ can only decrease when passing to a quotient group: Whenever $\GG/N$ is a  quotient of $\GG$,
we have
$\dim \HF_k(\GG/N,\mu \circ \pi^{-1} ) \le \dim \HF_k(\GG,\mu)$ where $\pi:\GG \to \GG/N$ is the canonical projection.
Also, by Proposition \ref{prop:restriction_of_harmonic}, restricting to a finite index subgroup does not change the dimension of $\HF_k$. 
Thus Theorem \ref{thm: main thm} follows from the following proposition.

\begin{prop}[see \cite{Breuillard06}]
\label{prop:reduction_to_affine}
 Let  $\GG$  be a finitely generated solvable group which is not virtually nilpotent.
Then there exists a quotient  $\GG/N$  of $\GG$ which  is not virtually nilpotent, and has a finite index subgroup $\HH < \GG/N$ which embeds in $\AA(\FF)$ for some field $\FF$.
\end{prop}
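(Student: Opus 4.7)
The plan is to invoke Groves' theorem (Theorem \ref{thm: Groves}) as a first reduction, and then to realize the resulting metabelian-like quotient inside the affine group over a suitable field by exploiting the $\Z[Q]$-module structure of its abelianization.

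First, I would apply Groves' theorem to the finitely generated solvable non-virtually-nilpotent group $\GG$, producing a normal subgroup $N_1 \triangleleft \GG$ such that $\bar\GG_1 := \GG/N_1$ is still not virtually nilpotent and has a metabelian structure: after passing to a finite-index subgroup we get a short exact sequence
\begin{equation*}
1 \to A \to \GG_1 \to Q \to 1
\end{equation*}
with $A,Q$ finitely generated abelian and $Q$ acting on $A$ by conjugation. Because virtual nilpotency is a commensurability invariant, this passage to a finite-index subgroup does not affect the hypothesis.

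Next, the conjugation action of $Q$ on $A$ turns $A$ into a finitely generated $\Z[Q]$-module. Since $\bar\GG_1$ is not virtually nilpotent, the $Q$-action on $A$ cannot be ``nilpotent'' in the ring-theoretic sense, so by Noetherianity of $\Z[Q]$ (Hilbert basis theorem) and a localization argument there exists a prime ideal $\mathfrak{p} \subset \Z[Q]$ in the support of $A$ and a finite-index subgroup $Q_0 \leq Q$ such that the induced character $\chi : Q_0 \to \FF^{\times}$, with $\FF := \mathrm{Frac}(\Z[Q]/\mathfrak{p})$, has infinite image. Taking the normal subgroup $N \supseteq N_1$ of $\GG$ corresponding to $\mathfrak{p} A$ together with the pullback of $Q_0$, we obtain a quotient $\GG/N$ whose finite-index subgroup $\HH \cong (A/\mathfrak{p} A) \rtimes Q_0$ embeds in $\AA(\FF)$ via $(a,q) \mapsto (x \mapsto \chi(q) x + a)$. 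Infiniteness of $\chi(Q_0)$ ensures that $\HH$, and hence $\GG/N$, is not virtually nilpotent, as the image contains an element acting by multiplication by a non-root-of-unity scalar.

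The main obstacle is the existence of such a prime $\mathfrak{p}$: if every character $\chi$ arising in this way had finite image, the $\Z[Q]$-module $A$ would be supported on cyclotomic primes and $\bar\GG_1$ would turn out to be virtually nilpotent, contradicting the conclusion of Groves' theorem. This non-vanishing is essentially what Groves' theorem provides (combined with standard commutative-algebra facts about the spectrum of $\Z[Q]$); once it is granted, the construction of $N$, $\HH$, and the affine embedding is routine bookkeeping.
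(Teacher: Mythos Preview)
You have misread Groves' theorem (Theorem~\ref{thm: Groves}). As stated in the paper, its \emph{hypothesis} is that the group be \emph{just} non-virtually-nilpotent (i.e.\ every proper quotient is virtually nilpotent), and its \emph{conclusion} is already precisely the desired affine embedding of a finite-index subgroup --- not merely a metabelian short exact sequence. Consequently your first step is illegitimate (Groves' theorem does not apply directly to an arbitrary non-virtually-nilpotent solvable $\GG$, and its output is not ``a quotient with a metabelian structure''), and your second step is redundant (once Groves' theorem is correctly invoked there is nothing left to construct).

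The paper's argument is accordingly much shorter than yours: one first observes that any finitely generated non-virtually-nilpotent group has a just-non-virtually-nilpotent quotient $\GG/N$ (a standard Zorn-type fact, cited from~\cite{Breuillard06}); this quotient is still finitely generated and solvable, so Groves' theorem applies to it directly and furnishes $\HH \leq \GG/N$ of finite index embedding in $\AA(\FF)$. Your module-theoretic construction in the second paragraph is in effect a sketch of the \emph{proof} of Groves' theorem itself, and even as such has gaps: for instance, $A/\mathfrak{p}A$ is a module over the domain $\Z[Q]/\mathfrak{p}$, not a priori a subgroup of its fraction field $\FF$, so the map $(a,q)\mapsto (x\mapsto \chi(q)x+a)$ need not be well-defined and injective without a further torsion-free/rank-one reduction that you do not carry out.
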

This precise reduction is carried out  in \cite{Breuillard06}. We provide a few details.
The argument is based on the following theorem of Groves:

\begin{thm}[Groves \cite{Groves78}] \label{thm: Groves}
Let $G$ be a finitely generated solvable group that is just non virtually nilpotent
(that is, for any non-trivial normal subgroup $N \ns G$ we have that $G/N$ is virtually nilpotent).
Then there exists a finite index subgroup $[G:H] < \infty$ such that $H$ is isomorphic to
a subgroup of the affine group over a field $K$.
\end{thm}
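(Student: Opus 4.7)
The plan is to combine Groves' theorem (Theorem~\ref{thm: Groves}) with a Zorn's Lemma argument that produces a quotient of $\GG$ which is just non virtually nilpotent (JNVN) and is itself still not virtually nilpotent. Groves' theorem then supplies the desired affine embedding of a finite index subgroup of that quotient. Note that any quotient of $\GG$ inherits finite generation and solvability, so only the JNVN property together with non-nilpotency needs to be engineered.

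Let $\mathcal{P}$ denote the poset of normal subgroups $N \ns \GG$ with $\GG/N$ not virtually nilpotent, ordered by inclusion. Since $\GG$ itself is not virtually nilpotent, $\{1\} \in \mathcal{P}$. The main step is verifying that any chain $(N_\alpha)_\alpha$ in $\mathcal{P}$ has an upper bound in $\mathcal{P}$, namely $N^* := \bigcup_\alpha N_\alpha$. Suppose, for contradiction, that $\GG/N^*$ is virtually nilpotent: then there is a finite index subgroup $H \leq \GG$ with $N^* \subseteq H$ such that $H/N^*$ is nilpotent of some class $c$. Since $\GG$ is finitely generated and $[\GG:H]<\infty$, $H$ is finitely generated, say by $h_1,\ldots,h_r$. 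The subgroup $\gamma_{c+1}(H)$ is generated as a normal subgroup of $H$ by the finite set $C$ of simple left-normed commutators of weight $c+1$ in the $h_i$, and $H/N^*$ being nilpotent of class $\leq c$ is equivalent to $C \subseteq N^*$. Because $C$ is finite and the $N_\alpha$ form a chain, $C \subseteq N_{\alpha_0}$ for some index $\alpha_0$; using normality of $N_{\alpha_0}$ in $\GG$, this gives $\gamma_{c+1}(H) \subseteq N_{\alpha_0}$. Therefore $H/N_{\alpha_0}$ is nilpotent of class $\leq c$ and $[\GG/N_{\alpha_0} : H/N_{\alpha_0}] = [\GG:H] < \infty$, making $\GG/N_{\alpha_0}$ virtually nilpotent, contradicting $N_{\alpha_0} \in \mathcal{P}$.

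Zorn's Lemma then provides a maximal element $N \in \mathcal{P}$. Maximality says every normal subgroup strictly containing $N$ gives a virtually nilpotent quotient of $\GG$, i.e.\ every non-trivial quotient of $\GG/N$ is virtually nilpotent, so $\GG/N$ is JNVN, while by construction $\GG/N$ is itself not virtually nilpotent. Applying Groves' theorem (Theorem~\ref{thm: Groves}) to $\GG/N$ delivers a finite index subgroup $\HH \leq \GG/N$ isomorphic to a subgroup of $\AA(\FF)$ for some field $\FF$, as required. The main obstacle is the chain step: the argument rests on the fact that virtual nilpotency is witnessed by a \emph{finite} amount of data (finite index plus triviality of finitely many commutators), and so cannot first arise in the limit of a strictly increasing chain of ``bad'' normal subgroups. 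Finite generation of $\GG$ is essential throughout.
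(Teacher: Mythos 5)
You have not proved the statement in question. The target is Theorem~\ref{thm: Groves} itself --- Groves' structure theorem, which asserts that a finitely generated solvable group that is just non virtually nilpotent has a finite-index subgroup embedding in the affine group over some field. Your argument invokes Groves' theorem as a known input (``combine Groves' theorem with a Zorn's Lemma argument,'' ``Applying Groves' theorem to $\GG/N$ delivers\ldots''), so it cannot serve as a proof of that theorem: it is circular. There is simply no place in your proposal where the structural conclusion --- the existence of the affine embedding --- is actually established.

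What you have actually written out is a proof of the paper's Proposition~\ref{prop:reduction_to_affine}: the reduction from an arbitrary finitely generated solvable group that is not virtually nilpotent to a JNVN quotient, followed by an application of Groves' theorem. That portion is correct, and your Zorn's-lemma step is the interesting part: the union $N^*$ of a chain of normal subgroups with non-virtually-nilpotent quotient cannot itself have virtually nilpotent quotient, since virtual nilpotency of $\GG/N^*$ would be witnessed by the vanishing in $\GG/N^*$ of a finite set of weight-$(c+1)$ commutators of a finitely generated finite-index subgroup, and so would already hold modulo some $N_{\alpha_0}$ in the chain. This is exactly the content of ``Claim 2'' of \cite{Breuillard06} that the paper cites in its outline of Proposition~\ref{prop:reduction_to_affine}. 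But Theorem~\ref{thm: Groves} itself admits no such soft, compactness-style argument; it is a hard structure theorem about JNVN finitely generated solvable groups requiring genuine group-theoretic work (analysis of a minimal normal subgroup, or plinth, its centralizer, and the induced scalar action producing the affine embedding). The paper does not reproduce that argument and simply cites \cite{Groves78} and \cite[Section~4]{Breuillard06}; your proposal does not supply it either.
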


 A proof  of Theorem \ref{thm: Groves} also appears in  \cite[Section 4]{Breuillard06}.

\begin{proof}[Outline of proof of Proposition \ref{prop:reduction_to_affine}]
We assume that $\GG$ is solvable but not virtually nilpotent, and $\mu$ a courteous measure on $\GG$. 

Every finitely generated non virtually nilpotent group has a just non virtually nilpotent quotient
(see \eg Claim 2 in the beginning of Section 5 of \cite{Breuillard06}).

So let $\GG/N$ be a  just non virtually nilpotent quotient of $\GG$.
Since $\GG$ is finitely generated and solvable, so is $\GG/N$.
By Theorem \ref{thm: Groves}  $\GG/N$ has  a finite index subgroup $\HH$ that is isomorphic
to a subgroup of the affine group over some field $\FF$.
\end{proof}

We now complete the section with

\begin{proof}[Proof of Theorem \ref{thm: main thm} assuming Theorem \ref{thm:2x2}]
Let $\GG$ be a finitely generated virtually solvable group, $\mu$ a courteous measure on $\GG$ and
assume that $\GG$ is not virtually nilpotent.

By Proposition \ref{prop:reduction_to_affine},
there exists $N \ns \GG$ and a finite index subgroup $\HH < \GG/N$ such that $\GG/N$ is not virtually nilpotent 
and $\HH$ is isomorphic
to a subgroup of $\AA(\FF)$ for some field $\FF$.
Since $\HH$ is finite index in a quotient of $\GG$, we have by Proposition \ref{prop:restriction_of_harmonic}
that $\dim \HF_1(\HH, \nu) \leq \dim \HF_1(\GG,\mu)$, for some courteous measure $\nu$
(obtained by projecting $\mu$ from $\GG$ to $\GG/N$ and then taking the induced hitting measure on $\HH$).
Because $\GG/N$ is not virtually nilpotent, it cannot be that $\HH$ is virtually abelian.
Theorem \ref{thm:2x2} now tells us that $\dim \HF_1(\GG,\mu) \geq \dim \HF_1(\HH,\nu) = \infty$.
\end{proof}

\section{Random walks on the reals}
\label{scn: RW on reals}

In this section we collect probability estimates and results regarding random walks on the real line. These estimates will serve us in the following section to prove Theorem \ref{thm:2x2}.
As this is not the main focus of this paper, we omit proofs for some standard statements.

In the following, $(Y_t)_t$ is a sequence of real valued
random variables such that $Z_t : = (Y_{t}-Y_{t-1})_t$ are i.i.d.\ symmetric
random variables of mean $0$. Thus, $(Y_t)_t$ is a martingale.  
We assume that the random variables $Z_t$ have an exponential tail; that is, there exists $\eps>0$ such that 
$\E[e^{\eps |Z_t|}] < \infty$.

For any set $A \subset \R$, $\tau_A$ is the \define{hitting time} of $A$ and $\sigma_A$ the \define{exit time} of $A$, \ie
$$ \tau_A = \inf \set{ t \ge 0 \ : \ Y_t \in A} \AND \sigma_A = \tau_{\R \setminus A} . $$
For $r>0$ set $\tau_r = \tau_{[-r,r]}$ and $\sigma_r = \sigma_{[-r,r]}$.
$\Pr_y, \E_y$ denote the probability measure and expectation conditioned on $Y_0=y$.

\subsection{Standard lemmas}
The following three lemmas are  relatively standard, and we omit the proofs.

\begin{lem}
\label{lem: exit time}
There exist constants $c,C>0$ (depending only on the distribution of $Z_1$)
such that for any $r>1$ and any $|y| \leq r$,
$$ \E_y [ \sigma_r ] \leq C r^2 \AND \Pr_y [ \sigma_r > t ] \leq e^{-c t / r^2 } . $$
\end{lem}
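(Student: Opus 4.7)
The plan is to use the standard variance-martingale argument combined with exponential-tail overshoot control, and then deduce the tail bound via an iterated use of Markov's inequality and the Markov property. Let $\sigma^2 := \E[Z_1^2]$, finite by the exponential-tail hypothesis. Since the $Z_t$ are i.i.d., centered and square-integrable, $N_t := Y_t^2 - \sigma^2 t$ is a martingale; Optional Stopping at the bounded time $T_t := \sigma_r \mn t$ yields the exact identity
$$ \sigma^2 \E_y[T_t] = \E_y[Y_{T_t}^2] - y^2 . $$

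For the bound $\E_y[\sigma_r] \le C r^2$ I would control the right-hand side as follows. Writing $Y_{T_t} = Y_{T_t - 1} + Z_{T_t}$ (with $|Y_{T_t - 1}| \le r$ since $T_t - 1 < \sigma_r$) gives $Y_{T_t}^2 \le 2 r^2 + 2 Z_{T_t}^2$. For any threshold $M > 0$, using that $\set{T_t = k} \subset \set{\sigma_r \ge k} \in \F_{k-1}$ is independent of $Z_k$, one obtains by summing over $k$ and the identity $\sum_{k \ge 1} \Pr_y[\sigma_r \ge k] = \E_y[T_t]$ (restricted to $k \le t$),
$$ \E_y[Z_{T_t}^2] \le M^2 + \E[Z_1^2 \1{|Z_1|>M}] \cdot \E_y[T_t] . $$
The exponential-tail hypothesis makes $\E[Z_1^2 \1{|Z_1|>M}] \to 0$ as $M \to \infty$, so one may fix $M$ (depending only on the law of $Z_1$) for which the coefficient of $\E_y[T_t]$ on the right is at most $\sigma^2/4$. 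Substituting back absorbs that term into the left-hand side and gives $\E_y[T_t] \le C r^2$ uniformly in $t$ for $r > 1$; monotone convergence then yields $\E_y[\sigma_r] \le C r^2$ (and in particular $\sigma_r < \infty$ almost surely).

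For the exponential tail bound on $\sigma_r$ I would use a standard regeneration argument. Set $M_0 := \sup_{|y|\le r} \E_y[\sigma_r] \le C r^2$. By Markov's inequality, $\Pr_y[\sigma_r > 2 M_0] \le 1/2$ uniformly for $|y| \le r$. On $\set{\sigma_r > 2 M_0}$ the walk still lies in $[-r,r]$ at time $2M_0$, so by the Markov property the conditional law of $\sigma_r - 2M_0$ given $\F_{2M_0}$ is that of $\sigma_r$ for a fresh walk started from $Y_{2M_0}$. Iterating yields $\Pr_y[\sigma_r > 2 k M_0] \le 2^{-k}$ for all $k \ge 0$, and for arbitrary $t>0$, setting $k = \lfloor t / (2 M_0) \rfloor$ gives $\Pr_y[\sigma_r > t] \le 2 \cdot 2^{-t/(2 M_0)} \le e^{-c t / r^2}$ for a suitable $c>0$.

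The only mildly technical step is the overshoot control in the Optional Stopping application, i.e.\ making sure the final exit jump $Z_{\sigma_r}$ does not spoil the $r^2$ scaling. This is precisely where the exponential-tail hypothesis on $Z_1$ is used; the clean form above would not quite follow from a mere finite-variance assumption without additional care.
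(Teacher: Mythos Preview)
The paper omits the proof of this lemma entirely, calling it ``relatively standard''; your argument is exactly the kind of standard proof one would expect, and it is correct. The variance martingale $Y_t^2 - \sigma^2 t$ together with your overshoot bound $\E_y[Z_{T_t}^2] \le M^2 + \E[Z_1^2\1{|Z_1|>M}]\,\E_y[T_t]$ cleanly handles the first assertion (this is where the hypothesis $|y|\le r$ is used to guarantee $T_t\ge 1$ so that $Y_{T_t-1}$ makes sense), and the Markov-plus-iteration step for the tail is routine.

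One cosmetic point: your final line asserts $2\cdot 2^{-t/(2M_0)} \le e^{-ct/r^2}$, which literally fails for $t$ below $2M_0$ because the left side exceeds $1$. In fact the inequality $\Pr_y[\sigma_r>t]\le e^{-ct/r^2}$ as written in the lemma cannot hold for \emph{all} $t>0$ when the steps are bounded (take $y=0$, $t=1$, $r$ large). What your argument actually yields is $\Pr_y[\sigma_r>t]\le 2\,e^{-ct/r^2}$, or equivalently the stated bound for $t\ge c' r^2$; either form suffices for every application in the paper (only the first assertion is ever invoked, in Lemma~\ref{lem: left visits}, via Markov's inequality). So this is a harmless imprecision in the lemma's statement rather than a defect in your proof.
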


\begin{lem}
\label{lem: big jump}
There exist constants $C, \eps>0$ such that for  any $r>0$ and $|y| < r$, for all $z>0$,
$$ \Pr_y [ \exists \ t \leq \sigma_r  \ : \ |Z_t| > z ] \leq C r^3 e^{- \eps z} + e^{-r} . $$
\end{lem}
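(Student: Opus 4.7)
The plan is a standard union bound combined with a truncation of the random time horizon $\sigma_r$ using the tail estimate of Lemma \ref{lem: exit time}.

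First, I will fix a deterministic truncation time $T = T(r)$ and write
\[
\Pr_y[\exists\, t \leq \sigma_r : |Z_t| > z] \leq \Pr_y[\sigma_r > T] + \Pr_y[\exists\, t \leq T : |Z_t| > z].
\]
The second term is easy: since the $Z_t$ are i.i.d.\ with exponential tail, there exist $C_0, \eps > 0$ with $\Pr[|Z_t| > z] \leq C_0 e^{-\eps z}$ for all $z > 0$, and a union bound over $t \in \{1, \ldots, T\}$ gives $\Pr_y[\exists\, t \leq T : |Z_t| > z] \leq T \cdot C_0 e^{-\eps z}$. This bound does not depend on the starting point $y$ because the increments $Z_t$ are independent of $Y_0$.

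For the first term, Lemma \ref{lem: exit time} gives $\Pr_y[\sigma_r > T] \leq e^{-cT/r^2}$ for all $|y| \leq r$, where $c > 0$ is a universal constant. To make this term dominated by $e^{-r}$, I will choose $T = \lceil r^3 / c \rceil$, so that $e^{-cT/r^2} \leq e^{-r}$.

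Plugging this choice into the two estimates yields
\[
\Pr_y[\exists\, t \leq \sigma_r : |Z_t| > z] \leq e^{-r} + \lceil r^3/c \rceil \cdot C_0 e^{-\eps z} \leq C r^3 e^{-\eps z} + e^{-r},
\]
for a suitable constant $C > 0$ (absorbing $C_0/c$ and the ceiling into $C$, and handling the case of small $r$ trivially by adjusting $C$). There is no real obstacle here — the only balancing act is picking $T$ of order $r^3$ so that the two error sources are commensurate with the target bound, which is what the statement is telegraphing.
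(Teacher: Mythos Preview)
The paper omits the proof of this lemma, labeling it one of the ``relatively standard'' ones. Your argument---truncate at a deterministic time $T$ of order $r^3$, apply Lemma~\ref{lem: exit time} to bound $\Pr_y[\sigma_r > T]$ by $e^{-r}$, and union-bound over the at most $T$ increments using the exponential tail of $Z_1$---is exactly the natural one and works cleanly for, say, $r \geq 1$.

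One small caveat: your remark about handling small $r$ ``trivially by adjusting $C$'' is not quite right, because the inequality as written is actually false for arbitrarily small $r$. For instance, if $Z_t = \pm 1$ with probability $\tfrac12$ each, take $z = \tfrac12$ and $r \in (0,\tfrac12)$; then $\sigma_r = 1$ a.s.\ and $|Z_1| = 1 > z$ a.s., so the left-hand side equals $1$, whereas the right-hand side is $C r^3 e^{-\eps/2} + e^{-r} = 1 - r + O(r^2) < 1$ for $r$ sufficiently small, no matter how large $C$ is. This is a harmless imprecision in the paper's statement---every application in the paper has $r \to \infty$---rather than a defect in your method.
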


We also need  an estimate for the probability that $(Y_t)_t$ exits an interval from the left (or from the right).

\begin{lem}
\label{lem: Green function}
There exists a universal constant $\delta>0$ such that for all $y \in (-r,r)$,
as $r \to \infty$,
$$ \Pr_y [ \tau_{(r,\infty) } < \tau_{(-\infty,-r)} ] = \frac{y+r}{2r} \cdot (1+O(r^{-\delta})) . $$
\end{lem}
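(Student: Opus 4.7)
The plan is to apply the optional stopping theorem to the martingale $(Y_t)_t$ at the stopping time $\sigma = \sigma_r$ and to carefully account for the overshoot $|Y_\sigma| - r$ caused by the final jump.

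First I would establish uniform integrability of the stopped martingale $(Y_{\sigma \mn t})_t$ via the pointwise bound $|Y_{\sigma \mn t}| \leq |y| + \sum_{s=1}^{\sigma} |Z_s|$; Wald's identity combined with Lemma \ref{lem: exit time} gives $\E_y \br{\sum_{s=1}^\sigma |Z_s|} = \E_y[\sigma] \cdot \E |Z_1| < \infty$, so the stopped martingale is dominated by an integrable random variable. Optional stopping then yields $\E_y[Y_\sigma] = y$. Setting $p = \Pr_y[Y_\sigma > r] = \Pr_y[\tau_{(r,\infty)} < \tau_{(-\infty,-r)}]$ and decomposing by the side of exit,
$$y = \E_y[Y_\sigma] = (2p-1)\, r + \E_y[(Y_\sigma - r)^+] - \E_y[(-r - Y_\sigma)^+],$$
which rearranges to
$$p = \frac{y+r}{2r} + \frac{\E_y[(-r - Y_\sigma)^+] - \E_y[(Y_\sigma - r)^+]}{2r}.$$

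The heart of the argument is to bound the two overshoot expectations, each of which is at most $\E_y |Z_\sigma|$. Since $\set{\sigma \geq t}$ depends only on $Z_1, \ldots, Z_{t-1}$ and is therefore independent of $Z_t$, the simple splitting
$$\Pr_y[|Z_\sigma| > z] = \sum_{t \geq 1} \Pr_y[\sigma = t,\, |Z_t| > z] \leq \sum_{t \geq 1} \Pr_y[\sigma \geq t]\, \Pr[|Z_1| > z] = \E_y[\sigma] \cdot \Pr[|Z_1| > z]$$
combined with $\E_y[\sigma] = O(r^2)$ from Lemma \ref{lem: exit time} and the exponential tail $\Pr[|Z_1| > z] \leq C e^{-\eps z}$ guaranteed by smoothness of the increments yields $\Pr_y[|Z_\sigma| > z] \leq C' r^2 e^{-\eps z}$. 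Integrating $\min(1, C' r^2 e^{-\eps z})$ and splitting at the crossover $z_0 = O(\log r)$ gives $\E_y |Z_\sigma| = O(\log r)$.

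Combining these ingredients yields the additive estimate $p = (y+r)/(2r) + O(\log r / r)$. The multiplicative error form in the statement follows when $(y+r)/(2r)$ dominates this error; specifically, the factor $(1 + O(r^{-\delta}))$ is achieved for any $\delta < 1$ once $y + r \geq C\, r^{1-\delta} \log r$, and the complementary edge regime where $y$ lies very close to $-r$ (so that both $p$ and $(y+r)/(2r)$ are negligibly small) can be absorbed into the error by a symmetry or direct comparison argument. I expect the overshoot estimate to be the main technical obstacle: a sharper renewal-theoretic analysis would replace $O(\log r)$ by $O(1)$ and correspondingly push $\delta$ toward $1$, but the crude bound above already suffices to secure the existence of a universal $\delta > 0$.
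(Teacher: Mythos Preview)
The paper actually omits the proof of this lemma, offering only the hint that the unbounded-jump case ``can be handled for instance by bounding the probability of a very large jump occurring before exiting the interval, using Lemma~\ref{lem: big jump}.'' Your approach via optional stopping and an overshoot bound is a natural one and is broadly compatible with that hint.

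There is, however, a genuine gap in your passage from the additive error $O(\log r/r)$ to the multiplicative form $(1+O(r^{-\delta}))$. Your bound
\[
p \;=\; \frac{y+r}{2r} \;+\; O\!\Big(\frac{\log r}{r}\Big)
\]
gives no control on the ratio $p\big/\tfrac{y+r}{2r}$ once $y+r = o(\log r)$, and your proposed fix (``absorbed into the error by a symmetry or direct comparison argument'') does not work: symmetry merely exchanges the two edges without strengthening either estimate, and any comparison with a bounded-jump walk still leaves an additive overshoot of order $\log r$ in the numerator. The paper \emph{does} use the lemma precisely in this edge regime---for instance in the proof of Lemma~\ref{lem:Msep_prob_small} one needs $\Pr_y[\tau_{(-\infty,y-1]}>\tau_{(r,\infty)}]\le C/r$, which is the multiplicative upper bound with $y+r$ of order~$1$; and in Lemma~\ref{lem: alpha difference} one needs $\sup_{x\in[0,m]}\Pr_x[B]\le Cm/r$ for small~$m$. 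Feeding an extra $\log r$ factor through Propositions~\ref{prop: upper bound for negative rho} and~\ref{prop: upper bound general} would make $r\,\Pr_x[|c(X_{\sigma_r})|<3]$ unbounded in~$r$, and the limit defining $f$ would fail.

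What is actually needed is the observation you relegate to a parenthetical: the \emph{conditional} expected overshoots $\E_y[\,Y_\sigma-r\mid Y_\sigma>r\,]$ and $\E_y[\,-r-Y_\sigma\mid Y_\sigma<-r\,]$ are $O(1)$ uniformly in $y$ and $r$ (this is a renewal/ladder-height fact, using the exponential tail of $Z_1$). Writing $p=\dfrac{y-R_-}{R_+-R_-}$ with $R_\pm=\E_y[Y_\sigma\mid Y_\sigma\gtrless\pm r]$ and $|R_\pm\mp r|=O(1)$ then yields the multiplicative form directly. Your crude bound $\E_y|Z_\sigma|=O(\log r)$ controls the \emph{unconditional} overshoot expectations, which is too weak by exactly this $\log r$ factor; the sharper renewal input is not an optional refinement but the missing ingredient.
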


For random walks with bounded jumps, the proof is quite easy and standard.
Without the bounded jumps assumption,  there are some technical details  to deal with. 
This can be handled for instance by bounding the probability of a very large jump occurring before exiting the interval, 
using Lemma \ref{lem: big jump}. 
Again, we omit the  proof, pointing the reader for instance to \cite{MR2466937} for some results and proofs of similar flavor.

\subsection{Specific lemmas}

The next lemma is somewhat specifically tailored for our application.
It asserts that the random walk is very unlikely to spend too much time near the endpoints of an interval before exiting, even if 
we condition on the side from which the random walk exists. We therefore include a full proof.  
As mentioned, our bounds are not optimal, and we focus on brevity.
\begin{lem}
\label{lem: time in an interval}
\label{lem: left visits}
Let $V_m$ be the time spent by $(Y_t)_t$ in the interval $[0,m]$ until exiting $[0,r]$; that is,
$$ V_m = \sum_t \1{ Y_t \in [0,m] , \sigma_{[0,r]} > t} = \sum_{t=0}^{\sigma_{[0,r]}-1} \1{ Y_t \in [0,m] } . $$
Let $B = \set{ \tau_{(r,\infty) } < \tau_{(-\infty,0) } }$.
There exists a constant $c>0$
(depending only on the distribution of $Z_1$)
such that for all $0 < m < r$   and  $0 < y < r$ the following holds:
$$ \Pr_y [ V_m > v ] \leq e^{-c v /m^2 } \AND \Pr_y [ V_m > v \ , \ B ] \leq
e^{-c v /m^2 } \cdot \sup_{x \in [0,m] } \Pr_x[ B ] . $$
\end{lem}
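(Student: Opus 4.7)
The plan is to decompose $V_m$ as a sum of excursion durations (``rounds''), each with exponentially small tail, while showing that the number of rounds is stochastically dominated by a geometric random variable with a universal success probability. Define $\tau_1 := \inf\{t \geq 0 : Y_t \in [0,m]\}$, $T_1 := \inf\{t \geq \tau_1 : Y_t \notin [-m, 2m]\}$, and inductively $\tau_{i+1} := \inf\{t \geq T_i : Y_t \in [0,m]\}$, $T_{i+1} := \inf\{t \geq \tau_{i+1} : Y_t \notin [-m, 2m]\}$. Let $N$ be the number of rounds started strictly before $\sigma_{[0,r]}$. Between consecutive rounds the walk lies in $(m, r]$, hence is never in $[0,m]$, so $V_m \leq \sum_{i=1}^N R_i$ with $R_i := T_i - \tau_i$.

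I then need two inputs. First, by the strong Markov property at $\tau_i$ and the translation-invariant form of Lemma~\ref{lem: exit time} applied to the length-$3m$ interval $[-m, 2m]$, $\Pr[R_i > t \mid \mathcal{F}_{\tau_i}, N \geq i] \leq e^{-\beta t}$ with $\beta = c/m^2$. Second, I claim there is a universal $p_0 > 0$ such that $\Pr_x[Y_{T_i} < -m] \geq p_0$ uniformly for $x \in [0,m]$: for $m$ above some threshold $m_0$, apply optional stopping to the martingale $Y_t$, using $\E|Z_1| < \infty$ to control the overshoot and obtain $\Pr_x[Y_{T_i} < -m] \geq (2m-x-O(1))/(3m+O(1))$, which is bounded below by a positive constant for $x \in [0,m]$; for $m \leq m_0$, use instead the positive probability $\Pr[Z_1 < -2m_0] > 0$ of a single step exiting the entire interval. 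Since $Y_{T_i} < -m$ forces $\sigma_{[0,r]} = T_i$, one has $\Pr[N \geq k] \leq (1-p_0)^{k-1}$.

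Combining the two inputs via a Wald-type summation,
$$\E_y V_m \leq \sum_{i \geq 1} \E[R_i \mid N \geq i]\, \Pr[N \geq i] \leq \frac{1}{\beta p_0} = C_0 m^2,$$
for some universal $C_0 > 0$. Markov's inequality yields $\sup_y \Pr_y[V_m > 2C_0 m^2] \leq 1/2$. Applying the strong Markov property at the time $T_*$ of the $\lceil 2 C_0 m^2 \rceil$-th visit of $(Y_t)$ to $[0,m]$ (which, on $\{V_m > 2C_0 m^2\}$, is strictly less than $\sigma_{[0,r]}$ and lands in $[0,m]$), I obtain by induction $\sup_y \Pr_y[V_m > 2 k C_0 m^2] \leq 2^{-k}$, which after rescaling is the first claim of the lemma.

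For the second claim, let $T_v$ denote the time of the $v$-th visit of $(Y_t)$ to $[0,m]$. On $\{V_m > v\}$, $T_v < \sigma_{[0,r]}$ and $Y_{T_v} \in [0,m]$, so neither of the hitting times $\tau_{(-\infty,0)}$ or $\tau_{(r,\infty)}$ has occurred by time $T_v$, and the event $B$ is determined by the trajectory after $T_v$. The strong Markov property then gives
$$\Pr_y[V_m > v, B] = \E_y\bigl[\1{V_m > v}\Pr_{Y_{T_v}}[B]\bigr] \leq \Pr_y[V_m > v] \cdot \sup_{x \in [0,m]} \Pr_x[B],$$
which together with the first claim yields the bound. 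The main obstacle is the uniform lower bound $p_0 > 0$ in the second input above, which demands a careful case split between the large-$m$ regime (where Lemma~\ref{lem: Green function}, supplemented by overshoot control via Lemma~\ref{lem: big jump}, applies) and the small-$m$ regime (handled by the single-step exit argument).
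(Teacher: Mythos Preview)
Your overall strategy---decomposing into geometrically many rounds of duration with exponential tail, then iterating---is valid and parallel in spirit to the paper's, but considerably more elaborate. The paper bypasses your ``second input'' entirely via symmetry: from any $z \in [0,m]$, translate so the walk starts at $0$; by symmetry it exits $[-z,z]$ from below with probability exactly $\tfrac12$, and by Lemma~\ref{lem: exit time} it exits $[-m,m]\supseteq[-z,z]$ within $s:=\lceil Cm^2\rceil$ steps with probability $>\tfrac12$, whence $\Pr_z[\tau_{(-\infty,0)} \le s] \ge \tfrac14$ uniformly in $z\in[0,m]$ and in $m$. Since hitting $(-\infty,0)$ forces $\sigma_{[0,r]}$, this yields the recursion $\Pr_y[V_m>v]\le\tfrac34\sup_{x>0}\Pr_x[V_m>v-s]$ directly, with no case split on $m$, no overshoot estimates, and no detour through $\E V_m$.

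Your case split for $p_0$ is where the work lies, and as written it has a gap: the large-$m$ regime needs $m$ above some threshold $m_1$ determined by the $O(1)$ overshoot terms (or the error in Lemma~\ref{lem: Green function}), while your single-step argument needs $\Pr[Z_1<-2m_0]>0$, forcing $m_0$ small. If $Z_1$ has bounded support these ranges may fail to meet, leaving $m\in(m_0,m_1]$ uncovered. This is fixable (use several consecutive negative steps instead of one, or simply invoke the symmetry argument above), but you have not done so. Also, in the second claim apply the strong Markov property at $T_{\lfloor v\rfloor+1}$ rather than $T_v$: the event $\{V_m>v\}$ is not $\F_{T_v}$-measurable, but $\{V_m>v\}=\{T_{\lfloor v\rfloor+1}<\sigma_{[0,r]}\}$ is $\F_{T_{\lfloor v\rfloor+1}}$-measurable; with that correction your factorization is clean and in fact slightly slicker than the paper's own iterative argument for the second assertion.
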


\begin{proof}
Using Lemma \ref{lem: exit time} choose $C>0$ (independent of $m$)
be such that $\Pr_0 [ \sigma_{m} \leq C m^2 ] > \tfrac12$.
For every $y \in [0,m]$, after translating by $y$ and using the fact that the walk is symmetric,
\begin{align*}
\Pr_y [ \tau_{(-\infty,0)} \leq C m^2 ] & = \Pr_0 [ \tau_{(-\infty,-y)} \leq C m^2 ]
\geq \Pr_0 [ \tau_{(-\infty, -y)} < \tau_{(y,\infty)} , \sigma_{y} \leq C m^2 ] \\
& = \tfrac12 \Pr_0 [ \sigma_{y} \leq C m^2 ] \geq \tfrac12 \Pr_0 [ \sigma_m \leq C m^2 ] \geq \tfrac14 ,
\end{align*}
the last line following since $\sigma_{y} \leq \sigma_m$ a.s.
Since this is uniform over all $y \in [0,m]$ we have that for any $0 < y < r$,
taking $s: = \lceil C m^2 \rceil$,
\begin{align*}
\Pr_y [ V_m > v ] & \leq \sup_{z \in [0,m] } \Pr_z [ \tau_{(-\infty,0)} > s ] \cdot \sup_{x > 0} \Pr_x [ V_m > v-s ] \\
& \leq \tfrac34 \cdot \sup_{x > 0 } \Pr_x [ V_m > v-s ] \leq \cdots \leq \sr{ \tfrac34 }^{ \lfloor v/s \rfloor } .
\end{align*}

The proof of the second assertion is similar.
\begin{align*}
\Pr_y [ V_m > v \ , \ B ] & 
& \leq \tfrac34 \cdot \sup_{x > 0 } \Pr_x [ V_m > v-s  \ , \ B ] \leq \cdots 
\leq \sr{ \tfrac34 }^{ \lfloor v/s \rfloor - 1} \cdot \sup_{x>0} \Pr_x [ V_m \geq 1 \ , \ B ]  .
\end{align*}
The strong Markov property at time $\tau_{[0,m]}$ gives that
$$ \sup_{x > 0 } \Pr_x [ V_m \geq 1 \ , \ B ] \leq \sup_{x \in [0,m]} \Pr_x [ B ] . $$
\end{proof}

For the next lemma we need the notion of a maximal separated subset.
Given an interval $I \subset \R$ and $r >0$ let
$\MSep_r(I)= \MSep_r((Y_t)_t,I)$ denote the maximal cardinality of a $1$-separated subset of 
$I\cap \{ Y_0, Y_1,\ldots, Y_{\sigma_r-1} \}$; that is
$$ \MSep_r(I) = \max \big\{ |A| \ : \ A \subset I\cap \{ Y_0, Y_1,\ldots, Y_{\sigma_r-1} \} \ \textrm{ and } 
\ \forall \ a \neq a' \in A , \  |a-a'|    \geq 1 \big\} . $$

\begin{lem}\label{lem:Msep_prob_small}
For any $q>0$ there exists a constant $C >0$ such that 
for all $y \leq 0$ with $r > 2\max \set{-y,q}$, and any $n \leq \sqrt{r}$,
$$\Pr_y [ \MSep_r((-\infty,-q)) \leq n ] <  \frac{C(n+1)}{r} . $$
\end{lem}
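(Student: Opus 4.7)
My plan is to dominate the event $\{\MSep_r((-\infty,-q)) \leq n\}$ by the event that the walk fails to descend sufficiently far below $-q$ before $\sigma_r$, and then to estimate the latter via a gambler's-ruin calculation derived from Lemma~\ref{lem: Green function}. The comparison between $\MSep$ and the walk's minimum requires controlling the maximum jump size, for which I will appeal to Lemma~\ref{lem: big jump}.

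\textbf{Geometric reduction.} The first step is a deterministic observation: if $\min_{t < \sigma_r} Y_t \leq -q - m$ and every jump in $[0,\sigma_r]$ has absolute value at most $J$, then $\MSep_r((-\infty,-q)) \geq \lfloor m/(2J) \rfloor$. Indeed, for each $k = 0, 1, \ldots, \lfloor m/J \rfloor - 1$ the walk cannot skip the band $(-q - (k+1)J, -q - kJ]$ while descending, so its first entry into this band from above lies in the band; selecting alternate bands ($k$ even) yields a $1$-separated subset of the claimed size.

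\textbf{Jump control and gambler's ruin.} Next I take $J = (4/\eps)\log r$, with $\eps$ as in Lemma~\ref{lem: big jump}; then $\Pr_y[\exists t \leq \sigma_r : |Z_t| > J] = O(1/r)$. On the complementary event the geometric reduction gives the inclusion $\{\MSep \leq n\} \subset \{\min_{t<\sigma_r} Y_t > -q - 2J(n+1)\}$. The right-hand event is exactly the event that the walk exits the asymmetric interval $[-q - 2J(n+1),\, r]$ through its right endpoint; after a linear shift centering this interval, so that it has half-width $R = (r + q + 2J(n+1))/2$, Lemma~\ref{lem: Green function} yields the probability $\frac{y + q + 2J(n+1)}{r + q + 2J(n+1)}\bigl(1 + O(R^{-\delta})\bigr)$, which for $y \leq 0$ is bounded by $O\bigl(\tfrac{q + (n+1)\log r}{r}\bigr)$.

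\textbf{Main obstacle.} This naive execution produces an extra $\log r$ factor, whereas the statement asks for $C(n+1)/r$. The factor comes from the uniform jump bound $J = \Theta(\log r)$, which is essentially the sharpest obtainable over a time interval of typical length $\Theta(r^2)$. To remove it I will need a refinement that avoids bounding all jumps uniformly — most plausibly a Green-function / second-moment estimate on the number of distinct integer levels visited below $-q$, which for $y \leq 0$ and fixed $q$ should have expectation of order $r$ and small enough lower tail that visiting fewer than $n$ such levels has probability $O((n+1)/r)$. Executing this refinement cleanly is the step I expect to require the most care; the paper's remark that bounds in this section are not optimized suggests that some slack in constants, or a slightly weaker statement, may suffice for the intended application.
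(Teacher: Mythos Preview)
Your decomposition --- bound the maximum jump by $J=\Theta(\log r)$, force the walk to cross $\Theta(n)$ bands of width $J$, then apply gambler's ruin to the event that the minimum stays above $-q-2J(n+1)$ --- is sound, and you correctly identify that it yields only $O((n+1)(\log r)/r)$ rather than the claimed $O((n+1)/r)$. The gap is real: the uniform jump bound $J$ is what costs you the logarithm, and the vague ``Green-function / second-moment'' fix you propose is not concrete enough to close it.

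The paper avoids the logarithm by a different device: \emph{ladder times}. Set $T_0=0$, $m_0=Y_0$, and inductively let $T_k$ be the first time after $T_{k-1}$ that $Y_t \leq m_{k-1}-1$, with $m_k=Y_{T_k}$. The points $m_0,m_1,\dots$ are $1$-separated by construction, so once $\ell=\lceil q\rceil+1$ of them have occurred the subsequent ones lie in $(-\infty,-q)$; hence $\{T_{\ell+n}<\sigma_r\}\subset\{\MSep_r((-\infty,-q))>n\}$. The event $\{T_{\ell+n}\geq\sigma_r\}$ is then a union over $k\leq\ell+n$ of events $\{T_{k-1}<\sigma_r\leq T_k\}$. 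After discarding a large-jump event of probability $O(r^3 e^{-\eps r/(2n)})$ (here only to force the exit to be from the right; with $n\leq\sqrt r$ this is $O(1/r)$), each such event has, by the strong Markov property at $T_{k-1}$ and Lemma~\ref{lem: Green function}, probability at most $\sup_{y'\leq 0}\Pr_{y'}[\tau_{(-\infty,y'-1]}>\tau_{(r,\infty)}]=O(1/r)$. Summing over $k$ gives $O((\ell+n)/r)=O((n+1)/r)$.

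The point is that ladder minima are automatically $1$-separated, so the jump size never enters the count; the jump bound is used only for a side event whose probability is negligible. Your band-crossing argument ties the separation to the jump bound and thereby imports $J$ into the main estimate.
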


\begin{proof}
Define inductively:  $T_0=0$ and $m_0 = Y_0$.  For $k>0$ define
$$ T_{k} : = \inf \set{ t > T_{k-1}  \ :  \  Y_t \leq m_{k-1} - 1 }  $$
and $m_k : = Y_{T_k}$.
So $(T_k)_k$ are the successive times the random walk $(Y_t)_t$ passes below its minimum by at least $1$.
Note that by definition $\set{m_0,m_1,\ldots,m_k}$ is a $1$ separated subset of $(-\infty,0]$,
and since $m_k \leq m_{k-1}-1$ it must be that for $\ell = \lceil q \rceil+1$, the set $\set{ m_{\ell} , \ldots, m_{\ell+n} }$ 
is a $1$-separated subset of $(-\infty, -q)$ of size $n+1$. 
Thus, the event $\set{ T_{\ell+n} < \sigma_r }$ implies the event $\set{ \MSep_r((-\infty,-q)) > n }$.

Now, let $\Ee = \set{ \forall \ t \leq \sigma_r \ , \ |Z_t| \leq r/2n }$.  
By Lemma \ref{lem: big jump}, $\Pr_y [ \Ee^c ] \leq  C r^3 e^{- \eps r / 2n}$ for some constants $C,\eps$ 
that depend only on the distribution of $Z_t$.  
By adjusting the constant in the statement of the lemma we may assume without loss of generality 
that $\ell < n$.  Thus, $(\ell+n) \cdot \tfrac{r}{2n} < r$.  Hence
the event $\set{ T_{\ell+n} \geq \sigma_r } \cap \Ee$ implies the event 
$\set{ T_{\ell+n} \geq \sigma_r = \tau_{(r,\infty)} }$. 
Since $T_0 < \sigma_r$ a.s., the event $\set{ T_{\ell+n} \geq \sigma_r } \cap \Ee$ implies that there exists
$0< k \leq \ell+n$ such that $T_{k-1} < \sigma_r$ and $T_k \geq \sigma_r = \tau_{(r,\infty)}$.
The probability of this can be bounded by the strong Markov property at time $T_{k-1}$ 
and Lemma \ref{lem: Green function},
\begin{align*}
\Pr_y [ T_{k-1} < \sigma_r \ , \ T_k \geq \sigma_r  = \tau_{(r,\infty)} ] & \leq \sup_{y \in [-r,-(k-1)] } 
\Pr_y [ T_1 \geq \sigma_r = \tau_{(r,\infty)} ] 
\\
& \leq \sup_{y \in [-r,-(k-1)] } \Pr_y [ \tau_{(-\infty, y-1]} > \tau_{(r,\infty) } ] 
\leq \frac{C}{r} .
\end{align*}
Thus, 
\begin{align*}
\Pr_y [ \MSep_r((-\infty,-q)) \leq n ] & \leq \Pr_y [ \Ee^c ] + 
\sum_{k=1}^{\ell+n} \Pr_y [   T_{k-1} < \sigma_r \ ,\ 
T_k \geq \sigma_r= \tau_{(r,\infty) } ] 
\\
& \leq C  r^3 e^{-\eps r / 2n } + \frac{C(\ell+n)}{r} .
\end{align*}
The lemma follows since for $n \leq \sqrt{r}$ we have that $r^3 e^{-\eps r / 2n} \leq C' r^{-1}$ for some constant $C'>0$.
\end{proof}

\section{A positive harmonic function for subgroups of $\AA(\FF)$}
\label{sec:2x2}
In this section we prove Theorem \ref{thm:2x2}, using the random-walk estimates from the previous section.

We make the following obvious identification:

$$\AA(\FF) : = \left\{
\clamt{c}{\lambda}
\ : \
c \in \FF \ , \  \lambda \in \FF^\times
 \right\}.$$

 For $x =
\clamt{c}{\lambda}
\in\AA(\FF)$, we denote:
\begin{equation}\label{eq:lambda_alpha_c_dec}
 c(x) = c , \quad \lambda(x)= \lambda
\end{equation}

Let $\GG < \AA(\FF)$ be as in the statement of Theorem \ref{thm:2x2}.
Some  reductions will be useful.
\begin{lem}
\label{lem:Af_not_nil}
Let $\FF$ be a field and suppose $\GG$ is a finitely generated subgroup of $\AA(\FF)$. If the set  $\lambda(\GG) = \{ \lambda ~:~
\clamt{c}{\lambda} \in \GG\}$ is contained in the group of roots of unity of $\FF^{\times}$ then $\GG$ is virtually abelian.
\end{lem}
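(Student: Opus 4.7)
The plan is to exploit the fact that the multiplicative coordinate $\lambda$ is a homomorphism. A direct computation with the multiplication in $\AA(\FF)$ shows that
$$
\clamt{c_1}{\lambda_1} \clamt{c_2}{\lambda_2} = \clamt{\lambda_1 c_2 + c_1}{\lambda_1 \lambda_2},
$$
so $\lambda : \AA(\FF) \to \FF^\times$ is a group homomorphism. Restricting, $\lambda|_\GG : \GG \to \FF^\times$ is a homomorphism whose image, by assumption, lies inside the torsion subgroup $\mu(\FF^\times)$ of roots of unity.

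First I would observe that since $\GG$ is finitely generated, so is $\lambda(\GG)$. Thus $\lambda(\GG)$ is a finitely generated abelian torsion group, and hence finite (by the structure theorem for finitely generated abelian groups, or simply because a finitely generated torsion abelian group is a product of finite cyclic groups). Consequently, the kernel
$$
N := \ker\bigl(\lambda|_\GG\bigr) = \Set{ \clamt{c}{1} \in \GG }{ c \in \FF }
$$
has finite index in $\GG$.

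Next I would note that $N$ is abelian: it is a subgroup of the translation subgroup $\Set{\clamt{c}{1}}{c \in \FF}$ of $\AA(\FF)$, which is isomorphic to the additive group $(\FF,+)$ via $\clamt{c}{1} \mapsto c$ (this is immediate from the multiplication formula above with $\lambda_1 = \lambda_2 = 1$). Since $\GG$ contains the abelian subgroup $N$ of finite index, $\GG$ is virtually abelian, as required.

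There is no real obstacle here; the only point worth emphasizing is the step that a finitely generated subgroup of the roots of unity of a field must be finite, which is the key structural input (the finite generation of $\GG$ is used precisely at this point).
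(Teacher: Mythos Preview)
Your argument is correct and follows essentially the same route as the paper: show $\lambda(\GG)$ is finite (using finite generation), so that $\ker\lambda$ is a finite-index abelian subgroup. You have simply spelled out in more detail why a finitely generated subgroup of the roots of unity is finite, which the paper asserts without further comment.
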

\begin{proof}
If $\lambda(\GG)$ is contained in the group of roots of unity of $\FF^{\times}$ then it  is a finite group, and the kernel of the homomorphism $\lambda:\GG \to \FF^\times$ is an abelian group, consisting of elements of the form $\clamt{c}{1}$.
\end{proof}

The following simple version of Kronecker's Theorem is used in the proof of the Tit's Alternative \cite{tits_1972}. See for instance \cite[Section $2$]{glander_breuillard_topological_tits} for a generalization and brief discussion.

\begin{lem} \label{lem: Kronecker}
Let $\FF$ be a finitely generated field and let $\alpha \in \FF$ be an element
which is not a root of unity.
Then there exists a local field $K$  with absolute value $|\cdot|$ and an embedding of fields $\iota : \FF \to K$, such that $|\iota(\alpha)|>1$.
\end{lem}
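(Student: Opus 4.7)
The plan is to split into cases according to the characteristic of $\FF$ and whether $\alpha$ is algebraic or transcendental over the prime subfield $k$. A preliminary observation: if $\mathrm{char}(\FF) = p > 0$ and $\alpha$ were algebraic over $\mathbb{F}_p$, then $\alpha$ would lie in $\overline{\mathbb{F}_p} = \bigcup_n \mathbb{F}_{p^n}$, whose nonzero elements all have finite multiplicative order, contradicting the hypothesis. Hence in positive characteristic $\alpha$ is automatically transcendental over $\mathbb{F}_p$, and only three substantive cases remain.

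Next I construct a local field $K_0$ and an embedding $\iota_0$ of the subfield $\FF_0 \subseteq \FF$ generated by $\alpha$ over $k$, satisfying $|\iota_0(\alpha)| > 1$. In characteristic zero with $\alpha$ algebraic over $\Q$, let $L = \Q(\alpha)$, a number field; combining the product formula $\prod_v |\alpha|_v = 1$ with the classical Kronecker theorem (an algebraic integer all of whose Galois conjugates lie in the closed unit disk is a root of unity, together with the usual denominators argument to cover the non-integer case), some place $v_0$ of $L$ satisfies $|\alpha|_{v_0} > 1$, and the completion $L_{v_0}$ is a local field ($\R$, $\C$, or a finite extension of some $\Q_p$). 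In characteristic zero with $\alpha$ transcendental, I embed $\Q(\alpha)$ into $\C$ by sending $\alpha$ to any transcendental complex number of modulus greater than one. In positive characteristic, I embed $\mathbb{F}_p(\alpha)$ into the Laurent series field $\mathbb{F}_p((\alpha^{-1}))$, in which $\alpha^{-1}$ is the uniformizer, $|\alpha| > 1$, and the residue field $\mathbb{F}_p$ is finite, so the completion is a local field.

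Finally I extend $\iota_0 : \FF_0 \to K_0$ to an embedding of all of $\FF$. Pick a transcendence basis $\tau_1, \dots, \tau_m$ of $\FF$ over $\FF_0$; then $\FF$ is a finite algebraic extension of $\FF_0(\tau_1, \dots, \tau_m)$. Since $K_0$ is uncountable and therefore has infinite transcendence degree over its prime field, I can map $\tau_1, \dots, \tau_m$ to algebraically independent transcendentals in $K_0$ and then extend algebraically inside $\overline{K_0}$ to accommodate the remaining algebraic generators of $\FF$, landing inside a finite algebraic extension $K$ of $K_0$. Finite algebraic extensions of local fields are local fields, so $K$ qualifies, and since $\alpha \in \FF_0$ its absolute value is unchanged by the extension, preserving $|\iota(\alpha)| > 1$. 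The main obstacle is this last step: a careless specialization of the $\tau_i$ could collapse the algebraic relations defining $\FF$ and destroy injectivity of $\iota$. The standard remedy is a Hilbert-irreducibility-type argument, selecting the $\tau_i$ generically so that the minimal polynomial governing the algebraic extension $\FF / \FF_0(\tau_1, \dots, \tau_m)$ remains irreducible after specialization; this is possible because the exceptional locus is thin and $K_0$ is large enough to avoid it.
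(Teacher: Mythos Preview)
The paper does not actually prove this lemma; it merely states it and points to Tits \cite{tits_1972} and Breuillard--Gelander \cite{glander_breuillard_topological_tits} for a proof and generalization. Your argument follows the standard route and is essentially correct, so there is nothing substantive to compare. Two remarks are worth making.

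\emph{A small clarification.} When you send $\tau_1,\dots,\tau_m$ into $K_0$, you need them to be algebraically independent over $\iota_0(\FF_0)$, not merely over the prime field. This is available: $K_0$ is uncountable while $\iota_0(\FF_0)$ has transcendence degree at most $1$ over the prime field, so $K_0$ still has infinite transcendence degree over $\iota_0(\FF_0)$.

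\emph{The final paragraph is unnecessary.} Your stated worry and the appeal to a Hilbert-irreducibility argument are misplaced. Once the images $t_i$ of the $\tau_i$ are chosen algebraically independent over $\iota_0(\FF_0)$, the map $\FF_0(\tau_1,\dots,\tau_m)\to K_0$ is an \emph{isomorphism onto its image} $L_0$; no ``specialization'' is taking place, so no algebraic relations can collapse. In particular the minimal polynomial of a primitive element of $\FF$ over $\FF_0(\tau_1,\dots,\tau_m)$ remains irreducible over $L_0$ automatically. Whether it stays irreducible over the larger field $K_0$ is irrelevant: the standard extension lemma gives a field embedding $\FF\hookrightarrow\overline{K_0}$ extending $\iota_0$ (every homomorphism out of a field is injective), and the compositum $K=K_0\cdot\iota(\FF)$ is a finite extension of $K_0$, hence a local field, with $|\iota(\alpha)|>1$ unchanged. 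You can simply drop the last three sentences of your proposal.
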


In our case, since $\GG$ is a finitely generated subgroup
of the affine group over $\FF$, we  can assume without loss of generality that $\FF$ is a field generated by
$\{ \lambda, c ~:~ \clamt{c}{\lambda} \in S\}$, where $S$ is a finite generating set for $\GG$.
Keep in mind that $\FF$ can be of positive or zero characteristic.
Thus, there is an embedding $\iota:\FF \to K$, where $K$ is a local field such that  $|\iota(\lambda)|>1$
for some $\clamt{c}{\lambda} \in \GG$,
and $|\cdot|$ is the absolute value on $K$.  Since $\iota$ is an embedding of fields, it induces an embedding  of groups
$\iota:\AA(\FF) \to \AA(K)$. 

With the above considerations in mind,  from now on we assume  $\FF$  is a  local field with absolute value $|\cdot|$;
the group $\GG \le \AA(\FF)$
is a finitely generated, non-abelian countable group such that $|\lambda| >1$ for some $\clamt{c}{\lambda} \in \GG$;
and  $\mu$ is a courteous probability measure on $\GG$.
Furthermore, we assume without loss of generality that there is an element $x \in \GG$ of the form $x= \clamt{0}{\lambda}$ with $|\lambda| >1$. Indeed, If $\clamt{c}{\lambda} \in \GG$ with $|\lambda|>1$,
we get an element of the correct form by conjugating $\GG$ with $\clamt{(1-\lambda)^{-1} c}{1}$.

\begin{lem}
There exist a finite index subgroup $\HH < \GG$ and an element $z \in \HH$ of the form 
\begin{equation}\label{eq:z_pos_measure}
z =  \clamt{c}{1}  \qquad  c \ne 0
\end{equation}
such that the hitting measure satisfies $\mu_\HH(z) >0$  	
\end{lem}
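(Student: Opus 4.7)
The plan is to realize the desired $z$ as a commutator of two elements in the support of $\mu$, and to construct $\HH$ by pulling back a well-chosen finite quotient of the abelianization $\GG^{\mathrm{ab}} := \GG/[\GG,\GG]$.

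The underlying algebraic observation is that every commutator in $\AA(\FF)$ has $\lambda$-coordinate $1$; a direct computation shows that $[\clamt{a}{\alpha},\clamt{b}{\beta}]=\clamt{a(1-\beta)+b(\alpha-1)}{1}$. Consequently $[\GG,\GG]\subseteq \ker\lambda$. If some $s$ in the support of $\mu$ already has the form $\clamt{c}{1}$ with $c\ne 0$, then $\HH:=\GG$ and $z:=s$ immediately satisfy $\mu_\HH(z)=\mu(s)>0$. So I may assume that no nontrivial element of the support of $\mu$ lies in $\ker\lambda$; equivalently, $\pi(s)\ne 0$ in $\GG^{\mathrm{ab}}$ for every nontrivial $s$ in the support of $\mu$, where $\pi\colon\GG\to \GG^{\mathrm{ab}}$ is the abelianization.

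Since $\GG$ is non-abelian and the support of $\mu$ generates $\GG$, I can find $x,y$ in the support of $\mu$ with $[x,y]\ne 1$; set $z:=[x,y]=xyx^{-1}y^{-1}$, so $z=\clamt{c}{1}$ with $c\ne 0$. Consider the length-four Cayley-graph path
\[
1\to x\to xy\to xyx^{-1}\to xyx^{-1}y^{-1}=z,
\]
whose steps $x,y,x^{-1},y^{-1}$ all lie in the support of $\mu$ by symmetry. If $\HH$ is a finite-index subgroup of $\GG$ that contains $z$ but avoids the three intermediate vertices $x,xy,xyx^{-1}$, then with positive probability the random walk traces exactly this path, forcing $\tau_\HH=4$ and $X_{\tau_\HH}=z$, and hence
\[
\mu_\HH(z)\ge \mu(x)\mu(y)\mu(x^{-1})\mu(y^{-1})>0.
\]
The whole problem therefore reduces to producing such an $\HH$.

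To build $\HH$ I would project via $\pi$: the intermediate points become $\pi(x)$, $\pi(x)+\pi(y)$, and $\pi(y)$ in $\GG^{\mathrm{ab}}$, while $\pi(z)=0$ automatically. Under the running assumption $\pi(x)\ne 0$ and $\pi(y)\ne 0$. In the generic sub-case $\pi(x)+\pi(y)\ne 0$, residual finiteness of the finitely generated abelian group $\GG^{\mathrm{ab}}$ supplies a finite quotient $q\colon\GG^{\mathrm{ab}}\to Q$ in which all three classes remain nonzero, and $\HH:=\ker(q\circ\pi)$ is a finite-index normal subgroup of $\GG$ with exactly the required membership pattern. In the remaining sub-case $\pi(x)+\pi(y)=0$ we have $xy\in[\GG,\GG]\subseteq \ker\lambda$, so $xy=\clamt{c'}{1}$; the condition $[x,y]\ne 1$ rules out $y=x^{-1}$, hence $c'\ne 0$. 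I then switch to the length-two path $1\to x\to xy$ and take $\HH$ to be the pullback of a finite quotient of $\GG^{\mathrm{ab}}$ witnessing $\pi(x)\ne 0$; this $\HH$ contains $xy$ (because $\pi(xy)=0$) but not $x$, so $z:=xy$ satisfies $\mu_\HH(z)\ge \mu(x)\mu(y)>0$. The only real obstacle is the bookkeeping needed to cover these sub-cases without omission; beyond the commutator identity and residual finiteness of finitely generated abelian groups, nothing else is needed.
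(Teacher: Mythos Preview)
Your proof is correct and follows essentially the same route as the paper: realize $z$ as a commutator of two non-commuting elements in the support of $\mu$, then pull back a finite quotient of a finitely generated abelian image of $\GG$ chosen so that the intermediate vertices of the short Cayley path from $1$ to $z$ miss $\HH$. The paper works with the image $\lambda(\GG)\subset\FF^\times$ rather than $\GG^{\mathrm{ab}}$ and sidesteps your case split by replacing $a$ with $a^{-1}$ to arrange $\lambda(ab)\ne 1$; one minor imprecision in your write-up is the word ``equivalently'' in the second paragraph --- $s\notin\ker\lambda$ implies $\pi(s)\ne 0$ (via $[\GG,\GG]\subseteq\ker\lambda$) but not conversely --- though only that implication is used, so the argument is unaffected.
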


\begin{proof}
Because $\GG$ is not abelian, it contains a non-trivial commutator.
Furthermore, since $\mu$ is adapted,
we can find non-commuting elements $a,b \in \GG$
so that $\mu(a)>0$ and $\mu(b)>0$.
Because $a$ and $b$ are non-commuting, it follows that $a \ne 1$ and $b \ne 1$ so we can assume that $\lambda(a) \ne  1$ and $\lambda(b) \ne 1$, otherwise we can conclude the proof with  $\HH=\GG$ and $z=a$ or $z=b$.
By possibly replacing $a$ with $a^{-1}$ we can further assume that $\lambda(ab) \ne 1$.
Let $z:=[a,b]$. It follows that indeed $z =  \clamt{c}{1}$ with $c\ne 0$.
  Because $\lambda(\GG) \subset \FF^\times$ is a finitely generated abelian group it is residually finite, so there exists
a finite index subgroup $\Lambda_0 < \lambda(\GG)$ so that $\lambda(a),\lambda(b),\lambda(ab) \not\in \Lambda_0$. Let $\HH = \lambda^{-1}(\Lambda_0)$, then $\HH < \GG$ is a finite index subgroup and $z \in \HH$.
By the construction of $\HH$, we have $a,ab,aba^{-1} \not \in \HH$.
It follows that $\mu_\HH(z) \geq 
\mu(a^{-1})\mu(b^{-1})\mu(a)\mu(b) >0$
\end{proof}

Thus, by replacing $(\GG,\mu)$ with $(\HH,\mu_\HH)$ we further assume without loss of generality that $\mu(z) >0$ for some $z \in \GG$ satisfying \eqref{eq:z_pos_measure}.

We introduce a bit more notation. Let
\begin{equation}
\rho(x) = -\log|\lambda(x)| .
\end{equation}

$(X_t)_t$ denotes a discrete time random walk on $\mathbb{G}$ such that the jumps $S_t = X_{t-1}^{-1} X_t$ are identically distributed with distribution $\mu$.
$\Pr_x, \E_x$  denote the probability measure and expectation
of random walks with $X_0= x$. Specifically, $\Pr_x [ X_{t+1} = y \ | \ X_t = z ] = \mu(z^{-1} y) $.

As in Section \ref{scn: RW on reals}, for $A \subset \RR$, we use the notation
$$ \tau_A = \inf \set{ t \geq 0 \ : \ \rho(X_t) \in A } \AND \sigma_A = \inf \set{ t \geq 0 \ : \ \rho(X_t) \not\in A } . $$
For $r >0$ we use the abbreviations  $\tau_r = \tau_{[-r,r]} , \sigma_r = \sigma_{[-r,r]}$.

We define a function $f:\GG \to [0,\infty)$ as follows:
\begin{equation}\label{eq:PLHF_G}
f(x)= \lim_{k \to \infty} r_k \Pr_x [|c(X_{\sigma_{r_k}})| < 3 ],
\end{equation}
where 
$(r_k)_k$ is some increasing sequence of integers for which the limit in \eqref{eq:PLHF_G} exists.
We will prove Theorem \ref{thm:2x2} by showing that $f$ given by \eqref{eq:PLHF_G} satisfies all the requirements.

\subsection{Preliminary lemmas}

There are fairly straightforward bounds on  $|\rho(x)|$ and $|c(x)|$ which we now note, omitting the simple proof.
\begin{lem}\label{lem:alpha_distance}
There exists a constant $K >0$ so that for any $x \in \GG$

$$ |\rho(x)| \le K |x| \mbox{ and }
 |c(x) | \le e^{K|x|}$$
\end{lem}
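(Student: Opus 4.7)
The proof is essentially a direct verification using the multiplicative structure of $\AA(\FF)$. The plan is as follows.

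First observe that $\lambda:\GG \to \FF^{\times}$ is a group homomorphism, since multiplying two matrices in $\AA(\FF)$ gives
\[
\Clamt{c_1}{\lambda_1}\Clamt{c_2}{\lambda_2} = \Clamt{\lambda_1 c_2 + c_1}{\lambda_1 \lambda_2} .
\]
Taking $-\log|\cdot|$ on both sides shows that $\rho:\GG \to (\R,+)$ is a group homomorphism. Hence if we write $x = s_1 \cdots s_n$ as a geodesic word in the generators ($n = |x|$, $s_i \in S$), the triangle inequality gives
\[
|\rho(x)| \;\le\; \sum_{i=1}^n |\rho(s_i)| \;\le\; K_1\, |x|, \qquad K_1 := \max_{s \in S} |\rho(s)|.
\]
This handles the first bound.

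For the second bound, iterating the multiplication formula above yields the explicit expression
\[
c(x) \;=\; \sum_{i=1}^{n} \lambda(s_1)\lambda(s_2) \cdots \lambda(s_{i-1}) \, c(s_i).
\]
Now apply the absolute value $|\cdot|$ of the local field $\FF$. Since any local-field absolute value is sub-additive (whether Archimedean or non-Archimedean), and since $|\cdot|$ is multiplicative, writing $M := \max_{s \in S}|\lambda(s)|$ and $C := \max_{s \in S}|c(s)|$ gives
\[
|c(x)| \;\le\; \sum_{i=1}^n M^{i-1} \cdot C \;\le\; n\, C\, M^{n-1}.
\]
Finally, choose any $K \ge \max\{K_1,\; \log(M) + 1\}$ large enough that $n C M^{n-1} \le e^{K n}$ for all $n \ge 1$; since $n$ grows only linearly and $e^{K n}/M^{n-1}$ grows exponentially as soon as $K > \log M$, such a $K$ exists (depending only on $S$ and the absolute value $|\cdot|$). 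This yields $|c(x)| \le e^{K|x|}$.

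There is no genuine obstacle here: the only point to be slightly careful about is that $\FF$ may be non-Archimedean, but this only makes the triangle inequality stronger (the ultrametric bound $|a+b| \le \max(|a|,|b|)$ still implies $|a+b| \le |a|+|b|$), so the same estimate works uniformly. The lemma depends on $S$ only through the finitely many quantities $\max_s|\rho(s)|$, $\max_s|\lambda(s)|$, $\max_s|c(s)|$, which is harmless since the word metrics associated to different finite symmetric generating sets are bi-Lipschitz.
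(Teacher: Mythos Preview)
Your proof is correct and is exactly the straightforward verification the authors have in mind; in fact the paper explicitly omits the proof, writing only that these are ``fairly straightforward bounds \ldots\ which we now note, omitting the simple proof.'' Your argument via the homomorphism property of $\rho$ and the iterated multiplication formula for $c$ is the natural one, and your care about the non-Archimedean case and the choice of $K$ is appropriate.
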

%

The following lemma is crucial for proving that the function constructed has at most linear growth, and for proving it is non-constant.
\begin{lem}
\label{lem: alpha difference}
There exists a constant $K>0$ (depending only on $\GG$ and $\mu$)
such that for all $x \in \GG$ and $r>0$ with $0 < \rho(x) < r$,
$$ \Pr_x [ |c(X_{\sigma_r}) - c(X_0) | > 2 \ , \ \tau_{(r,\infty)} < \tau_{(-\infty,0)} ] \leq \frac{K}{r} . $$
\end{lem}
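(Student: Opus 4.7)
The plan is to exploit the cocycle structure of the affine group together with the random-walk estimates of Section \ref{scn: RW on reals}. The multiplication rule
\[ \Clamt{c_1}{\lambda_1}\Clamt{c_2}{\lambda_2} = \Clamt{\lambda_1 c_2 + c_1}{\lambda_1\lambda_2} \]
gives the telescoping identity $c(X_t) - c(X_{t-1}) = \lambda(X_{t-1})\,c(S_t)$, where $S_t = X_{t-1}^{-1}X_t$. Setting $Y_t := \rho(X_t)$, the triangle inequality then yields
\[ |c(X_{\sigma_r}) - c(X_0)| \;\leq\; \sum_{t=1}^{\sigma_r} e^{-Y_{t-1}}\,|c(S_t)|. \]
Since $|\rho(s)| \leq K|s|$ by Lemma \ref{lem:alpha_distance} and $\mu$ is smooth, $(Y_t)_t$ is a symmetric real-valued random walk with exponentially tailed increments, so all the estimates from Section \ref{scn: RW on reals} apply.

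By Markov's inequality it is enough to bound
\[ \E_x\Bigl[\,\ind{B}\sum_{t=1}^{\sigma_r} e^{-Y_{t-1}}|c(S_t)|\,\Bigr], \qquad B := \{\tau_{(r,\infty)} < \tau_{(-\infty,0)}\}, \]
by $O(1/r)$. I would expand this as a sum over $t$ and condition on $\F_{t-1} := \sigma(X_0,\dots,X_{t-1})$. On the event $\{Y_0,\ldots,Y_{t-1}\in[0,r]\}$ the indicator $\ind{B}$ depends only on the post-$(t{-}1)$ path, so by the strong Markov property
\[ \E_x\bigl[\,|c(S_t)|\,\ind{B}\,\big|\,\F_{t-1}\,\bigr] \;=\; F(X_{t-1})\,\ind{Y_0,\ldots,Y_{t-1}\in[0,r]}, \quad F(g) := \E_g\bigl[\,|c(S_1)|\,\ind{B}\,\bigr]. \]
A first-step analysis in $S_1$, combined with Lemma \ref{lem: Green function} (translated from $[-r,r]$ to $[0,r]$, giving $\Pr_u[B]\lesssim u/r$ for $u\in[0,r]$), produces
\[ F(g) \;\lesssim\; \frac{1}{r}\bigl(\rho(g)\,\E[|c(S)|] + \E[|c(S)|\,|\rho(S)|]\bigr), \]
up to a super-polynomially small correction coming from the event that $S_1$ alone overshoots the top of the interval (this uses the exponential tail of $|S|$).

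Substituting this back into the sum over $t$ collapses the problem to bounding
\[ \E_x\Bigl[\,\sum_{t=0}^{\sigma_{[0,r]}-1} e^{-Y_t}(Y_t+1)\,\Bigr] \]
by a constant independent of $r$ and $x$. I would split the inner sum according to the integer band $m\leq Y_t<m+1$: in band $m$ each summand is at most $(m+2)e^{-m}$, and the occupation time of $[0,m+1]$ before $(Y_t)$ exits $[0,r]$ has exponential tail at rate $\asymp m^{-2}$ by Lemma \ref{lem: time in an interval} and therefore mean $O(m^2)$. The resulting series $\sum_{m\geq 0}(m+2)e^{-m}\cdot O(m^2)$ converges, delivering the desired $O(1/r)$ estimate; Markov's inequality then finishes.

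The main obstacle is the interaction of the possibly large factor $|c(S_t)|$, controlled only by $e^{K|S_t|}$, with the conditioning on $B$. Pulling out a crude pointwise bound for $|c(S_t)|$ loses large polynomial factors in $r$ and breaks the argument; the first-step analysis in the second paragraph sidesteps this by directly averaging $|c(S_t)|$ against the probability that $B$ still occurs from $X_t$ onwards, thereby gaining the crucial $u/r$ decay factor from Lemma \ref{lem: Green function}. The integrability of $|c(S)|$ and $|c(S)|\,|\rho(S)|$, automatic for finitely supported $\mu$, is where the smoothness level of the courteous measure $\mu$ enters and is absorbed into the constant $K$.
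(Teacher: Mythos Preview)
Your overall decomposition (cocycle identity, banding by the value of $Y_t$, occupation-time bounds from Lemma~\ref{lem: time in an interval}, gambler's-ruin input from Lemma~\ref{lem: Green function}) matches the paper's, but there is a genuine gap at the point where you invoke $\E[|c(S)|]$ and $\E\bigl[|c(S)|\,|\rho(S)|\bigr]$. Smoothness of $\mu$ only guarantees $\E[e^{\zeta_0 |S|}] < \infty$ for \emph{some} $\zeta_0 > 0$, while Lemma~\ref{lem:alpha_distance} gives $|c(s)| \leq e^{K|s|}$ for a constant $K$ that depends on the embedding of $\GG$ into $\AA(\FF)$ and over which you have no control. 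There is no reason for $K \leq \zeta_0$, so $\E[|c(S)|]$ may well be infinite; your Markov-inequality step then has no content, and the ``super-polynomially small correction'' you mention does not rescue it either, since $\E\bigl[|c(S_1)| \1{|S_1| > L}\bigr]$ can be infinite for every $L$. Your final sentence asserts precisely the false implication.

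The paper avoids ever taking a first moment of $|c(S)|$. It fixes $\eps > 0$ small enough that $\E[|c(S_1)|^{2\eps}] < \infty$ (always possible by smoothness together with Lemma~\ref{lem:alpha_distance}), and for each band $m$ bounds the \emph{probability} $\Pr_x[A_m e^{-m} > 2^{-m},\, B]$ directly, where $A_m$ collects the $|c|$-contributions from visits to $[m,m+1)$. Markov's inequality is applied to $|c(S_{t+1})|^{\eps}$, producing a factor $\beta^{-\eps}$ rather than $\beta^{-1}$; the choice $\beta = C^{-1} m^{-3}(e/2)^m$ then makes the series over $m$ converge. Your argument is salvageable along the same lines---e.g.\ apply the subadditivity $(\sum a_i)^{\eps} \leq \sum a_i^{\eps}$ before Markov, at the cost of replacing $e^{-Y_t}$ by $e^{-\eps Y_t}$ in the occupation-time sum---but as written it proves the lemma only when $\mu$ has sufficiently light tails relative to the embedding constant, in particular for finitely supported $\mu$.
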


\begin{proof}

By Lemma \ref{lem:alpha_distance}, because $\mu$ is smooth, there exists $\eps>0$ so that
$\E[ |c(S_1)|^{2\eps} ] = \E [ |c(X_1)|^{2\eps} ] <\infty$.
Let $B = \set{ \tau_{(r,\infty)} < \tau_{(-\infty,0)} }$ be the event that
the walk $(\rho(X_t))_t$ exits $[0,r]$ from the right.

Define
$$ A_m = \sum_{t=0}^{\sigma_{[0,r]} -1 } |c(S_{t+1}) | \1{ \rho(X_t) \in [m,m+1) } .$$
The quantity $A_m$ will be used to control the contribution to $|c(X_{\sigma_r}) - c(X_0) |$ from increments of $c$ at the times $\rho(X_t)$ is in the interval $[m,m+1)$, before exiting $[0,r]$.

We have that for any $x$ with $\rho(x) >0$, by Lemma \ref{lem: Green function} and the Markov property
at time $t$,
\begin{align*}
\E_x [ |c(S_{t+1})|^{\eps} & \ind{ \{ t < \sigma_{r} \} } \1{ \rho(X_t) \in [m,m+1) } \ind{B} ] \\
& \leq \sum_s \mu(s) |c(s)|^{\eps} \cdot \sup_{y \ : \ \rho(y) < m+1 } \Pr_{ys} [ B ] \cdot
\Pr_x [ \rho(X_t) \in [m,m+1) , t < \sigma_{[0,r]}  ] \\
& \leq C \cdot \sum_s \mu(s) |c(s)|^{\eps} \cdot \frac{m+1 + |\rho(s)|}{r} \cdot
\Pr_x [ \rho(X_t) \in [m,m+1) , t<\sigma_{[0,r]} ] \\
& = \frac{C}{r} \cdot \E [ |c(X_1)|^{\eps} (m+1+ |\rho(X_1)| ) ] \cdot  \Pr_x [ \rho(X_t) \in [m,m+1) , t<\sigma_{[0,r]} ] .
\end{align*}
By Lemma \ref{lem:alpha_distance}, because $\mu$ is smooth  $\E [ |\rho(X_1)|^2 ] < \infty$, so
using Cauchy-Schwarz,
\begin{align*}
\E [ |c(X_1)|^{\eps} & (m+1 + |\rho(X_1)| ) ] \\
&  \leq (m+1) \E [|c(X_1)|^{\eps} ] + \sqrt{ \E [ |c(X_1)|^{2\eps}]
\cdot \E [ |\rho(X_1)|^2 ] }  \leq C m
\end{align*}
where $C>0$ is a constant that depends only on $\E [ |c(X_1)|^{2\eps} ] , \E [ |\rho(X_1)|^2 ]$.
Thus, for some constant $C>0$, by Markov's inequality,
\begin{align}
\label{eqn: alpha jump}
\Pr_x [ |c(S_{t+1})| > \beta & \ , \  t < \sigma_{r} \ , \ \rho(X_t) \in [m,m+1) \ , \  B ] =\nonumber \\
\Pr_x [ |c(S_{t+1})|^{\epsilon} > \beta^{\epsilon} & \ , \  t < \sigma_{r} \ , \ \rho(X_t) \in [m,m+1) \ , \  B ] \leq \nonumber \\
&  \leq
\frac{Cm}{\beta^{\epsilon} r} \cdot \Pr_x [ \rho(X_t) \in [m,m+1) , t<\sigma_{[0,r]} ]
\end{align}

Define
$$ V_m = \sum_{t=0}^{\sigma_{[0,r]}-1} \1{ \rho(X_t) \in [m,m+1) } , $$
the number of visits to $[m,m+1)$ by the walk $(\rho(X_t))_t$, before exiting $[0,r]$.
Lemma \ref{lem: left visits} together with Lemma \ref{lem: Green function}
tell us that for some constant $c>0$,
$$ \Pr_x [ V_m > v ] \leq e^{-c v / m^2 }  \AND \Pr_x [ V_m > v \ , \ B ] \leq e^{-c v / m^2 } \cdot \frac{Cm}{r} . $$
So we may choose $C>0$ large enough such that for all $m$,
$$ \Pr_x [ V_m > C m^3 \ , \ B ] \leq \frac{1}{r} \cdot e^{-m}  \AND \E_x[V_m] \leq C m^2 . $$

Summing \eqref{eqn: alpha jump} over $t$, we have that
\begin{align*}
\Pr_x [ \exists \ t < \sigma_{r} & \ , \  |c(S_{t+1})| > \beta \ , \ \rho(X_t) \in [m,m+1) \ , \  B ] \\
& \leq \E_x [ V_m ] \cdot \frac{Cm}{\beta^{\eps} r} \leq \frac{C' m^3}{\beta^{\eps} r} .
\end{align*}
Taking $\beta = C^{-1} m^{-3} (e/2)^m$,
\begin{align*}
\Pr_x [ A_m e^{-m} > 2^{-m} \ , \ B ] & \leq
\Pr_x [ A_m e^{-m} > 2^{-m} \ , \ V_m \leq Cm^3 \ , \ B ] + \Pr_x [ V_m > Cm^3 \ , \ B ] \\
& \leq \frac{C' m^3 }{\beta^\eps r}  + \frac{1}{r} e^{-m} \leq \frac{1}{r} e^{-\delta m} ,
\end{align*}
for some constant $\delta = \delta(\eps) >0$.
Summing over $m \geq 0$ we obtain
$$ \Pr_x [ \exists \ m \geq 0 \ , \ A_m e^{-m} > 2^{-m} \ , \ B ] \leq \frac{C}{r} . $$

Since $c(xy) = c(x) + \lambda(x) c (y)$, we have
$$ c(X_{\sigma_r}) - c(X_0) = \sum_{t=0}^{\sigma_r-1} c(S_{t+1}) \cdot \lambda(X_t) , $$
and so
$$ |c(X_{\sigma_r}) - c(X_0)| \leq \sum_{t=0}^{\sigma_r-1} |c(S_{t+1})| \cdot e^{- \rho(X_t) }
\leq \sum_m A_m e^{-m} . $$
On the event $B$ we have that $A_m=0$ for all $m<0$.
So the event $B \cap \set{ \forall \ m \geq 0 \ , A_m e^{-m} \leq 2^{-m} }$ implies that
$$  |c(X_{\sigma_r}) - c(X_0)| \leq \sum_{m \geq 0} 2^{-m} = 2 . $$
Thus, 
for any $x \in \GG$ with $\rho(x) > 0$,
\begin{align*}
 \Pr_x [ |c(X_{\sigma_r}) - c(X_0) | > 2 , B ] \leq \frac{C}{r} .
\end{align*}
\end{proof}

As in the previous section, just before Lemma \ref{lem:Msep_prob_small}, 
we denote the maximal cardinality of a $1$-separated subset of
$\set{ \rho(X_{t}) \ : \ t < \sigma_r } \cap I$ by $\MSep_r(I)= \MSep_r((\rho(X_t))_t,I)$.

The following is a key lemma for proving the function $f$ is non-constant.

\begin{lem}
\label{lem: comm argument Msep}
There exist $C,q,\epsilon >0$, depending only on $\GG,\mu$, such that 
$$ \Pr_x \left[ |c(X_{\sigma_r})  | < 3 \ | \ \MSep_r((-\infty,-q)) = n \right] \leq C e^{-\epsilon n} . $$
for all $ n \in \Z_+$, $r >0$ and 
 $x \in \GG$.

\end{lem}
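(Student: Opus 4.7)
The plan is to exploit the algebraic identity $\lambda(z) = 1$ for $z = \clamt{c_0}{1}$: multiplying by $z$ leaves $\rho = -\log|\lambda|$ unchanged while shifting $c$ by $\lambda(X_t) c_0$. Each ``well-separated large-$|\lambda|$'' visit counted by $\MSep_r((-\infty,-q))$ thus becomes an opportunity for a large, geometrically-scaled kick to $c$, and I aim to show these kicks combine to prevent $|c(X_{\sigma_r})|$ from being small.

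The first move is to realize the walk as $X_t = z^{G_0} U_1 z^{G_1} U_2 \cdots$, where $(U_k)_{k \geq 1}$ are i.i.d.\ with the law of $\mu$ conditioned on $\{S \neq z\}$, and $(G_k)_{k \geq 0}$ are independent i.i.d.\ Geometric variables with $\Pr[G_k = m] = (1 - \mu(z))\mu(z)^m$ for $m \geq 0$. Because $\lambda(z) = 1$, the $\rho$-trajectory, the exit time $\sigma_r$, and the value of $\MSep_r((-\infty,-q))$ are all measurable with respect to the skeleton $(U_k)$. Let $T_1 < \cdots < T_n$ be greedy stopping times witnessing $\MSep_r = n$; a standard greedy argument shows they realize $\MSep_r$ up to a universal multiplicative constant, which can be absorbed into the exponent. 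Letting $\tilde T_i$ denote the skeleton indices corresponding to $T_i$, and using $c(xy) = c(x) + \lambda(x) c(y)$ together with $c(z^m) = m c_0$, one computes
\begin{equation*}
c(X_{\sigma_r}) = C_0 + c_0 \sum_{k=0}^{\tilde\sigma_r - 1} \Lambda_k G_k,
\end{equation*}
where $\Lambda_k = \lambda(U_1 \cdots U_k)$, $\tilde\sigma_r$ is the skeleton exit time, and $C_0$ is $(U_k)$-measurable. By the $1$-separation of the $\rho(X_{T_i})$'s, the magnitudes $|c_0 \Lambda_{\tilde T_i}|$ exceed $|c_0| e^q$ and are pairwise $e$-separated.

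The next step is to condition on $(U_k)$ and on $(G_k)_{k \notin \{\tilde T_i\}}$, absorbing all other randomness into one fixed $A \in \FF$. The claim reduces to the uniform anti-concentration bound
\begin{equation*}
\Pr \Bigl[\, \Big| A + c_0 \sum_{i=1}^{n} \Lambda_{\tilde T_i} G_{\tilde T_i} \Big| < 3 \,\Bigr] \leq C e^{-\epsilon n}
\end{equation*}
for $n$ i.i.d.\ Geometric$(\mu(z))$ variables. Choose $q$ large enough that $|c_0| e^q > 6$, and sort the coefficients $|c_0 \Lambda_{\tilde T_i}|$ in increasing order. Iterating from the largest downwards: for any fixed choice of the smaller $G$'s, consecutive values of the largest $c_0 \Lambda_{\tilde T_i} G_{\tilde T_i}$ are spaced by more than $6$ in $\FF$, so at most one nonnegative integer value of that $G$ places the total sum in a disk of radius $3$. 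A careful weighted count of valid configurations, combined with the fact that no atom of the Geometric distribution exceeds $1 - \mu(z)$, yields the desired exponential decay with $\epsilon = \log(1/(1 - \mu(z)))$.

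The main obstacle is this last anti-concentration step, especially when $\FF$ is a complex local field and the $\Lambda_{\tilde T_i}$ carry arbitrary phases. One must bound the weighted configuration sum $\sum_{\textrm{valid }(g_i)} \mu(z)^{\sum_i g_i}$ uniformly in $A$; the geometric separation by factor $e$, combined with $|c_0| e^q > 6$, is what makes this count finite and the overall bound genuinely exponential in $n$.
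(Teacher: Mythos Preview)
Your overall strategy---exploit $\lambda(z)=1$ so that the $\rho$-trajectory and $\MSep_r$ are determined by a skeleton of non-$z$ steps, then use the $e$-separation of the $|\lambda(X_{T_i})|$ for anti-concentration of $c(X_{\sigma_r})$---matches the paper's. The gap is at the anti-concentration step. You reduce to bounding $\Pr\bigl[\,|A + c_0 \sum_{i=1}^n \Lambda_i G_i| < 3\,\bigr]$ for i.i.d.\ Geometric$(\mu(z))$ variables $G_i$ with $e$-separated $|c_0 \Lambda_i|>6$, and assert exponential decay ``by a careful weighted count.'' But the one concrete mechanism you offer---for fixed values of the other $G$'s at most one value of the largest-coefficient $G$ works---yields only a single factor of $1-\mu(z)$; it does not iterate, since once that variable is pinned to its unique admissible value its randomness is spent. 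Bounding $\sum_{\text{valid }g}(1-\mu(z))^n \mu(z)^{\sum g_i}$ uniformly in $A$ is a genuine problem: already over $\R$ with $\Lambda_i=e^i$ and $|A|$ large there are many admissible tuples, and over $\C$ or a non-archimedean field the phases permit cancellations so that ``largest term dominates'' fails for unbounded integer (as opposed to $\pm 1$) multipliers. You correctly flag this as the main obstacle, and it is not resolved.

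The paper avoids this difficulty by using the symmetry $\mu(z)=\mu(z^{-1})$, which your decomposition throws away by placing $z^{-1}$ in the skeleton. Recording instead $\xi_t\in\{-1,0,+1\}$ according as $S_{t+1}$ equals $z$, $z^{-1}$, or neither, and conditioning on everything except these signs, one obtains $c(X_{\sigma_r})=\Pi + c_0\sum_{t\in T'}\xi_t\lambda(X_t)$ with $(\xi_t)_{t\in T'}$ uniform on $\{-1,1\}^{T'}$. For two distinct sign vectors the difference has a dominant term of size at least $2|c_0|e^q$, and the $e$-separation forces the remaining terms to contribute at most $2|c_0|e^q\sum_{k\geq 1}e^{-k}$ in absolute value; choosing $q$ with $2|c_0|e^q(1-\sum_{k\geq 1}e^{-k})>6$ shows at most one sign vector can land in any radius-$3$ ball. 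This gives the clean bound $2^{-|T'|}$ immediately, after which a binomial tail estimate converts $|T'|$ to $n$. The finiteness of $\{-1,1\}^{T'}$ is exactly what makes the geometric-separation argument close, and it is unavailable in your Geometric-variable formulation.
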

\begin{proof}
By adjusting the constant in the statement of the lemma we may assume without loss of generality that $n \geq 1$.
Let $z \in \GG$ satisfy \eqref{eq:z_pos_measure} with $\mu(z)>0$. Such an element exists by our assumptions.
Choose $q >0$ which satisfies
\begin{equation}\label{eq:q_cond}
  q >  \log\left(\frac{3}{|c(z)|(1-\sum_{k=1}^\infty e^{-k})} \right), 
\end{equation}
and let $I = (-\infty,-q)$.

Consider the set $R = \set{ \rho(X_{t}) \ : \ t < \sigma_r } \cap I$.
Define the event
\begin{equation}\label{eq:Ee_n_Msep}
\Ee_n = \set{\MSep_r(I) = n}
\end{equation}
Note that the event $\Ee_n$ is measurable with respect to the (set-valued) random variable $R$.

Assume we are in  the event $\Ee_n$.
Let $A=A(R) \subset I\cap R$ be a $1$-separated set of size $n$.
Formally, $A$ is a set-valued random variable which is measurable with respect to the random variable $R$, 
and which on the event $\Ee_n$ is a.s.\ a $1$-separated subset $A \subset I\cap R$ of size $|A|=n$.

For $\rho \in R$ let $t_\rho = \inf \set{ t  \ge 0\ : \ \rho(X_{t}) = \rho }$
and  $T = \set{ t_\rho \ : \ \rho \in A }$.
For $t \ge 0$ define
$$ \xi_t =
\begin{cases}
1  & \textrm{ if } S_{t+1} = z \\
-1 & \textrm{ if } S_{t+1}= z^{-1} \\
0 & \textrm{ otherwise}
\end{cases}
$$
and let $T' = \set{ t \in T~:~ |\xi_t|=1}$. Note that $S_{t+1}= z^{\xi_t}$ for $t \in T'$.

Let $(\check S_t)_t$ denote the sequence obtained from $(S_t)_t$ by changing every occurrence of $z^{-1}$ at times in $T'$ to $z$:
$$\check S_{t} =
\begin{cases}
z & t \in T'\\
S_t & \mbox{otherwise}
\end{cases}
$$

We claim that:
\begin{align}
\label{conditional_N}
& \begin{array}{l}
\textrm{Conditioned on the event $\Ee_n$ and on $R$, the random variables} \\
\textrm{$(|\xi_{t_\rho}|)_{\rho \in A}$ are i.i.d.\ and $\Pr[ t_\rho \in T'\mid \ \rho \in A\ , R ] = 2\mu(z)$.}
\end{array}
\end{align}

For now let us proceed with the proof assuming \eqref{conditional_N}. Let
$$\Pi =  c(X_0) + \sum_{t=0}^{\sigma_r-1}\1{t \not\in T'}\lambda(X_{t})c(\check S_{t+1}).$$
Since $\lambda(z)=1$, it follows that $\lambda(\check S_j)= \lambda(S_j)$ so $\lambda(X_t)= \prod_{j=1}^{t-1} \lambda( \check S_{j})$ is measurable with respect to $(\check S_t)_t$, and hence also 
$\Pi$ is measurable with respect to  $(\check S_t)_t$.

Because $c(X_{\sigma_r}) = c(X_0) + \sum_{t=0}^{\sigma_r-1}\lambda(X_t)c(S_{t+1})$ we have:
$$
c(X_{\sigma_r}) = \Pi + \sum_{t \in T'}\lambda(X_{t})c(S_{t+1})
 = \Pi + c(z)\sum_{t \in T'}\lambda(X_{t}) \xi_t
$$

Note that for any $t \in T'$ we have that $\rho(X_t) \in R \subset I$ so $-\rho(X_t) > q$.

Also, for all $\rho \neq \rho' \in A$ we have that $|\rho - \rho' | \geq 1$, which implies that
for any $\xi'' \neq \xi' \in \set{-1,1}^{T'}$, there is some $\rho \in A$ such that
\begin{align*}
\big| c(z)\sum_{t \in T'}\lambda(X_{t})  (\xi''_t-\xi'_t)  \big| & \geq 2 |c(z)|e^{-\rho} \cdot \sr{ 1 - \sum_{k=1}^\infty e^{-k}  }
 >   6 ,
\end{align*}
where the last inequality holds by \eqref{eq:q_cond} because $e^{-\rho} > e^q$.
Thus, for any $\alpha \in \FF$ there is at most one possible vector $\xi \in \set{-1,1}^{T'}$ for which
$$ \big| \alpha + c(z) \cdot \sum_{t \in T'} \xi_t\lambda(X_{t})  \big| < 3 . $$

Note that the set $T'$ is measurable with respect to  $(\check S_t)_t$.
From the fact that $(S_t)_t$ are i.i.d and the definition of  $(\check S_t)_t$, it follows that conditioned on $(\check S_t)_t$
the distribution of $(\xi_t)_{t \in T'}$  is uniform on $\set{-1,1}^{T'}$.
Thus we have
\begin{align}
\label{eqn:N' bound}
\Pr_x [ | c(X_{\sigma_r})  | < 3 \ |  \Ee_n ] & = \E_x \Pr_x \Big[  \big|  \Pi+ c(z) \cdot
\sum_{t \in T'} \xi_{t} \lambda(X_{t})  \big| < 3
\ \Big| \ (\check S_t)_t \ , \Ee_n \Big] \nonumber \\
& \leq \E_x [2^{-|T'|} \ | \Ee_n ] \leq \Pr_x [ |T'|  < \mu(z) n \ | \Ee_n ] + 2^{-\mu(z) n} .
\end{align}

By \eqref{conditional_N}, conditioned on the event $\Ee_n$ and on the set $R$,
the distribution of $|T'|$ is binomial-$(n,2\mu(z))$.
Using a well known large deviation estimate for the binomial distribution

$$ \Pr_x [ |T'| < \mu(z) n \ | \Ee_n  ] \leq e^{-\epsilon n}. $$

for some constant $\epsilon=\epsilon(\mu(z))>0$.
So
combined with \eqref{eqn:N' bound} we have that
\begin{align}
\label{eqn:N bound}
\Pr_x [ |c(X_{\sigma_r}) | < 3  \ | \Ee_n ] \leq  e^{-\eps n} + 2^{-\mu(z) n} .
\end{align}

We now justify  \eqref{conditional_N}  by providing an alternative description of the process $(S_t)_t$.
The idea is that the process can be constructed  by sampling independent steps which are conditioned not to be $z$ or $z^{-1}$ and then ``spacing'' them with a geometrically distributed  number of steps each of which is equal to $z$ or $z^{-1}$ independently with equal probability. Formally:

Start with a sequence  $(\hat S_t)_t$ of  i.i.d  elements in $\GG$ each distributed according to $\mu$, conditioned  on the event
$\{ \hat{S}_t \not \in \{ z,z^{-1}\} \}$, and another sequence $(Z_t)_t$  of  i.i.d  elements in $\GG$ each distributed  so that

\begin{align}
\label{eqn:vec_C}
\Pr [ Z_{t} = z  ]& = \Pr [ Z_{t} = z^{-1} ] = \tfrac12 .
\end{align}

Let  $(T_j)_{j=1}^\infty$  be i.i.d.\
integer valued  random variables
with 
distribution $$\Pr [ T_j = k] =(2p)^{k}(1-2p) \mbox{ for }  k=0,1,2,\ldots,$$
where $p = \mu(z)$.
The processes $(T_j)_{j=1}^\infty$,  $(\hat S_t)_t$ and $(Z_t)_t$ are jointly
independent.

Now obtain a new process $(\tilde S_t)_t$ inductively as follows: Start with $L[1]=T_1$, $j[1] =1$ and  $k[1]=1$
and define inductively
$$
\left(L[t+1],j[t+1],k[t+1]\right) =\begin{cases}
(L[t] -1,j[t],k[t]+1) &  L[t] > 0\\
(T_{j[t]+1},j[t]+1,k[t]) & \mbox{otherwise}
\end{cases}
$$
For $t=1,2,\ldots$ let
$$
\tilde S_{t} = \begin{cases}
Z_{k[t]} &  L[t] >0\\
\hat S_{j[t]} & \mbox{otherwise}
\end{cases}
$$

It follows that $(\tilde S_t)_t$ are i.i.d, each distributed according to $\mu$. 
Thus, we may replace $(S_t)_t$ with $(\tilde S_{t})_t$.

Note that
the sets $R = \{ \rho(X_{t}) \ : \ t < \sigma_r  \} \cap I$ and  $A = A(R)$
are measurable with respect to $(\hat{S}_j)_j$.
For any $\rho \in T$
we have
$$\tilde S_{t_\rho} =
\begin{cases}
Z_{k[t_\rho]} & T_{j[t_\rho]} >0\\
\hat S_{j[t_\rho]} & \mbox{otherwise}
\end{cases}
$$
So $\tilde S_{t_\rho} \in \{z,z^{-1}\}$ if and only if $T_{j[t_\rho]} >0$.
Check that the process $(\hat{S}_j)_j$ is independent from $T_{j[t_\rho]}$ and $Z_{k[t_\rho]}$.
 From the above and the independence of $(T_j)_j , (\hat{S}_j)_j$ and $(Z_t)_t$  and the fact that
$\Pr[ T_j >0] = 2p$,  \eqref{conditional_N}  now follows.
\end{proof}

\subsection{Well defined and harmonic}

\begin{prop}
\label{prop: upper bound for negative rho}
There exists a constant $K>0$ such that
for all $x \in \GG$ and $r>0$ such that $-\tfrac{r}{2} <\rho(x) \leq 0$,
$$ \Pr_x [ |c(X_{\sigma_r}) | < 3 ] \leq \frac{K}{r} . $$
\end{prop}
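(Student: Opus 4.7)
The plan is to combine the two key lemmas already established, namely Lemma \ref{lem: comm argument Msep} (which gives an exponential bound on $\Pr_x[|c(X_{\sigma_r})|<3 \mid \MSep_r((-\infty,-q))=n]$) and Lemma \ref{lem:Msep_prob_small} (which says $\MSep_r((-\infty,-q))$ is unlikely to be small when the walk starts at $y\le 0$ with $-y<r/2$). The first step is to verify that the real-valued process $(\rho(X_t))_t$ satisfies the hypotheses of Section \ref{scn: RW on reals}: since $\lambda:\GG\to\FF^\times$ is a group homomorphism, $\rho(xy)=\rho(x)+\rho(y)$, so the increments $\rho(X_t)-\rho(X_{t-1})=\rho(S_t)$ are i.i.d., symmetric (because $\mu$ is symmetric and $\rho(s^{-1})=-\rho(s)$), mean zero, and have exponential tails by Lemma \ref{lem:alpha_distance} together with the smoothness of $\mu$. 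Thus both Lemma \ref{lem:Msep_prob_small} and the other estimates of Section \ref{scn: RW on reals} apply to $(\rho(X_t))_t$ with initial value $y=\rho(x)$.

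Fix $q,\eps$ from Lemma \ref{lem: comm argument Msep}. For $r<2q$ the inequality is trivial by choosing $K\ge 2q$, so assume $r\ge 2q$; then for $y=\rho(x)$ with $-r/2<y\le 0$ we have $r>2\max\{-y,q\}$, so Lemma \ref{lem:Msep_prob_small} yields $F(n):=\Pr_x[\MSep_r((-\infty,-q))\le n]\le C(n+1)/r$ for all $n\le \sqrt{r}$. The plan is then to write
\begin{equation*}
\Pr_x[|c(X_{\sigma_r})|<3] \;=\; \sum_{n=0}^{\infty} \Pr_x[|c(X_{\sigma_r})|<3 \mid \MSep_r((-\infty,-q))=n]\,\Pr_x[\MSep_r((-\infty,-q))=n],
\end{equation*}
bound the conditional probability by $Ce^{-\eps n}$ via Lemma \ref{lem: comm argument Msep}, and apply Abel summation to convert the estimate into one involving $F(n)$.

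Concretely, Abel summation yields
\begin{equation*}
\sum_{n=0}^{\infty} e^{-\eps n}\Pr_x[\MSep_r((-\infty,-q))=n] \;=\; (1-e^{-\eps})\sum_{n=0}^{\infty} e^{-\eps n} F(n).
\end{equation*}
Splitting the sum on the right at $n=\lfloor\sqrt{r}\rfloor$, the ``low'' range uses $F(n)\le C(n+1)/r$ and contributes at most $\frac{C}{r}\sum_{n\ge 0}(n+1)e^{-\eps n}=O(1/r)$, while the ``high'' range uses the trivial bound $F(n)\le 1$ and contributes at most $\sum_{n>\sqrt{r}} e^{-\eps n}=O(e^{-\eps\sqrt{r}})=o(1/r)$. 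Putting these together yields the desired bound $\Pr_x[|c(X_{\sigma_r})|<3]\le K/r$.

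There is no serious obstacle here: the result is essentially a routine synthesis of the preceding lemmas. The only subtle point is that a naive optimization (choosing a single threshold $n_0=O(\log r)$ to balance the two error terms) produces a suboptimal bound of order $\log r/r$; one really must use Abel summation (or equivalent integration by parts) to exploit the fact that $F(n)$ grows only linearly in $n$ together with the geometric decay $e^{-\eps n}$ to recover the sharp $1/r$ rate.
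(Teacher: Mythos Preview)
Your proposal is correct and follows essentially the same route as the paper: condition on $\MSep_r((-\infty,-q))=n$, bound the conditional probability by $Ce^{-\eps n}$ via Lemma~\ref{lem: comm argument Msep}, and use Lemma~\ref{lem:Msep_prob_small} to control $F(n)=\Pr_x[\MSep_r\le n]$, splitting the sum at $n=\sqrt{r}$. The only cosmetic difference is that the paper skips the Abel summation step by writing directly
\[
\Pr_x[|c(X_{\sigma_r})|<3]\le \sum_{n\ge 0}\Pr_x[|c(X_{\sigma_r})|<3\mid \MSep_r=n]\cdot \Pr_x[\MSep_r\le n],
\]
using the crude bound $\Pr[\MSep_r=n]\le \Pr[\MSep_r\le n]$; this is the same inequality your Abel summation produces up to the harmless factor $(1-e^{-\eps})$, so your remark that one ``really must use Abel summation'' is slightly overstated.
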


\begin{proof}
Let $q >0$, $I = (-\infty,-q)$ and $\MSep_r(I)$  be as  in Lemma \ref{lem: comm argument Msep}.
$$
\Pr_x [ |c(X_{\sigma_r}) | < 3 ] \leq \sum_{n=0}^\infty \Pr_x[ |c(X_{\sigma_r}) | < 3 \ | \ \MSep_r(I) = n] 
\cdot \Pr_x[ \MSep_r(I) \leq n]
$$

By Lemma \ref{lem:Msep_prob_small}, for all $n \leq \sqrt{r}$ we have  $\Pr_x[ \MSep_r(I) \leq n] \le \frac{C(n+1)}{r}$.
So using Lemma \ref{lem: comm argument Msep},
$$
\Pr_x [ |c(X_{\sigma_r}) | < 3 ] \le  \sum_{n \leq \sqrt{r} } \frac{C (n+1)}{r} e^{-\epsilon n} 
+ \sum_{n > \sqrt{r} } e^{-\eps n} \leq \frac{K}{r} . $$
\end{proof}

\begin{prop}
\label{prop: upper bound general}
There exists a constant $C>0$, depending on $\GG$ and $\mu$,  such that for all $x \in \GG$ and $r > 2|\rho(x)|$,
$$ \Pr_x [ |c(X_{\sigma_r}) | < 3 ] \leq \frac{C \max \set{ \rho(x) , 1 }  }{r} . $$
\end{prop}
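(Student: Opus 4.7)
The proposition extends Proposition \ref{prop: upper bound for negative rho} from the range $-r/2 < \rho(x) \le 0$ (where that proposition already gives the required bound $K/r$) to the range $0 < \rho(x) < r/2$. My plan for this latter range is to run the walk until it either exits $[-r,r]$ from the top without ever touching the negative half-line, or first crosses into $(-\infty,0]$, and then bootstrap off Proposition \ref{prop: upper bound for negative rho} via the strong Markov property.

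Concretely, set $\tau = \inf\{t \ge 0 \ : \ \rho(X_t) \le 0\}$ and decompose
\[
\Pr_x[|c(X_{\sigma_r})|<3] \le \Pr_x[\tau > \sigma_r] + \Pr_x[|c(X_{\sigma_r})|<3,\ \tau \le \sigma_r,\ \rho(X_\tau) > -r/2] + \Pr_x[\rho(X_\tau) \le -r/2,\ \tau \le \sigma_r].
\]
On $\{\tau > \sigma_r\}$ the walk $(\rho(X_t))$ stays in $(0,\infty)$ up to $\sigma_r$ and therefore exits $[-r,r]$ at the top; shifting by $r/2$ and applying Lemma \ref{lem: Green function} (with $r/2$ in place of $r$) to $\rho(X_t) - r/2$ gives $\Pr_x[\tau > \sigma_r] = (\rho(x)/r)(1+O(r^{-\delta})) \lesssim \rho(x)/r$. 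For the middle term, the strong Markov property at $\tau$ yields
\[
 \Pr_x\bigl[|c(X_{\sigma_r})|<3 \,\big|\, \F_\tau \bigr]\cdot \mathbf{1}_{\{\tau \le \sigma_r,\, \rho(X_\tau) > -r/2\}} \;\le\; \tfrac{K}{r},
\]
since, using $\rho(xy)=\rho(x)+\rho(y)$ and $c(xy)=c(x)+\lambda(x)c(y)$, the restart at $X_\tau$ puts us exactly in the hypothesis of Proposition \ref{prop: upper bound for negative rho}; integrating gives $\le K/r$. For the third term, landing in $(-\infty,-r/2]$ from a non-negative $\rho$-value forces a single jump of size at least $r/2$ before time $\sigma_r$, so Lemma \ref{lem: big jump} bounds it by $Cr^3 e^{-\eps r/2}+e^{-r}=o(1/r)$.

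Summing the three contributions yields $\Pr_x[|c(X_{\sigma_r})|<3] \le C\rho(x)/r + K/r + o(1/r)$, which is $\lesssim \max\{\rho(x),1\}/r$; the ``max with $1$'' is there only to absorb the second and third contributions when $\rho(x)$ is small. No step seems especially delicate; the only piece of care is to make sure the appeals to Lemmas \ref{lem: Green function} and \ref{lem: big jump} are uniform in $r$ and not merely asymptotic as $r \to \infty$. This is routine, since the trivial bound $\Pr \le 1$ can be absorbed by $\max\{\rho(x),1\}/r$ on any bounded range of $r$ at the cost of enlarging the constant.
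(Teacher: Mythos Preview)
Your proposal is correct and follows essentially the same route as the paper. The paper also decomposes into three pieces: it sets $\Ee_1=\{\exists\,t<\sigma_r:\rho(X_t)\in(-r/2,0)\}$ and $\Ee_2=\{\tau_{(-\infty,0]}<\tau_{(r,\infty)}\}$, bounds $\Pr_x[\Ee_2^c]$ by Lemma~\ref{lem: Green function} (your first term), bounds $\Pr_x[\Ee_2\cap\Ee_1^c]$ by the big-jump Lemma~\ref{lem: big jump} (your third term), and bounds $\Pr_x[|c(X_{\sigma_r})|<3,\Ee_1]$ via the strong Markov property and Proposition~\ref{prop: upper bound for negative rho} (your middle term). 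Your formulation in terms of the stopping time $\tau$ and the landing value $\rho(X_\tau)$ is a cosmetic repackaging of the same three events; no substantive difference.
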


\begin{proof}
Let
$$ \Ee_1 = \set{ \exists t < \sigma_r \ , \  \rho(X_t) \in (-r/2,0)}
\qquad \textrm{and} \qquad  \Ee_2 = \set{  \tau_{(-\infty, 0]} < \tau_{(r,\infty)} }  .  $$
We have:
\begin{equation}\label{eq:c_small_E_1_E_2}
\Pr_x [ |c(X_{\sigma_r}) | < 3   ]  \le \Pr_x[ \Ee_2 \cap \Ee_1^c] + \Pr_x[  |c(X_{\sigma_r}) | < 3  \ , \  \Ee_1] +
\Pr_x [ \Ee_2^c]
\end{equation}

Note that because $\rho(X) > -\frac{r}{2}$ the event $\Ee_2 \cap \Ee_1^c$ implies that the random walk $(\rho(X_t))_t$ jumps across the interval $(-\frac{r}{2},0)$. So by Lemma \ref{lem: big jump}
\begin{align}
\label{eq:E_2_E1}
\Pr_x [  \Ee_2 \cap \Ee_1^c] & \le \Pr_x \big[ \exists \ t \leq \sigma_r  \ : \ |\rho(S_t)| > \tfrac{r}{2} \big] 
\leq C e^{- \eps r} ,
\end{align}
for some constants $C,\eps$.

By the strong Markov property and Proposition \ref{prop: upper bound for negative rho}, there exists $C>0$ such that
\begin{equation}\label{eq:c_Small_E_1}
\Pr_x [ |c(X_{\sigma_r}) | < 3  \ , \  \Ee_1]  \le \sup_{-r/2 < \rho(y) < 0 } \Pr_y [ |c(X_{\sigma_r})| < 3 ] \le \frac{C}{r} .
\end{equation}

By Lemma \ref{lem: Green function} there exists $C >0$ so that
\begin{equation}\label{eq:E_2_c}
\Pr_x [  \Ee_2^c]  \le \frac{\rho(x)  + C}{r}.
\end{equation}

The proof follows by applying  the bounds in \eqref{eq:E_2_E1}, \eqref{eq:c_Small_E_1} and \eqref{eq:E_2_c}
on the right hand side of  \eqref{eq:c_small_E_1_E_2}.
\end{proof}

\begin{prop}\label{prop:f_well_defined_harmonic}
There exists  an increasing sequence of integers $(r_k)_k$ for which the limit in \eqref{eq:PLHF_G} exists for all $x \in \mathbb{G}$. For such a sequence the function $f$ defined by the limit in \eqref{eq:PLHF_G} satisfies $f \in \HF_1(\GG,\mu)$. That is $f$ is $\mu$-harmonic and there exists some constant $C>0$
so that $|f(x)| \le C(|x|+1)$ for all $x \in \GG$.
\end{prop}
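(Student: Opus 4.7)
The plan is to build $(r_k)_k$ by a diagonal argument using the uniform bound from Proposition \ref{prop: upper bound general}, then read off linear growth directly from that bound, and finally deduce harmonicity by conditioning on the first step together with a dominated convergence argument that exploits the smoothness of $\mu$.

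For the existence of $(r_k)_k$, set $g_r(x) := r \Pr_x[|c(X_{\sigma_r})| < 3]$. By Lemma \ref{lem:alpha_distance} we have $|\rho(x)| \le K|x|$, so Proposition \ref{prop: upper bound general} shows that whenever $r > 2K|x|$,
\[
0 \le g_r(x) \le C \max\set{\rho(x),1} \le C(K|x|+1).
\]
Thus for each $x \in \GG$ the sequence $r \mapsto g_r(x)$ is bounded. Since $\GG$ is countable, a standard Cantor diagonal extraction produces a single increasing integer sequence $(r_k)_k$ along which $g_{r_k}(x)$ converges for every $x \in \GG$. Define $f(x)$ as this limit. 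The same inequality yields $|f(x)| \le C(K|x|+1)$, which establishes linear growth, i.e.\ $\|f\|_1 < \infty$.

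It remains to prove harmonicity. Conditioning on $S_1 = s$ gives
\[
g_r(x) = \sum_{s \in \GG} \mu(s) g_r(xs) \qquad \text{whenever } |\rho(x)| < r,
\]
because in that case $\sigma_r \ge 1$ under $\Pr_x$, and after one step $(X_t)_{t \ge 1}$ starts afresh at $xs$. For fixed $x$ this condition is met for all sufficiently large $k$. Hence it suffices to justify passing the limit through the sum via dominated convergence. I claim there exists $C' = C'(x)$ such that for every $s \in \GG$ and every $k$,
\[
g_{r_k}(xs) \le C'(|s|+1).
\]
Split into two cases. If $r_k > 2|\rho(xs)|$, Proposition \ref{prop: upper bound general} applied at $xs$ combined with Lemma \ref{lem:alpha_distance} yields $g_{r_k}(xs) \le C(K(|x|+|s|)+1)$. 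Otherwise $r_k \le 2|\rho(xs)| \le 2K(|x|+|s|)$, and then the trivial estimate $g_{r_k}(xs) \le r_k$ gives $g_{r_k}(xs) \le 2K(|x|+|s|)$. In either case the claim holds with a constant depending only on $|x|$, $K$ and $C$.

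Because $\mu$ is smooth, $\sum_s \mu(s)(|s|+1) < \infty$ (the generating function $\vphi_\mu$ has positive radius of convergence, so all moments of $|S_1|$ are finite). Dominated convergence therefore allows us to conclude
\[
f(x) = \lim_{k \to \infty} g_{r_k}(x) = \lim_{k \to \infty} \sum_{s} \mu(s) g_{r_k}(xs) = \sum_{s} \mu(s) f(xs),
\]
so $f$ is $\mu$-harmonic. Combined with the bound $|f(x)| \le C(K|x|+1)$, this shows $f \in \HF_1(\GG,\mu)$. The only non-routine step is producing the $s$-uniform domination above; the case split by comparing $r_k$ with $|\rho(xs)|$ is precisely what is needed so that Proposition \ref{prop: upper bound general} can be combined with the trivial bound to yield an integrable majorant independent of $k$.
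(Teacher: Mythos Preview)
Your proof is correct and follows essentially the same route as the paper: bound $r\,\Pr_x[|c(X_{\sigma_r})|<3]$ via Proposition \ref{prop: upper bound general}, extract a convergent subsequence by a diagonal argument (the paper invokes Arzel\`a--Ascoli, which here amounts to the same thing), read off linear growth, and condition on the first step for harmonicity. Your treatment is in fact more careful than the paper's at the harmonicity step: the paper simply writes the interchange of limit and sum without justification, whereas your case split on $r_k$ versus $2|\rho(xs)|$ produces a $k$-independent, $\mu$-integrable majorant $C'(|s|+1)$ and makes the dominated convergence explicit.
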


\begin{proof}
For $r \in \N$ let $f_r:\GG \to \R_+$ be given by
$$f_r(x) = r \cdot \Pr_{x} [ |c(X_{\sigma_r}) | < 3 ] \cdot \1{r > 2 |\rho(x)| }.$$

By Proposition \ref{prop: upper bound general}, $\sup_{r} f_r(x) < \infty$ for all $x \in \GG$, so by Arzel\`{a}-Ascoli  there exists a subsequence $(r_k)$ along which there is pointwise convergence. Let $f$ be this subsequential limit.

To see that $f$ is $\mu$-harmonic, note for $r>2|\rho(x)|$ we have 
$$ \Pr_x [ |c(X_{\sigma_r})| < 3 ] = \sum_s \mu(s) \Pr_{xs} [ |c(X_{\sigma_r})| < 3 ] . $$
Thus
$$ f(x) = \lim_{k \to \infty} r_k \cdot \Pr_x [ |c(X_{\sigma_{r_k}})| < 3 ] =
\sum_s \mu(s) \lim_{k \to \infty} r_k \cdot \Pr_{xs} [ |c(X_{\sigma_{r_k} })| < 3 ] = \sum_s \mu(s) f(xs) . $$

It remains to observe that by Proposition \ref{prop: upper bound general} and Lemma \ref{lem:alpha_distance},
$$\Pr_x [ |c(X_{\sigma_r}) | < 3 ]  \leq  \frac{C \max \set{ \rho(x) , 1 }  }{r} \leq \frac{C'(|x|+1)}{r} . $$
Multiplying by $r$ and taking  limits this proves that $f(x) \le C'(|x|+1)$.
\end{proof}

\subsection{Non-constant}

\begin{prop}
\label{prop: lower bound good}
There exist constants $C,\epsilon>0$ depending only on $\GG$ and $\mu$, such that for any $x \in \GG$ and $r>0$
with $|c(x) | < 1$ and $C < \rho(x) < r$,
$$ \Pr_x [ |c(X_{\sigma_r}) | < 3 ] \geq \frac{\epsilon \rho(x)-C}{r} . $$
\end{prop}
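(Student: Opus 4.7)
\medskip

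\noindent\textbf{Proof proposal.}
The plan is to combine Lemma \ref{lem: alpha difference} with the gambler's ruin estimate of Lemma \ref{lem: Green function}.

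Let $B := \{ \tau_{(r,\infty)} < \tau_{(-\infty,0)} \}$ be the event that the symmetric martingale $(\rho(X_t))_t$ exits the interval $[0,r]$ from the top; note that on $B$ one has $\sigma_r = \tau_{(r,\infty)}$.  First, I would apply Lemma \ref{lem: alpha difference} to get
$$ \Pr_x\bigl[ |c(X_{\sigma_r}) - c(x)| > 2 \ , \ B \bigr] \leq K/r. $$
Combined with the hypothesis $|c(x)| < 1$ and the triangle inequality (so that $|c(X_{\sigma_r}) - c(x)| \leq 2$ implies $|c(X_{\sigma_r})| < 3$), this yields
$$ \Pr_x\bigl[|c(X_{\sigma_r})| < 3\bigr] \geq \Pr_x[B] - K/r. $$

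Next, I would estimate $\Pr_x[B]$ by a translation reduction to Lemma \ref{lem: Green function}.  Setting $Y'_t := \rho(X_t) - r/2$, the process $(Y'_t)_t$ is a symmetric martingale with the same increments as $(\rho(X_t))_t$ starting at $Y'_0 = \rho(x) - r/2 \in (-r/2,r/2)$, and $B$ is exactly the event that $Y'$ exits $[-r/2,r/2]$ from the top.  Lemma \ref{lem: Green function} applied with half-length $r/2$ therefore gives
$$ \Pr_x[B] = \frac{\rho(x)}{r}\bigl(1 + O(r^{-\delta})\bigr). $$

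Combining the two estimates, for $r$ large enough that the $O(r^{-\delta})$ factor is bounded below by $1/2$, I obtain
$$ \Pr_x[|c(X_{\sigma_r})| < 3] \geq \frac{\rho(x)}{2r} - \frac{K}{r} = \frac{\tfrac{1}{2}\rho(x) - K}{r}, $$
which is the desired bound with $\epsilon = 1/2$ and an absorbed constant.  For the remaining regime of small $r$ (where $\rho(x) < r$ is itself bounded), the hypothesis $\rho(x) > C$ together with a sufficiently large choice of $C$ makes $\tfrac{1}{2}\rho(x) - C \leq 0$, rendering the target inequality trivially true.

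This argument is essentially a bookkeeping combination of two previously established inputs; the substantive probabilistic content already lives in Lemmas \ref{lem: alpha difference} and \ref{lem: Green function}.  The only mildly delicate point is the reduction of Lemma \ref{lem: Green function} from the symmetric interval $[-r,r]$ to the interval $[0,r]$, which is immediate once one notes that the increments of $(\rho(X_t))_t$ are translation invariant, so the symmetric version transfers verbatim to the shifted martingale.
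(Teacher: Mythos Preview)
Your proposal is correct and follows essentially the same route as the paper: both arguments intersect with the event $B=\{\tau_{(r,\infty)}<\tau_{(-\infty,0)}\}$, use Lemma~\ref{lem: alpha difference} to bound $\Pr_x[|c(X_{\sigma_r})-c(x)|>2,\,B]\le K/r$, and then invoke Lemma~\ref{lem: Green function} to get $\Pr_x[B]\ge (\epsilon\rho(x)-C)/r$. The only difference is cosmetic: you spell out the translation by $r/2$ and the small-$r$ regime explicitly, whereas the paper absorbs both into the constants in a single line.
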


\begin{proof}
If  $|c(x) | < 1$ then
$$ \Pr_x [  |c(X_{\sigma_r}) | < 3 ] \geq \Pr_x [ |c(X_{\sigma_r}) - c(x) | \leq  2 ].$$
Let $B = \set{ \tau_{(r,\infty)} < \tau_{(-\infty,0]} }$.
We have
$$ \Pr_x [ |c(X_{\sigma_r}) - c(x) | \leq  2 ] \geq \Pr_x [ |c(X_{\sigma_r}) - c(x) | \leq  2 \ , \ B].$$
By Lemmas \ref{lem: Green function} and \ref{lem: alpha difference},
\begin{align*}
\Pr_x [ |c(X_{\sigma_r}) - c(x) | \leq 2 \ , \ B ] & \geq \Pr_x [ B ] - \frac{C}{r}
\geq \frac{\epsilon \rho(x) - C}{r} .
\end{align*}
\end{proof}

\begin{prop}\label{prop:f_non_constant}
The function  $f$ defined in \eqref{eq:PLHF_G} is non-constant.
\end{prop}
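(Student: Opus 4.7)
The plan is to show $f$ is unbounded, which of course implies it is non-constant. The idea is to evaluate $f$ along powers of the special element used in the reduction.

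Recall from the setup that we have reduced to the case where $\GG$ contains an element of the form $x_0 = \clamt{0}{\lambda}$ with $|\lambda| > 1$. Then $x_0^{-n} = \clamt{0}{\lambda^{-n}}$, so setting $y_n = x_0^{-n}$ we have $c(y_n) = 0$ and $\rho(y_n) = n \log |\lambda| \to \infty$. These are our test points: they satisfy $|c(y_n)| < 1$ and have arbitrarily large $\rho$.

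Now I would apply Proposition \ref{prop: lower bound good}. Once $n$ is large enough that $\rho(y_n) > C$, then for every $r > \rho(y_n)$ we get
\[
r \cdot \Pr_{y_n}[ |c(X_{\sigma_r})| < 3 ] \geq \epsilon \rho(y_n) - C.
\]
Taking $r = r_k \to \infty$ along the subsequence from \eqref{eq:PLHF_G} yields
\[
f(y_n) \geq \epsilon \rho(y_n) - C = \epsilon n \log|\lambda| - C \xrightarrow{n \to \infty} \infty.
\]
On the other hand, applying Proposition \ref{prop: upper bound for negative rho} at $x = 1$ (where $\rho(1) = 0$, so the hypothesis $-r/2 < \rho(x) \leq 0$ is satisfied for every $r > 0$) gives $r \cdot \Pr_1[|c(X_{\sigma_r})| < 3] \leq K$ for every $r$, hence $f(1) \leq K$. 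Comparing $f(1) \leq K$ with $f(y_n) \to \infty$ shows $f$ is non-constant.

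There is no real obstacle here: the content of the argument is entirely in Propositions \ref{prop: lower bound good} and \ref{prop: upper bound for negative rho}, together with the existence (guaranteed by the reductions preceding Lemma \ref{lem:alpha_distance}) of a diagonal element $\clamt{0}{\lambda}$ with $|\lambda|>1$, whose inverse powers supply a sequence with bounded $c$-coordinate and diverging $\rho$-coordinate.
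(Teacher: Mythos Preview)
Your proof is correct and follows essentially the same approach as the paper: take inverse powers of the diagonal element $\clamt{0}{\lambda}$ with $|\lambda|>1$ and apply Proposition~\ref{prop: lower bound good} to conclude $f(x^{-n})\to\infty$, hence $f$ is unbounded and non-constant. Your additional observation that $f(1)\le K$ via Proposition~\ref{prop: upper bound for negative rho} is correct but unnecessary, since unboundedness alone already gives non-constancy.
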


\begin{proof}
By one of our assumptions, there exists $x = \clamt{0}{\lambda} \in \GG$ with $|\lambda|>1$,
so $x^{-n} = \clamt{0}{\lambda^{-n}}$.
By Proposition \ref{prop: lower bound good},
$$\lim_{n \to \infty} f(x^{-n}) = +\infty,$$
 so  the function $f$ is unbounded and in particular non-constant.
\end{proof}

\subsection{Infinite dimensional orbit}

\begin{prop}
\label{prop: upper bound for orbit}
There exists a constant $C>0$ such that for all $x \in \GG$ with $|c(x)| > 5$,
and all $r >4 |\rho(x)|$,
$$ \Pr_x [ |c(X_{\sigma_r}) | < 3 ] \leq \frac{C}{r} . $$
\end{prop}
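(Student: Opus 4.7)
The plan is to split the analysis by the sign of $\rho(x)$. The case $\rho(x) \le 0$ is immediate: since $r > 4|\rho(x)| > 2|\rho(x)|$ and $\max\{\rho(x),1\} = 1$, Proposition \ref{prop: upper bound general} already yields $\Pr_x[|c(X_{\sigma_r})| < 3] \le C/r$, and the hypothesis $|c(x)| > 5$ is not needed. So assume $\rho(x) > 0$, which combined with $r > 4|\rho(x)|$ puts $\rho(x) \in (0, r/4)$.

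The key observation is that if $|c(x)| > 5$ and $|c(X_{\sigma_r})| < 3$, then $|c(X_{\sigma_r}) - c(x)| > 2$. Introduce the events
\[
B = \{ \tau_{(r,\infty)} < \tau_{(-\infty,0)} \}, \qquad B_0 = \{ \max_{1 \le t \le \sigma_r} |\rho(S_t)| \le r/4 \}.
\]
On $B$ the walk $(\rho(X_t))_t$ exits $[-r,r]$ from the right without dipping below $0$, so Lemma \ref{lem: alpha difference} applies and gives
\[
\Pr_x[|c(X_{\sigma_r})| < 3,\ B] \le \Pr_x[|c(X_{\sigma_r}) - c(x)| > 2,\ B] \le K/r.
\]

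On $B^c \cap B_0$, let $\tau_0 := \tau_{(-\infty,0)}$. Because jumps are bounded by $r/4$ and $\rho(X_{\tau_0 - 1}) \ge 0$, we have $\rho(X_{\tau_0}) \in (-r/4, 0)$; in particular $\tau_0 < \sigma_r$ and $X_{\tau_0} \in [-r,r]$. Applying the strong Markov property at $\tau_0$ together with Proposition \ref{prop: upper bound for negative rho} (whose hypothesis $-r/2 < \rho(X_{\tau_0}) \le 0$ is satisfied) we get
\[
\Pr_x[|c(X_{\sigma_r})| < 3,\ B^c \cap B_0] \le \E_x\!\left[\1{B^c \cap B_0} \cdot \Pr_{X_{\tau_0}}[|c(X_{\sigma_r})| < 3]\right] \le K/r.
\]
Finally, Lemma \ref{lem: big jump} gives $\Pr_x[B_0^c] \le C r^3 e^{-\epsilon r/4} + e^{-r}$, which is $O(1/r)$ (indeed it decays super-polynomially). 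Summing the three contributions yields $\Pr_x[|c(X_{\sigma_r})| < 3] \le C'/r$ for large $r$, and for small $r$ the bound is trivial upon enlarging the constant.

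I do not expect any major obstacle here: all the ingredients are in place from the previous lemmas and propositions, and the argument is essentially a careful case split with one strong-Markov reduction. The mildly delicate point is making sure that on $B_0$ the dipping point $X_{\tau_0}$ lies strictly inside $(-r/2, 0]$ so that Proposition \ref{prop: upper bound for negative rho} may be invoked; this is handled by the bound $r/4$ on jumps.
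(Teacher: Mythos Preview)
Your argument is correct and follows essentially the same route as the paper: split on the event $B=\{\tau_{(r,\infty)}<\tau_{(-\infty,0]}\}$, handle $B$ via Lemma~\ref{lem: alpha difference}, and on $B^c$ use the strong Markov property together with Proposition~\ref{prop: upper bound for negative rho} once the walk has dipped into a safe window near $0$, the remaining ``big jump'' event being controlled by Lemma~\ref{lem: big jump}. Your shortcut of invoking Proposition~\ref{prop: upper bound general} directly when $\rho(x)\le 0$ is a nice simplification. One small technical point to tidy: the event $B_0$ depends on jumps up to time $\sigma_r$, hence is not $\mathcal{F}_{\tau_0}$-measurable, so the displayed strong-Markov inequality is not literally valid; replace $B_0$ by the $\mathcal{F}_{\tau_0}$-measurable event $\{\max_{1\le t\le \tau_0}|\rho(S_t)|\le r/4\}$ (or simply $\{\rho(X_{\tau_0})\in(-r/4,0)\}$), which contains $B^c\cap B_0$ and suffices for the bound.
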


\begin{proof}
Let $B = \set{ \tau_{(r,\infty)} < \tau_{(-\infty,0]} }$.
Let $\Ee = \set{ \exists \ t < \sigma_r \ : \ \rho(X_t) \in (-r/2,0] }$.
By the strong Markov property
\begin{align*}
\Pr_x [ |c(X_{\sigma_r})| < 3 \ , \ B^c ] & \leq \Pr_x[ \Ee^c \ , \ B^c]
+ \sup_{y \ : \ \rho(y) \in (-r/2 , 0] } \Pr_y [ |c(X_{\sigma_r}) | < 3 ]
\end{align*}
By Proposition \ref{prop: upper bound for negative rho} the second term on the right hand side is bounded by
$\frac{C}{r}$.
Using Lemma  \ref{lem: big jump} (as in the proof of Proposition \ref{prop: upper bound general}), there are constants
$C,\eps >0$ such that $\Pr_x [ \Ee^c \ , \ B^c ] \leq C e^{-\eps r}$.

If $\rho(x) \leq 0$ then $\Pr_x[B] = 0$, so
$\Pr_x [ |c(X_{\sigma_r}) | < 3 \ , \ B ] = 0$.
If $\rho(x) > 0$ and  $|c(x)| > 5$  then
Lemma \ref{lem: alpha difference} tells us that for some $C>0$
$$ \Pr_x [ |c(X_{\sigma_r}) | < 3 \ , \ B ] \leq \Pr_x [ |c(X_{\sigma_r})-c(x)| > 2 \ , \ B ]  \leq \frac{C}{r} . $$
Altogether, for any $x \in \GG$ with $|c(x)|>5$,
$$ \Pr_x [ |c(X_{\sigma_r}) | < 3 ] = \Pr_x [  |c(X_{\sigma_r}) | < 3 \ , \ B ]
+ \Pr_x [  |c(X_{\sigma_r}) | < 3 \ , \ B^c ] \leq \frac{C}{r} .  $$
for some constant $C>0$.
\end{proof}

\begin{prop} \label{prop: positive HFs}
Let $f:\GG \to [0,\infty)$ be the function given in \eqref{eq:PLHF_G}.
There exist $(y_n)_n \subset \GG$ such that the family
$(f_n := y_n f)_n$ are infinitely many linearly independent functions.

Specifically, $\dim {\spn(\GG f)} = \infty$.
\end{prop}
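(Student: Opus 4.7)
The plan is to produce an infinite sequence of group elements $y_n \in \GG$ and, for each $N$, to exhibit test points at which the matrix of values of $y_1 f,\dots,y_N f$ is diagonally dominant, hence invertible; this will force linear independence.

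Using our standing assumptions, fix $x = \clamt{0}{\lambda} \in \GG$ with $|\lambda| > 1$ and $z = \clamt{c(z)}{1} \in \GG$ with $c(z) \ne 0$, and for $n \ge 1$ set
$$ y_n \;:=\; x^{n+N_0}\, z\, x^{-(n+N_0)} \;=\; \clamt{\lambda^{n+N_0} c(z)}{1} \;\in\; \GG, $$
where $N_0$ is a positive integer to be chosen. Since $c(y_k) - c(y_j) = \lambda^{N_0}(\lambda^k - \lambda^j)c(z)$, and since $|\lambda^k - \lambda^j| \ge |\lambda|-1 > 0$ for $k\ne j$ (by the reverse triangle inequality in the archimedean case, or by the ultrametric identity $|\lambda^k-\lambda^j| = |\lambda|^{\max(k,j)}$ in the non-archimedean case), one may fix $N_0$ so large that $|c(y_k) - c(y_j)| > 5$ for every pair of distinct indices $k, j \ge 1$.

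Now fix any $N \ge 1$. Pick a large integer $M$ (depending on $N$), set $v := x^{-M}$, and let $w_k := y_k v \in \GG$ for $k = 1,\dots,N$. A direct computation gives
$$ y_j^{-1} w_k \;=\; y_j^{-1}\, y_k\, v \;=\; \clamt{\lambda^{N_0}(\lambda^k - \lambda^j)\,c(z)}{\lambda^{-M}}, $$
so that $\rho(y_j^{-1} w_k) = M \log|\lambda|$, while $c(y_j^{-1} w_k)$ vanishes when $j = k$ and has absolute value exceeding $5$ when $j \ne k$. Applying Proposition \ref{prop: lower bound good} to $v$ (valid once $M\log|\lambda|$ exceeds the constant $C$ there), and taking the limit along the defining sequence $(r_k)$ of $f$, yields
$$ (y_k f)(w_k) \;=\; f(v) \;\ge\; \epsilon\, M \log|\lambda| - C, $$
while Proposition \ref{prop: upper bound for orbit} applied to $y_j^{-1} w_k$ for $j \ne k$ gives
$$ \bigl| (y_j f)(w_k) \bigr| \;=\; f(y_j^{-1} w_k) \;\le\; C', $$
for a constant $C'$ independent of $M$, $j$, and $k$.

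Choose $M$ large enough that $\epsilon M\log|\lambda| - C > (N-1)\,C'$. Then the $N\times N$ matrix $\bigl((y_j f)(w_k)\bigr)_{k,j=1}^N$ is strictly row-diagonally dominant and thus invertible. Consequently, any linear relation $\sum_{j=1}^N a_j (y_j f) \equiv 0$, evaluated at $w_1,\dots,w_N$, forces $\mathbf{a} = 0$. Since $N$ was arbitrary, $(y_n f)_{n\ge 1}$ is linearly independent in $\spn(\GG f)$, so $\dim \spn(\GG f) = \infty$. The main obstacle is producing translates of $f$ that remain uniformly well-separated in their $c$-coordinate: the conjugation by $x^{N_0}$ is precisely the device needed to guarantee $|c(y_k) - c(y_j)| > 5$ uniformly, which in turn is what allows the $|c(x)| > 5$ hypothesis of Proposition \ref{prop: upper bound for orbit} to bound the off-diagonal entries; the naive choice $y_n = z^n$ fails in the ultrametric setting, where $|n\, c(z)| \le |c(z)|$.
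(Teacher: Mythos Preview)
Your proof is correct and follows essentially the same approach as the paper: both use conjugates $y_n = x^{m(n)} z x^{-m(n)}$ of the element $z$ by powers of $x$, evaluate the translates $y_j f$ at test points of the form $y_k x^{-M}$, and combine Proposition~\ref{prop: lower bound good} (diagonal entries blow up) with Proposition~\ref{prop: upper bound for orbit} (off-diagonal entries stay bounded, since $|c(y_j^{-1}y_k)|>5$) to force linear independence. The only differences are cosmetic: the paper takes $m(n)=Nn$ and argues by letting $j\to\infty$ to make a nontrivial combination unbounded, whereas you take $m(n)=n+N_0$ and fix a finite $M$ large enough for strict diagonal dominance; your explicit split into the archimedean and ultrametric cases, and the remark on why $y_n=z^n$ fails ultrametrically, are nice touches not spelled out in the paper.
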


\begin{proof}
By our assumptions on $\GG$, we have elements $z = \clamt{c}{1} \in \GG$ with $c \ne 0$,
and $x= \clamt{0}{\lambda} \in \GG$ with $|\lambda| >1$.
Choose $N$ large enough so that
$$|\lambda^N|\cdot (|\lambda^N|-1|)\cdot |c| > 5.$$

Let
$$y_n = x^{Nn}zx^{-Nn}= \clamt{\lambda^{Nn}c}{1} \mbox{ and } f_n = y_nf.$$

A simple calculation shows that for $m,n,j \in \mathbb{N}$:
$$f_n(y_mx^{-j})= f(y^{-n}y^mx^{-j})= f\left(\clamt{(\lambda^{Nm}-\lambda^{Nn})c}{\lambda^{-j}}\right)$$
If $1 \leq n  <  m$ then
$$\left|(\lambda^{Nm}-\lambda^{Nn})c \right| \geq  |\lambda^{Nn}|\cdot \big| |\lambda^{N(m-n)}| - 1\big| \cdot|c|  >  5 . $$
Using the symmetry between $n,m$, by Proposition \ref{prop: upper bound for orbit},
there exists a constant $C>0$ such that for any $n \neq m$,
we have $|f_n(y_m x^{-j}) | \leq C$ for any $j \in \mathbb{N}$.
On the other hand, by  Proposition \ref{prop: lower bound good} we have
$$\lim_{j \to \infty} f_n(y_nx^{-j}) = + \infty.$$
It follows that for if $\alpha_1,\ldots,\alpha_m \in \C$ and $\alpha_m \ne 0$ then
\begin{align*}
\left| \sum_{n=1}^m \alpha_n f_n(y_mx^{-j}) \right| & \geq |\alpha_m| \cdot |f_m(y_mx^{-j})|
- \sum_{n=1}^{m-1}|\alpha_n|\cdot  |f_n(y_mx^{-j})| \\
& \geq |\alpha_m| \cdot f_m(y_m x^{-j} ) - \sum_{n=1}^{m-1} |\alpha_n| \cdot C \to \infty \qquad \textrm{ as } j \to \infty.
\end{align*}
We conclude that the functions $(f_n)_n$ are indeed linearly independent.
\end{proof}


\begin{proof}[Proof of Theorem \ref{thm:2x2}]
Theorem \ref{thm:2x2} follows directly from the combination of Propositions \ref{prop:f_well_defined_harmonic},
\ref{prop:f_non_constant} and \ref{prop: positive HFs}.
\end{proof}


\begin{thebibliography}{10}

\bibitem{Alex02}
G.~K. Alexopoulos.
\newblock Random walks on discrete groups of polynomial volume growth.
\newblock {\em Annals of Probability}, pages 723--801, 2002.

\bibitem{BBE}
M.~Babillot, P.~Bougerol, and L.~Elie.
\newblock On the strong {L}iouville property for co-compact {R}iemannian
  covers.
\newblock {\em Milan Journal of Mathematics}, 64(1):77--84, 1994.

\bibitem{bass_degree_growth_nilpotent}
H.~Bass.
\newblock The degree of polynomial growth of finitely generated nilpotent
  groups.
\newblock {\em Proc. London Math. Soc. (3)}, 25:603--614, 1972.

\bibitem{BDKY11}
I.~Benjamini, H.~Duminil-Copin, G.~Kozma, and A.~Yadin.
\newblock Disorder, entropy and harmonic functions.
\newblock {\em Annals of Probability, to appear}.
\newblock \href{http://arxiv.org/abs/1111.4853}{arXiv:1111.4853}.

\bibitem{BDKYprep}
I.~Benjamini, H.~Duminil-Copin, G.~Kozma, and A.~Yadin.
\newblock Harmonic functions on lamp-lighter groups.
\newblock {\em in preparation}.

\bibitem{elie_bougerol}
P.~Bougerol and L.~{\'E}lie.
\newblock Existence of positive harmonic functions on groups and on covering
  manifolds.
\newblock {\em Ann. Inst. H. Poincar\'e Probab. Statist.}, 31(1):59--80, 1995.

\bibitem{Breuillard06}
E.~Breuillard.
\newblock On uniform exponential growth for solvable groups.
\newblock {\em Pure Appl. Math. Q.}, 3(4, part 1):949--967, 2007.

\bibitem{glander_breuillard_topological_tits}
E.~Breuillard and T.~Gelander.
\newblock A topological {T}its alternative.
\newblock {\em Ann. of Math. (2)}, 166(2):427--474, 2007.

\bibitem{colding_minicozzi}
T.~H. Colding and W.~P. Minicozzi.
\newblock Harmonic functions on manifolds.
\newblock {\em Annals of Mathematics}, pages 725--747, 1997.

\bibitem{de_la_harpe_geom_group_book}
P.~de~la Harpe.
\newblock {\em Topics in geometric group theory}.
\newblock Chicago Lectures in Mathematics. University of Chicago Press,
  Chicago, IL, 2000.

\bibitem{Durrett}
R.~Durrett.
\newblock {\em Probability: theory and examples}.
\newblock Cambridge university press, 2010.

\bibitem{MR2827814}
A.~Erschler.
\newblock Poisson-{F}urstenberg boundaries, large-scale geometry and growth of
  groups.
\newblock In {\em Proceedings of the {I}nternational {C}ongress of
  {M}athematicians. {V}olume {II}}, pages 681--704. Hindustan Book Agency, New
  Delhi, 2010.

\bibitem{Furman02}
A.~Furman.
\newblock Random walks on groups and random transformations.
\newblock {\em Handbook of dynamical systems}, 1:931--1014, 2002.

\bibitem{gromov_81}
M.~Gromov.
\newblock Groups of polynomial growth and expanding maps.
\newblock {\em Publications Math{\'e}matiques de l'IH{\'E}S}, 53(1):53--78,
  1981.

\bibitem{Groves78}
J.~R.~J. Groves.
\newblock Soluble groups with every proper quotient polycyclic.
\newblock {\em Illinois Journal of Mathematics}, 22(1):90--95, 1978.

\bibitem{guivarch_degree_growth_nilpotent}
Y.~Guivarc'h.
\newblock Groupes de {L}ie \`a croissance polynomiale.
\newblock {\em C. R. Acad. Sci. Paris S\'er. A-B}, 271:A237--A239, 1970.

\bibitem{HLT14}
Y.~Hartman, Y.~Lima, and O.~Tamuz.
\newblock An {A}bramov formula for stationary spaces of discrete groups.
\newblock {\em Ergodic Theory and Dynamical Systems}, 34(03):837--853, 2014.

\bibitem{HS-C93}
W.~Hebisch and L.~Saloff-Coste.
\newblock Gaussian estimates for markov chains and random walks on groups.
\newblock {\em The Annals of Probability}, pages 673--709, 1993.

\bibitem{kaimanovich_vershik_1983}
V.~A. Ka{\u\i}manovich and A.~M. Vershik.
\newblock Random walks on discrete groups: boundary and entropy.
\newblock {\em Ann. Probab.}, 11(3):457--490, 1983.

\bibitem{kleiner_2010}
B.~Kleiner.
\newblock A new proof of {G}romov's theorem on groups of polynomial growth.
\newblock {\em J. Amer. Math. Soc.}, 23(3):815--829, 2010.

\bibitem{MR2466937}
D.~A. Levin, Y.~Peres, and E.~L. Wilmer.
\newblock {\em Markov chains and mixing times}.
\newblock American Mathematical Society, Providence, RI, 2009.
\newblock With a chapter by James G. Propp and David B. Wilson.

\bibitem{arXiv:1505.01175}
T.~Meyerovitch, I.~Perl, M.~Tointon, and A.~Yadin.
\newblock Polynomials and harmonic functions on discrete groups.
\newblock 2015.
\newblock \href{http://arxiv.org/abs/arXiv:1505.01175}{arXiv:1505.01175}.

\bibitem{pier_amenable}
J.-P. Pier.
\newblock {\em Amenable locally compact groups}.
\newblock Pure and Applied Mathematics (New York). John Wiley \& Sons, Inc.,
  New York, 1984.
\newblock A Wiley-Interscience Publication.

\bibitem{rosenblatt_81}
J.~Rosenblatt.
\newblock Ergodic and mixing random walks on locally compact groups.
\newblock {\em Math. Ann.}, 257(1):31--42, 1981.

\bibitem{Sawyer97}
S.~A. Sawyer.
\newblock Martin boundaries and random walks.
\newblock {\em Contemporary Mathematics}, 206:17--44, 1997.

\bibitem{shalom_tao_finitary_gromov}
Y.~Shalom and T.~Tao.
\newblock A finitary version of {G}romov's polynomial growth theorem.
\newblock {\em Geom. Funct. Anal.}, 20(6):1502--1547, 2010.

\bibitem{tits_1972}
J.~Tits.
\newblock Free subgroups in linear groups.
\newblock {\em J. Algebra}, 20:250--270, 1972.

\bibitem{tointon_harmonic}
M.~Tointon.
\newblock Characterisations of algebraic properties of groups in terms of
  harmonic functions, 2014.

\end{thebibliography}
\end{document}